\newcommand{\BibTeX}{\textsc{B\kern-0.1emi\kern-0.017emb}\kern-0.15em\TeX}
\DeclareMathAlphabet\mathbfcal{OMS}{cmsy}{b}{n}
\newcommand{\nats}{\mathbb{N}}
\newcommand{\natswith}{\nats_{0}}
\newcommand{\reals}{\mathbb{R}}
\newcommand{\rationals}{\mathbb{Q}}
\newcommand{\rationalsnonneg}{\mathbb{Q}_{\geq0}}
\newcommand{\realspos}{\reals_{>0}}
\newcommand{\realsnonneg}{\reals_{\geq 0}}
\newcommand{\states}{\mathcal{X}}
\newcommand{\power}{\mathcal{P}(\hspace{-1.5pt}\states\hspace{-0.5pt})}
\newcommand{\poweron}[1]{\mathcal{P}(\hspace{-1.5pt}\states_{#1}\hspace{-0.5pt})}
\newcommand{\nonemptypower}{\mathcal{P}_{\emptyset}(\hspace{-1.5pt}\states\hspace{-0.5pt})}
\newcommand{\nonemptypoweron}[1]{\mathcal{P}_{\emptyset}(\hspace{-1.5pt}\states_{#1}\hspace{-0.5pt})}
\newcommand{\gambles}{\mathcal{G}(\hspace{-1.5pt}\states\hspace{-0.5pt})}
\newcommand{\gamblespos}{\mathcal{G}_{>0}(\hspace{-1.5pt}\states\hspace{-0.5pt})}
\newcommand{\gamblesnonneg}{\mathcal{G}_{\geq0}(\hspace{-1.5pt}\states\hspace{-0.5pt})}
\newcommand{\gambleson}[1]{\mathcal{G}(\hspace{-1.5pt}\states_{#1}\hspace{0.pt})}
\newcommand{\desir}{\mathcal{D}}
\newcommand{\ind}[1]{\mathbb{I}_{#1}}
\newcommand{\asa}{\Leftrightarrow}
\newcommand{\abs}[1]{\left\vert #1 \right\vert}
\newcommand{\coloneqq}{:\!=}
\newcommand{\lp}{\underline{P}}
\newcommand{\pr}{P}
\newcommand{\natexLP}{\underline{E}}
\newcommand{\natexUP}{\overline{E}}
\newcommand{\posi}{\mathrm{posi}}
\newcommand{\marg}{\mathrm{marg}}
\newcommand{\A}{\mathcal{A}}
\newcommand{\B}{\mathcal{B}}
\newcommand{\C}{\mathcal{C}}
\newcommand{\E}{\mathcal{E}}
\newcommand{\Y}{\mathcal{Y}}
\begin{document}

\title{Independent Natural Extension for Infinite Spaces:\\ Williams-Coherence to the Rescue}
\author{\name Jasper De Bock \email jasper.debock@ugent.be\\
\addr Ghent University - imec, IDLab, ELIS\\
Technologiepark -- Zwijnaarde 914, 9052 Zwijnaarde, Belgium}
\maketitle
\begin{abstract}
We define the independent natural extension of two local models for the general case of infinite spaces, using both sets of desirable gambles and conditional lower previsions. In contrast to~\cite{Miranda2015460}, we adopt Williams-coherence instead of Walley-coherence. We show that our notion of independent natural extension always exists---whereas theirs does not---and that it satisfies various convenient properties, including factorisation and external additivity.
\end{abstract}
\begin{keywords}
independent natural extension; epistemic independence; Williams-coherence; infinite spaces; external additivity; factorisation; sets of desirable gambles; conditional lower previsions.
\end{keywords}

\section{Introduction}\label{sec:introduction}

When probabilities are imprecise, in the sense that they are only partially specified, it is no longer clear what it means for two variables to be independent~\citep{Couso:1999wh}. One approach is to apply the standard notion of independence to every element of some set of probability measures. The alternative, called epistemic independence, is to define independence as mutual irrelevance, in the sense that receiving information about one of the variables will not effect our uncertainty model for the other. 
The advantage of this intuitive alternative is that it has a much wider scope: since epistemic independence is expressed in terms of uncertainty models instead of probabilities, it can easily be applied to a variety of such models, including non-probabilistic ones; we here consider sets of desirable gambles and conditional lower previsions.

When an assessment of epistemic independence is combined with local uncertainty models, it leads to a unique corresponding joint uncertainty model that is called the independent natural extension. If the variables involved can take only a finite number of values, this independent natural extension always exists, and it then satisfies various convenient properties that allow for the design of efficient algorithms~\citep{deCooman:2011ey,deCooman:2012vba}.
If the variables involved take values in an infinite set, the situation becomes more complicated. On the one hand, for the specific case of lower probabilities,~\cite{Vicig:2000vh} managed to obtain results that resemble the finite case. On the other hand, for the more general case of lower previsions,~\cite{Miranda2015460} recently found that the independent natural extension may not even exist. 

Our present contribution generalises the results of~\cite{Vicig:2000vh} to the case of conditional lower previsions, using sets of desirable gambles as an intermediate step. The key technical difference with~\cite{Miranda2015460} is that we use Williams-coherence instead of Walley-coherence. This difference turns out to be crucial because our notion of independent natural extension always exists. Furthermore, as we will see, it satisfies the same convenient properties that are known to hold in the finite case, including factorisation and external additivity.

\section{Preliminaries and Notation}\label{sec:prelim}

We use $\nats$ to denote the natural numbers without zero and let $\natswith\coloneqq\nats\cup\{0\}$. $\reals$ is the set of real numbers and $\rationals$ is the set of rational numbers. Sign restrictions are imposed with subscripts. For example, we let $\reals_{>0}$ be the set of positive real numbers and let $\rationals_{\geq0}$ be the set of non-negative rational numbers. The extended real numbers are denoted by $\overline{\reals}\coloneqq\reals\cup\{-\infty,+\infty\}$.

For any non-empty set $\states$, the power set of $\states$---the set of all subsets of $\states$---is denoted by $\power$, and we let $\nonemptypower\coloneqq\power\setminus\{\emptyset\}$ be the set of all non-empty subsets of $\states$. Elements of $\power$ are called events. A set of events $\B\subseteq\power$ is called a field if it is non-empty and closed with respect to complements and finite intersections and unions. If it is also closed with respect to countable intersections and unions, it is called a sigma field. A partition of $\states$ is a set $\B\subseteq\nonemptypower$ of pairwise disjoint non-empty subsets of $\states$ whose union is equal to $\states$. We also adopt the notational trick of identifying $\states$ with the set of atoms $\{\{x\}\colon x\in\states\}$, which allows us to regard $\states$ as a partition of $\states$.


A bounded real-valued function on $\states$ will be called a gamble on $\states$. The set of all gambles on $\states$ is denoted by $\gambles$, the set of all non-negative gambles on $\states$ is denoted by $\mathcal{G}_{\geq0}(\states)$, and we let $\gamblespos\coloneqq\mathcal{G}_{\geq0}(\states)\setminus\{0\}$ be the set of all non-negative non-zero gambles. For any set of gambles $\A\subseteq\gambles$, we let
\vspace{-2pt}
\begin{equation}\label{eq:posi}
\posi(\A)
\coloneqq
\left\{
\sum_{i=1}^n \lambda_if_i
\colon
n\in\nats,
\lambda_i\in\realspos,f_i\in\mathcal{A}
\right\}
\end{equation}
and
\begin{equation}\label{eq:natextop}
\E(\A)
\coloneqq
\posi\left(
\A\cup\gamblespos
\right).\vspace{10pt}
\end{equation}
Indicators are a particular type of gamble. For any $A\in\power$, the corresponding indicator $\ind{A}$ of $A$ is a gamble in $\gambles$, defined for all $x\in\states$ by $\ind{A}(x)\coloneqq1$ if $x\in A$ and $\ind{A}(x)\coloneqq0$ otherwise. 

Finally, for any $\B\subseteq\nonemptypower$, we will also require the notion of a non-negative $\B$-measurable gamble, which we define as a uniform limit of simple $\B$-measurable gambles.

\begin{definition}\label{def:measurable:simple}Let $\B\subseteq\nonemptypower$. We call $\smash{g\in\mathcal{G}_{\geq0}(\states)}$ a simple $\B$-measurable gamble if there are $c_0\in\reals_{\geq0}$, $n\in\natswith$ and, for all $i\in\{1,\dots,n\}$, $c_i\in\reals_{\geq0}$ and $B_i\in\B$, such that $g=c_0+\sum_{i=1}^nc_i\ind{B_i}$.
\end{definition}




\begin{definition}\label{def:measurable:uniform}Let $\B\subseteq\nonemptypower$. A gamble $g\in\mathcal{G}_{\geq0}(\states)$ is $\B$-measurable if it is a uniform limit of non-negative simple $\B$-measurable gambles, in the sense that there is a sequence $\{g_n\}_{n\in\nats}$ of simple $\B$-measurable gambles in $\mathcal{G}_{\geq0}(\states)$ such that $\lim_{n\to+\infty}\sup\abs{g-g_n}=0$.
\end{definition}

Readers that are familiar with the concepts of simple and measurable functions that are common in measure theory will observe some similarities. However, there are also some important differences. On the one hand, our definitions are more restrictive: we only consider bounded non-negative functions, Definition~\ref{def:measurable:simple} requires that the coefficients $c_i$ are non-negative, and Definition~\ref{def:measurable:uniform} considers uniform limits instead of pointwise limits. On the other hand, our definitions are more general because we allow for $\B$ to be any subset of $\nonemptypower$. Nevertheless, if $\B\cup\{\emptyset\}$ is a sigma field, we have the following equivalence.

\begin{proposition}\label{prop:measurability:equivalenceforsigmafield}
Consider any $\B\subseteq\nonemptypower$ such that $\B^*\coloneqq\B\cup\{\emptyset\}$ is a sigma field. Then for any $g\in\gamblesnonneg$, $g$ is $\B^*$-measurable in the measure-theoretic sense~\cite[Definition~10.1]{Nielsen1997} if and only if it is $\B$-measurable in the sense of Definition~\ref{def:measurable:uniform}.
\end{proposition}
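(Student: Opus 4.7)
The plan is to prove the two implications separately. For the forward direction---$\B$-measurability in the sense of Definition~\ref{def:measurable:uniform} implying measure-theoretic $\B^*$-measurability---I would first observe that every simple $\B$-measurable gamble $c_0+\sum_{i=1}^n c_i\ind{B_i}$ is $\B^*$-measurable in the measure-theoretic sense, because each $B_i\in\B\subseteq\B^*$ makes $\ind{B_i}$ measurable and finite linear combinations (together with adding a constant) preserve measurability. Since uniform convergence implies pointwise convergence, and the pointwise limit of measurable functions is measurable (a standard closure property of sigma fields), any uniform limit of simple $\B$-measurable gambles inherits measure-theoretic $\B^*$-measurability.

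The reverse direction is the substantive one. Given $g\in\gamblesnonneg$ that is measure-theoretically $\B^*$-measurable, I would construct an explicit uniformly convergent sequence of simple $\B$-measurable approximants via the usual dyadic level-set truncation. Since $g$ is bounded, for each $n\in\nats$ the preimages $A_k^n\coloneqq g^{-1}([k/n,(k+1)/n))$, $k\in\natswith$, partition $\states$, lie in $\B^*$ by measure-theoretic $\B^*$-measurability of $g$, and only finitely many are non-empty. Setting $K_n\coloneqq\{k\in\nats\colon A_k^n\neq\emptyset\}$, I let
\[
g_n\coloneqq\sum_{k\in K_n}\frac{k}{n}\ind{A_k^n},
\]
which is a simple $\B$-measurable gamble, since each $A_k^n$ with $k\in K_n$ is non-empty (hence in $\B$) and the coefficients $k/n$ are non-negative. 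A direct estimate gives $\sup_{x\in\states}\abs{g(x)-g_n(x)}\leq 1/n$, so $g_n\to g$ uniformly, yielding the required $\B$-measurability in the sense of Definition~\ref{def:measurable:uniform}.

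The main conceptual subtlety is the mild bookkeeping mismatch between $\B$ and $\B^*$: the measure-theoretic framework naturally produces level sets in $\B^*$, some of which may be empty, whereas Definition~\ref{def:measurable:simple} admits only non-empty sets from $\B=\B^*\setminus\{\emptyset\}$. This is dispensed with by simply discarding the empty level sets from the sum, as they contribute zero. Beyond that, both directions rest on standard material: closure of $\B^*$-measurable functions under pointwise limits for the forward direction, and the dyadic approximation of bounded measurable functions by simple functions for the reverse.
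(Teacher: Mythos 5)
Your proof is correct and follows essentially the same route as the paper: one direction via the measure-theoretic measurability of simple $\B$-measurable gambles plus closure of measurability under (uniform, hence pointwise) limits, the other via a uniform-grid simple approximation of the bounded function $g$. The only difference is cosmetic: the paper factors the substantive direction through Proposition~\ref{prop:measurable:sufficient:general}, writing the approximants as non-negative combinations of indicators of the nested superlevel sets $\{x\in\states\colon g(x)\geq\tfrac{k}{n}\alpha\}$ (so that the same lemma also applies when $\B$ is not closed under set differences), whereas you use the disjoint level bands $g^{-1}([\nicefrac{k}{n},\nicefrac{(k+1)}{n}))$, whose membership in $\B^*$ does rely on the sigma-field structure but is perfectly valid in the setting of this proposition.
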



The proof of this result is based on the following sufficient condition for $\B$-measurability, which provides a convenient tool for establishing the $\B$-measurability of a given function. In particular, 
it implies that every non-negative gamble is $\nonemptypower$-measurable.

\begin{proposition}\label{prop:measurable:sufficient:general}
Let $\B\subseteq\nonemptypower$ and $g\in\mathcal{G}_{\geq0}(\states)$. If, for all $r\in\rationalsnonneg$, the set $\{x\in\states\colon g(x)\geq r\}$ is a finite union of pairwise disjoint events in $\B\cup\{\states,\emptyset\}$, then $g$ is $\B$-measurable.
\end{proposition}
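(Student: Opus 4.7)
The plan is to construct a sequence of simple $\B$-measurable gambles that approximates $g$ uniformly from below, using the standard idea of rounding the values of $g$ down to multiples of $1/n$ and expressing the result through the level sets of $g$.

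First, I would fix $n\in\nats$ and set $\epsilon\coloneqq\nicefrac{1}{n}\in\rationalsnonneg$. Since $g$ is a gamble, it is bounded, so there exists $N\in\nats$ such that $g(x)\leq N\epsilon$ for all $x\in\states$. For each $k\in\{1,\dots,N\}$, let $A_k\coloneqq\{x\in\states\colon g(x)\geq k\epsilon\}$; since $k\epsilon\in\rationalsnonneg$, the hypothesis guarantees that $A_k$ is a finite union of pairwise disjoint events $B^k_1,\dots,B^k_{m_k}\in\B\cup\{\states,\emptyset\}$. I would then define
\begin{equation*}
g_n\coloneqq\sum_{k=1}^{N}\epsilon\,\ind{A_k}
=\sum_{k=1}^{N}\sum_{j=1}^{m_k}\epsilon\,\ind{B^k_j},
\end{equation*}
where the second equality uses the pairwise disjointness of the $B^k_j$ within each level $k$.

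Next I would verify that $g_n$ is a simple $\B$-measurable gamble. Each indicator $\ind{B^k_j}$ with $B^k_j=\emptyset$ vanishes and can be dropped; each with $B^k_j=\states$ contributes a non-negative constant $\epsilon$ to the overall $c_0$; and each with $B^k_j\in\B$ matches the template of Definition~\ref{def:measurable:simple} with coefficient $\epsilon\in\reals_{\geq0}$. Hence $g_n$ fits the form $c_0+\sum_i c_i\ind{B_i}$ with $c_0,c_i\in\reals_{\geq0}$ and $B_i\in\B$, and it is clearly non-negative.

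The uniform approximation property is then a one-line check: for every $x\in\states$, letting $k_x\coloneqq\lfloor g(x)/\epsilon\rfloor$, one has $x\in A_k$ iff $k\leq k_x$, so $g_n(x)=k_x\epsilon$ and therefore $0\leq g(x)-g_n(x)<\epsilon=\nicefrac{1}{n}$. This gives $\sup\abs{g-g_n}\leq\nicefrac{1}{n}$, so $g_n\to g$ uniformly as $n\to+\infty$, establishing $\B$-measurability by Definition~\ref{def:measurable:uniform}. The only mildly delicate point — which I would flag as the main thing to get right — is the bookkeeping that ensures the decomposition of $\ind{A_k}$ through atoms of $\B\cup\{\states,\emptyset\}$ yields coefficients that remain non-negative and the lone additive constant $c_0$, as required by the somewhat restrictive Definition~\ref{def:measurable:simple}; everything else is routine.
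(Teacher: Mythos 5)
Your proposal is correct and follows essentially the same route as the paper's proof: both approximate $g$ uniformly from below by summing scaled indicators of the superlevel sets $\{x\colon g(x)\geq k\epsilon\}$ at rational thresholds, decompose each such set into pairwise disjoint events of $\B\cup\{\states,\emptyset\}$ to verify simplicity (absorbing $\ind{\states}$ into $c_0$ and dropping $\ind{\emptyset}$), and bound the uniform error by the grid spacing. The only difference is cosmetic: the paper fixes a rational bound $\alpha>\sup g$ and uses $n-1$ levels of width $\nicefrac{\alpha}{n}$, while you use width $\nicefrac{1}{n}$ with a level count $N$ chosen from the bound on $g$.
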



\begin{corollary}\label{corol:measurable:sufficient:allsets}
Every $g\in\mathcal{G}_{\geq0}(\states)$ is $\nonemptypower$-measurable.
\end{corollary}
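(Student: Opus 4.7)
The plan is to derive Corollary~\ref{corol:measurable:sufficient:allsets} as an immediate instance of Proposition~\ref{prop:measurable:sufficient:general} by taking $\B=\nonemptypower$.

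First I would observe that with this choice of $\B$, we have $\B\cup\{\states,\emptyset\}=\power$, so the sufficient condition in Proposition~\ref{prop:measurable:sufficient:general} reduces to the requirement that, for every $r\in\rationalsnonneg$, the upper level set $\{x\in\states\colon g(x)\geq r\}$ is a finite union of pairwise disjoint elements of $\power$. Since this set is itself an element of $\power$, it is trivially such a finite union (namely, a union of one event). Hence the hypothesis of Proposition~\ref{prop:measurable:sufficient:general} is satisfied for every $g\in\gamblesnonneg$, and the conclusion that $g$ is $\nonemptypower$-measurable follows directly.

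There is essentially no obstacle here: all the real work has been done in Proposition~\ref{prop:measurable:sufficient:general}, and the corollary is just the specialisation to the largest possible $\B$. The only tiny point worth making explicit is that $\power$ contains $\states$ and $\emptyset$, so that $\B\cup\{\states,\emptyset\}=\nonemptypower\cup\{\emptyset\}=\power$, which guarantees that every subset of $\states$ (in particular every upper level set of $g$) automatically belongs to $\B\cup\{\states,\emptyset\}$.
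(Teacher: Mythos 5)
Your proposal is correct and matches the paper's approach exactly: the paper also derives the corollary as an immediate consequence of Proposition~\ref{prop:measurable:sufficient:general} with $\B=\nonemptypower$, so that every upper level set trivially lies in $\B\cup\{\states,\emptyset\}=\power$. Your write-up merely makes explicit the one-line observation that the paper leaves implicit.
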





\section{Modelling Uncertainty}\label{sec:modellinguncertainty}

A subject's uncertainty about a variable $X$ that takes values $x$ in some non-empty set $\states$ can be mathematically represented in various ways. The most popular such method is perhaps probability theory, but it is by no means the only one, nor is it the most general one. In order for our results to have a broader scope, we here adopt the frameworks of sets of desirable gambles and conditional lower previsions. 



The main aim of this section is to provide an overview of the basic technical aspects of these frameworks, as these will be essential to the rest of the paper. Notably, we do not impose any constraints on the cardinality of $\states$: it may be finite, countably infinite or uncountably infinite. Connections with other---perhaps better known---models for uncertainty, including probability theory, will be discussed briefly at the end.


The basic idea behind \emph{sets of desirable gambles} is to model a subject's uncertainty about $X$ by considering his attitude towards gambles---bets---on $\states$. In particular, we consider the gambles $f\in\gambles$ that he finds \emph{desirable}, in the sense that he is willing to engage in a transaction where, once the actual value $x\in\states$ of $X$ is known, he will receive a---possibly negative---reward $f(x)$ in some linear utility scale. Even more so, he prefers these desirable gambles over the status quo, that is, over not conducting any transaction at all. A set of desirable gambles is called coherent if it satisfies the following rationality requirements.


\begin{definition}
\label{def:SDG}
A coherent set of desirable gambles $\desir$ on $\states$ is a subset of\/ $\gambles$ such that, for any two gambles $f,g\in\gambles$ and any non-negative real number $\lambda\in\realspos$:
\vspace{2pt}

\begin{enumerate}[label=\emph{D\arabic*:},ref=D\arabic*]
\item
if $f\geq0$ and $f\neq0$, then $f\in\desir$;\label{def:SDG:partialgain}
\item
if $f\in\desir$ then $\lambda f\in\desir$;\label{def:SDG:homo}
\item
if $f,g\in\desir$, then $f+g\in\desir$;\label{def:SDG:convex}
\item
if $f\leq0$, then $f\notin\desir$.\label{def:SDG:partialloss}
\vspace{2pt}
\end{enumerate}
\end{definition}
\noindent
Despite their simplicity, sets of desirable gambles offer a surprisingly powerful framework for modelling uncertainty; see for example~\citep{Walley:2000ef} and~\citep{Quaeghebeur:2014tjb}. 
For our present purposes though, all we need for now is Definition~\ref{def:SDG}.




\emph{Conditional lower previsions} also model a subject's uncertainty about $X$ by considering his attitude towards gambles on $\states$. However, in this case, instead of considering sets of gambles, we consider the prices at which a subject is willing to buy these gambles. Let
\begin{equation*}
\mathcal{C}(\states)
\coloneqq
\gambles\times\nonemptypower
\end{equation*}
be the set of all pairs $(f,B)$, where $f$ is a gamble on $\states$ and $B$ is a non-empty subset of $\states$---an event. A conditional lower prevision is then defined as follows.
\begin{definition}
A conditional lower prevision $\lp$ on $\C\subseteq\C(\states)$ is a map
\begin{equation*}
\lp\colon
\C
\to
\overline{\reals}
\colon
(f,B)\to \lp(f\vert B).\vspace{7pt}
\end{equation*}
\end{definition}
For any $(f,B)$ in the domain $\C$, the lower prevision $\lp(f\vert B)$ of $f$ conditional on $B$ is interpreted as a subject's supremum price $\mu$ for buying $f$, under the condition that the transaction is called off when $B$ does not happen---if $x\notin B$. In other words, $\lp(f\vert B)$ is the supremum value of $\mu$ for which he is willing to engage in a transaction where he receives $f(x)-\mu$ if $x\in B$ and zero otherwise, and furthermore prefers this transaction to the status quo.


It is also possible to consider conditional upper previsions $\overline{P}(f\vert B)$, which are interpreted as infimum selling prices. However, since selling $f$ for $\mu$ is equivalent to buying $-f$ for $-\mu$, we have that $\overline{P}(f\vert B)=-\lp(-f\vert B)$. For that reason, we will mainly focus on conditional lower previsions. Unconditional lower previsions correspond to the special case where $B=\states$ for all $(f,B)\in\C$; we then use the shorthand notation $\lp(f)\coloneqq \lp(f\vert\states)$ and call $\lp(f)$ the lower prevision of $f$. Similarly, we refer to $\overline{P}(f)\coloneqq\overline{P}(f\vert\states)$ as the upper prevision of $f$.

Because of their interpretation in terms of buying prices for gambles, a particularly intuitive way to obtain a conditional lower prevision $\lp$ is to derive it from a set of gambles $\desir$. In particular, for every $\desir\subseteq\gambles$, we let 
\begin{equation}\label{eq:LPfromD}
\lp_\desir(f\vert B)
\coloneqq
\sup\{\mu\in\reals\colon
[f-\mu]\ind{B}\in\desir
\}
\text{~~for all $(f,B)\in\C(\states)$.}
\vspace{3pt}
\end{equation}
A conditional lower prevision is then called coherent if can be derived from a coherent set of desirable gambles in this way.

\begin{definition}
\label{def:cohlp}
A conditional lower prevision $\lp$ on a domain $\C\subseteq\C(\states)$ is coherent if there is a coherent set of desirable gambles $\desir$ on $\states$ such that $\lp$ coincides with $\lp_\desir$ on $\C$.
\end{definition}

This definition of coherence is heavily inspired by the work of~\cite{williams1975,Williams:2007eu}. The only two minor differences are that our rationality axioms on $\desir$ are slightly different from his, and that we do not impose any structure on the domain $\C$. Nevertheless, when the domain $\C$ satisfies the structural constraints in~\citep{Williams:2007eu}, Definition~\ref{def:cohlp} is equivalent to that of Williams. More generally, as the following result establishes, it is equivalent to the structure-free notion of Williams-coherence that was developed by \cite{Pelessoni:2009co}. 


\begin{proposition}\label{prop:equivalentToPelessoniAndVicig}
A conditional lower prevision $\lp$ on $\C\subseteq\C(\states)$ is coherent if and only if it is real-valued and, for all $n\in\natswith$ and all choices of $\lambda_0,\dots,\lambda_n\in\reals_{\geq0}$ and $(f_0,B_0),\dots,(f_n,B_n)\in\mathcal{C}$:
\begin{equation}\label{eq:prop:equivalentToPelessoniAndVicig}
\sup_{x\in B}
\Big(\,
\sum_{i=1}^n
\lambda_i\ind{B_i}(x)
[f_i(x)-\lp(f_i\vert B_i)]
-\lambda_0\ind{B_0}(x)
[f_0(x)-\lp(f_0\vert B_0)]
\Big)
\geq0,
\vspace{6pt}
\end{equation}
where we let $B\coloneqq\cup_{i=0}^nB_i$.
\end{proposition}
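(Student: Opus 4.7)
The plan is to prove the two implications separately: the forward direction is a verification using the supremum definition~\eqref{eq:LPfromD}, while the converse hinges on an explicit construction of a coherent witness.

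For $(\Rightarrow)$, assume $\lp = \lp_\desir$ on $\C$ for some coherent $\desir$. Real-valuedness is obtained via a sandwich: for $(f,B) \in \C$, the gamble $[f-\mu]\ind{B}$ is non-negative and non-zero whenever $\mu < \inf_{x\in B} f(x)$, hence in $\desir$ by \ref{def:SDG:partialgain}, and is non-positive whenever $\mu > \sup_{x\in B} f(x)$, hence excluded by \ref{def:SDG:partialloss}; this yields $\lp(f\vert B) \in [\inf_B f, \sup_B f] \subseteq \reals$. For~\eqref{eq:prop:equivalentToPelessoniAndVicig}, I argue by contradiction. Focussing on the main case $\lambda_0 > 0$ with at least one $\lambda_i > 0$ for $i \geq 1$---the remaining cases reduce to the real-valuedness bounds or to a direct invocation of \ref{def:SDG:partialloss}---assume the supremum is at most $-\delta$ for some $\delta > 0$. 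For small $\epsilon > 0$, equation~\eqref{eq:LPfromD} supplies, for each $i \geq 1$, a $\mu_i \in (\lp(f_i\vert B_i) - \epsilon, \lp(f_i\vert B_i)]$ with $[f_i - \mu_i]\ind{B_i} \in \desir$, while setting $\mu_0 := \lp(f_0\vert B_0) + \epsilon$ gives $[f_0 - \mu_0]\ind{B_0} \notin \desir$. Combining via \ref{def:SDG:homo} and \ref{def:SDG:convex} produces $g := \sum_{i=1}^n \lambda_i[f_i - \mu_i]\ind{B_i} \in \desir$, and for $\epsilon$ small enough the assumed strict negativity converts into a pointwise inequality $\lambda_0 [f_0 - \mu_0]\ind{B_0} \geq g + \eta \ind{B}$ with $\eta > 0$. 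Upward closure of $\desir$---a consequence of \ref{def:SDG:partialgain} and \ref{def:SDG:convex}---then forces $\lambda_0[f_0 - \mu_0]\ind{B_0} \in \desir$, and rescaling via \ref{def:SDG:homo} yields $[f_0 - \mu_0]\ind{B_0} \in \desir$, contradicting $\mu_0 > \lp(f_0\vert B_0)$.

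For $(\Leftarrow)$, assume $\lp\colon \C \to \reals$ satisfies~\eqref{eq:prop:equivalentToPelessoniAndVicig}. I define
\begin{equation*}
\A := \bigl\{[f-\mu]\ind{B}\colon (f,B)\in\C,\, \mu\in\reals,\, \mu<\lp(f\vert B)\bigr\}
\end{equation*}
and set $\desir := \E(\A)$. Axioms \ref{def:SDG:partialgain}--\ref{def:SDG:convex} follow at once from the definitions of $\E$ and $\posi$. For \ref{def:SDG:partialloss}, assume for contradiction some $h \leq 0$ lies in $\desir$ and decompose $h = \sum_{i=1}^n \lambda_i [f_i - \mu_i]\ind{B_i} + g$ with $\lambda_i > 0$, $\alpha_i := \lp(f_i\vert B_i) - \mu_i > 0$, and $g \in \gamblesnonneg$; using $h \leq 0$ together with the identity $[f_i - \mu_i]\ind{B_i} = [f_i - \lp(f_i\vert B_i)]\ind{B_i} + \alpha_i\ind{B_i}$ yields a uniformly strictly negative upper bound on $\sum_{i=1}^n \lambda_i[f_i - \lp(f_i\vert B_i)]\ind{B_i}$ over $\cup_{i=1}^n B_i$, contradicting~\eqref{eq:prop:equivalentToPelessoniAndVicig} with $\lambda_0 = 0$. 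To check $\lp_\desir = \lp$ on $\C$: the bound $\lp_\desir \geq \lp$ is immediate since $\A \subseteq \desir$; for $\lp_\desir \leq \lp$, assume $[f-\mu]\ind{B} \in \desir$ with $\mu > \lp(f\vert B)$, decompose as above, and invoke~\eqref{eq:prop:equivalentToPelessoniAndVicig} with $(f_0, B_0) = (f, B)$ and $\lambda_0 = 1$; the slacks $\alpha_i > 0$ and $\gamma := \mu - \lp(f\vert B) > 0$ jointly yield a strictly negative supremum on $B \cup \cup_{i=1}^n B_i$, giving the required contradiction.

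The principal technical obstacle is the precise bookkeeping of strict inequalities after combination: the slacks coming from $\mu_i < \lp(f_i\vert B_i)$ must combine with the support sets in exactly the right way---uniformly across $\cup_i B_i$, which may be strictly smaller than the $B$ appearing in~\eqref{eq:prop:equivalentToPelessoniAndVicig}---to preserve a strictly negative supremum. Moreover, several degenerate subcases ($n = 0$, decompositions with no $\A$-terms, or $\lambda_0 = 0$) must be handled separately, each reducing to the sandwich bounds $\inf_B f \leq \lp(f\vert B) \leq \sup_B f$, which themselves follow from~\eqref{eq:prop:equivalentToPelessoniAndVicig} applied to small test instances.
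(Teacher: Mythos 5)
Your proposal is correct and follows essentially the same route as the paper's own proof: real-valuedness via the sandwich $\inf_{B}f\leq\lp_{\desir}(f\vert B)\leq\sup_{B}f$, the inequality~\eqref{eq:prop:equivalentToPelessoniAndVicig} via an $\epsilon$-perturbed positive combination of desirable and non-desirable gambles, and the converse via the witness $\E(\A_{\lp})$, whose axiom~\ref{def:SDG:partialloss} and agreement with $\lp$ on $\C$ are verified by decomposing elements of $\posi(\A_{\lp}\cup\gamblespos)$ and invoking~\eqref{eq:prop:equivalentToPelessoniAndVicig} with $\lambda_0=0$ and $\lambda_0=1$, respectively. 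The only cosmetic difference is in the forward direction, where you run the combination argument as a contradiction through upward closure of $\desir$, while the paper directly deduces $\lambda_0g_0-\sum_{i=1}^n\lambda_ig_i\notin\gamblespos$ and converts this into the bound $\sup_{x\in B}h(x)\geq-\epsilon\sum_{i=0}^n\lambda_i$ before letting $\epsilon\to0$.
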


The advantage of this alternative characterisation is that it is expressed directly in terms of lower previsions. Nevertheless, we consider Equation~\eqref{eq:prop:equivalentToPelessoniAndVicig} to be less intuitive than Definition~\ref{def:cohlp}, which is why we prefer the latter. 

From a mathematical point of view, Definition~\ref{def:cohlp} also has the advantage that it allows for simple and elegant proofs of some well-known results. For example, it follows trivially from our definition of coherence that the domain of a coherent conditional lower prevision can be arbitrarily extended while preserving coherence, whereas deriving this result directly from Equation~\ref{eq:prop:equivalentToPelessoniAndVicig} is substantially more involved; see for example the proof of \cite[Proposition~1]{Pelessoni:2009co}. Furthermore, our definition also allows for a very natural derivation of the so-called \emph{natural extension} of $\lp$, that is, the most conservative extension of $\lp$ to $\C(\states)$. In particular, instead of having to derive this natural extension directly, Definition~\ref{def:cohlp} allows us to rephrase this problem into a closely related yet simpler question: what is the smallest coherent set of desirable gambles $\desir$ on $\states$ such that $\lp_\desir$ coincides with $\lp$ on $\C$? The answer turns out to be surprisingly simple.

\begin{proposition}\label{prop:smallestSDGfromLP}
Consider a coherent conditional lower prevision $\lp$ on $\C\subseteq\C(\states)$ and let
\begin{equation}\label{eq:AfromLP}
\A_{\lp}
\coloneqq
\big\{
[f-\mu]\ind{B}
\colon
(f,B)\in\C, \mu<\lp(f\vert B)
\big\}
~~\text{and}~~
\E(\lp)\coloneqq\E(\A_{\lp}).
\end{equation}
Then $\E(\lp)$ is a coherent set of desirable gambles on $\states$ and $\lp_{\E(\lp)}$ coincides with $\lp$ on $\C$. Furthermore, for any other coherent set of desirable gambles $\desir$ on $\states$ such that $\lp_{\desir}$ coincides with $\lp$ on $\C$, we have that $\E(\lp)\subseteq\desir$.
\end{proposition}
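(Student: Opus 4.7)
The plan is to exploit Definition~\ref{def:cohlp} directly: because $\lp$ is coherent, some coherent $\desir_0$ on $\states$ satisfies $\lp_{\desir_0}=\lp$ on $\C$, and everything will follow by sandwiching $\E(\lp)$ between $\A_\lp\cup\gamblespos$ and $\desir_0$.

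First I would verify the key inclusion $\A_\lp\subseteq\desir_0$. Fix $(f,B)\in\C$ and $\mu<\lp(f\vert B)=\lp_{\desir_0}(f\vert B)$. By definition of $\lp_{\desir_0}$ via Equation~\eqref{eq:LPfromD}, there is some $\mu''\in(\mu,\lp(f\vert B)]$ with $[f-\mu'']\ind{B}\in\desir_0$. Since $B$ is non-empty, $(\mu''-\mu)\ind{B}$ is a non-zero non-negative gamble and so lies in $\desir_0$ by~\ref{def:SDG:partialgain}; adding it to $[f-\mu'']\ind{B}$ and invoking~\ref{def:SDG:convex} yields $[f-\mu]\ind{B}\in\desir_0$. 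Since $\gamblespos\subseteq\desir_0$ by~\ref{def:SDG:partialgain}, and since~\ref{def:SDG:homo} and~\ref{def:SDG:convex} imply $\desir_0$ is closed under $\posi(\cdot)$, we conclude that $\E(\lp)=\posi(\A_\lp\cup\gamblespos)\subseteq\desir_0$. Exactly the same argument, applied with any coherent $\desir$ satisfying $\lp_\desir=\lp$ on $\C$ in place of $\desir_0$, yields the minimality statement $\E(\lp)\subseteq\desir$.

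Next I would check that $\E(\lp)$ itself is coherent. Axioms~\ref{def:SDG:partialgain},~\ref{def:SDG:homo} and~\ref{def:SDG:convex} are immediate from the definition $\E(\lp)=\posi(\A_\lp\cup\gamblespos)$: non-negative non-zero gambles are trivially in the set (take a single term with $\lambda=1$), and the $\posi$ operator is closed under positive scaling and addition. The only delicate axiom is~\ref{def:SDG:partialloss}, which is where the inclusion $\E(\lp)\subseteq\desir_0$ pays off: if some $f\leq0$ were in $\E(\lp)$, it would also lie in $\desir_0$, contradicting~\ref{def:SDG:partialloss} for $\desir_0$.

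Finally I would show $\lp_{\E(\lp)}=\lp$ on $\C$ by two matching inequalities. For any $(f,B)\in\C$ and any $\mu<\lp(f\vert B)$, the gamble $[f-\mu]\ind{B}$ lies in $\A_\lp\subseteq\E(\lp)$, so $\lp_{\E(\lp)}(f\vert B)\geq\mu$; letting $\mu\uparrow\lp(f\vert B)$ gives $\lp_{\E(\lp)}\geq\lp$. Conversely, $\E(\lp)\subseteq\desir_0$ implies $\lp_{\E(\lp)}\leq\lp_{\desir_0}=\lp$ directly from Equation~\eqref{eq:LPfromD}. The main conceptual obstacle is really just recognising that the witnessing $\desir_0$ supplied by Definition~\ref{def:cohlp} can simultaneously play two roles: it bounds $\E(\lp)$ from above (giving~\ref{def:SDG:partialloss} and the upper bound on $\lp_{\E(\lp)}$), while the positivity of $\ind{B}$ on~$B$ together with the $\posi$ closure gives the lower bound; no appeal to the more technical Williams-style inequality of Proposition~\ref{prop:equivalentToPelessoniAndVicig} is needed.
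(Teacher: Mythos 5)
Your proof is correct and follows essentially the same route as the paper's: both obtain a witnessing coherent $\desir_0$ from Definition~\ref{def:cohlp}, show $\A_{\lp}\subseteq\desir_0$ by splitting $[f-\mu]\ind{B}$ as $[f-\mu^*]\ind{B}+[\mu^*-\mu]\ind{B}$ and invoking~\ref{def:SDG:partialgain} and~\ref{def:SDG:convex}, deduce $\E(\lp)\subseteq\desir_0$ (the paper via Lemmas~\ref{lemma:nestedpropsofposandE} and~\ref{lemma:natextDisD}, you by direct $\posi$-closure, which is the same computation), get~\ref{def:SDG:partialloss} for $\E(\lp)$ from this inclusion, and sandwich $\lp_{\E(\lp)}$ between the two obvious bounds. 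No gaps.
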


Abstracting away some technical details, the reason why this result holds should be intuitively clear. First, since conditional lower previsions are interpreted as called-off supremum buying prices, we see that the gambles in $\A_{\lp}$ should be desirable. Combined with~\ref{def:SDG:partialgain}--\ref{def:SDG:convex}, the desirability of the gambles in $\E(\lp)$ then follows.






Since smaller sets of desirable gambles lead to more conservative---pointwise smaller---lower previsions, we conclude that the natural extension of $\lp$ is given by
\begin{equation}\label{eq:naturalextension}
\natexLP(f\vert B)\coloneqq\lp_{\E(\lp)}(f\vert B)
\text{~~for all 
$(f,B)\in\C(\states)$.}
\end{equation}
The following proposition  provides a formal statement of this result.

\begin{proposition}\label{prop:naturalextension:full}
Let $\lp$ be a coherent conditional lower prevision on $\C\subseteq\C(\states)$. Then $\natexLP$, as defined by Equation~\eqref{eq:naturalextension}, is the pointwise smallest coherent conditional lower prevision on $\C(\states)$ that coincides with $\lp$ on $\C$.
\end{proposition}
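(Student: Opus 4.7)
The plan is to establish three claims: (i)~$\natexLP$ is a coherent conditional lower prevision on $\C(\states)$, (ii)~$\natexLP$ agrees with $\lp$ on $\C$, and (iii)~$\natexLP$ is pointwise dominated by every coherent extension of $\lp$ to $\C(\states)$. Claims~(ii) and~(iii) will follow almost immediately from Proposition~\ref{prop:smallestSDGfromLP}, so the only genuine work is in~(i).

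For~(i), Proposition~\ref{prop:smallestSDGfromLP} already gives that $\E(\lp)$ is a coherent set of desirable gambles, so by Definition~\ref{def:cohlp} the map $\natexLP=\lp_{\E(\lp)}$ is coherent on $\C(\states)$ provided it is real-valued. To establish finiteness at a given $(f,B)\in\C(\states)$, I would set $M\coloneqq\sup\abs{f}<\infty$ and argue on both sides. For any $\mu>M$, the gamble $[f-\mu]\ind{B}$ is non-positive and strictly negative somewhere on the non-empty set $B$, so axiom~\ref{def:SDG:partialloss} keeps it out of $\E(\lp)$, giving $\natexLP(f\vert B)\leq M$. Symmetrically, for $\mu<-M$, the gamble $[f-\mu]\ind{B}$ is non-negative and nonzero, so axiom~\ref{def:SDG:partialgain} forces it into $\E(\lp)$, giving $\natexLP(f\vert B)\geq-M$. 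Claim~(ii) is then exactly the agreement statement in Proposition~\ref{prop:smallestSDGfromLP}.

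For~(iii), I would take any coherent conditional lower prevision $\lp'$ on $\C(\states)$ agreeing with $\lp$ on $\C$. By Definition~\ref{def:cohlp}, $\lp'=\lp_{\desir'}$ on $\C(\states)$ for some coherent set $\desir'$; restricting this equality to $\C$ shows that $\lp_{\desir'}$ coincides with $\lp$ on $\C$, so the minimality clause of Proposition~\ref{prop:smallestSDGfromLP} delivers $\E(\lp)\subseteq\desir'$. Because the supremum in~\eqref{eq:LPfromD} is monotone in the underlying set of gambles, this inclusion gives $\natexLP(f\vert B)=\lp_{\E(\lp)}(f\vert B)\leq\lp_{\desir'}(f\vert B)=\lp'(f\vert B)$ for every $(f,B)\in\C(\states)$. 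The only obstacle worth flagging is thus the real-valuedness check in~(i); all substantive work has been front-loaded into Proposition~\ref{prop:smallestSDGfromLP}, which is precisely the payoff of phrasing coherence in terms of desirable gambles.
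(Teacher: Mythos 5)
Your proof is correct and follows essentially the same route as the paper's, which likewise reduces all three claims to Proposition~\ref{prop:smallestSDGfromLP} (via the intermediate Proposition~\ref{prop:naturalextension} in the appendix, whose minimality argument is exactly your step~(iii)). The only difference is your explicit real-valuedness check in~(i), which is harmless but unnecessary: Definition~\ref{def:cohlp} declares $\lp_{\E(\lp)}$ coherent as soon as $\E(\lp)$ is a coherent set of desirable gambles, and real-valuedness is then automatic by Proposition~\ref{prop:equivalentToPelessoniAndVicig} (whose proof contains the same $\inf f\leq\lp_\desir(f\vert B)\leq\sup f$ bound you give).
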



All in all, we conclude that Definition~\ref{def:cohlp} provides an intuitive as well as mathematically convenient characterisation of Williams-coherence that is furthermore equivalent to the structure-free version of~\cite{Pelessoni:2009co}. 
From a technical point of view, this equivalence will not be important further on, since all of our arguments will be based on the connection with sets of desirable gambles.
From a practical point of view though, this equivalence is highly important, because the Williams-coherent conditional lower previsions that are considered in~\citep{Pelessoni:2009co} are well-known to include as special cases a variety of other uncertainty models, including expectations, lower expectations, probabilities, lower probabilities and belief functions; lower probabilities, for example, can be obtained by restricting the domain of $\lp$ to indicators. For that reason, all of our results can be immediately applied to these special cases as well. 
A detailed treatment of these special cases, however, does not fit within the page constraints of this contribution, and therefore falls beyond the scope our present work.

\section{Epistemic Independence}\label{sec:independence}

Having introduced our main tools for modelling uncertainty, the next step towards developing a notion of independent natural extension is to agree on what we mean by independence. Within the context of lower previsions, there are basically two main options.

 The first approach, which we will not consider here, is to interpret lower previsions as lower expectations, that is, as tight lower bounds on the expectations that correspond to some set of probability measures, and to then impose the usual notion of independence on each of the probability measures in that set. This approach has the advantage of being familiar, but is restricted in scope because it can only be applied to uncertainty models that are expressed in terms of probabilities. 

 The second approach, which is the one that we will adopt here, is to regard independence as an assessment of mutual irrelevance. In particular, we say that $X_1$ and $X_2$ are independent if our uncertainty model for $X_1$ is not affected by conditioning on information about $X_2$, and vice versa. This definition can easily be applied to a probability measure, and then yields the usual notion of independence. However, and that is what makes this approach powerful and intuitive, it can just as easily be applied to lower previsions, sets of desirable gambles, or any other type of uncertainty model. This type of independence is usually referred to as epistemic independence. The aim of this section is to formalize this concept for the case of two variables, in terms of sets of desirable gambles and conditional lower previsions.

Consider two variables $X_1$ and $X_2$ where, for every $i\in\{1,2\}$, $X_i$ takes values $x_i$ in a non-empty set $\states_i$ that may be uncountably infinite, and let $X\coloneqq(X_1,X_2)$ be the corresponding joint variable that takes values $x\coloneqq(x_1,x_2)$ in $\states_1\times\states_2$. In this context, whenever convenient, we will identify $B\in\nonemptypoweron{1}$ with $B\times\states_2$ and $B\in\nonemptypoweron{2}$ with $\states_1\times B$. Similarly, for any $i\in\{1,2\}$, we will identify $f\in\gambleson{i}$ with its cylindrical extension to $\mathcal{G}(\states_1\times\states_2)$, defined by 
\begin{equation*}
f(x_1,x_2)\coloneqq f(x_i)
\text{~~for all $x=(x_1,x_2)\in\states_1\times\states_2$}.
\end{equation*}
In order to make this explicit, we will then often denote this cylindrical extension by $f(X_i)$. In this way, for example, for any $f\in\gambleson{2}$ and $B\in\poweron{1}$, we can write $f(X_2)\ind{B}(X_1)$ to denote a gamble in $\mathcal{G}(\states_1\times\states_2)$ whose value in $(x_1,x_2)$ is equal to $f(x_2)$ if $x_1\in B$ and equal to zero otherwise. Using these conventions, for any set of gambles $\desir$ on $\states_1\times\states_2$, we define the marginal models\vspace{2pt}
\begin{equation*}
\marg_1(\desir)\coloneqq\{f\in\gambleson{1}\colon f(X_1)\in\desir\}
~~\text{and}~~
\marg_2(\desir)\coloneqq\{f\in\gambleson{2}\colon f(X_2)\in\desir\}
\vspace{2pt}
\end{equation*}
and, for any events $B_1\in\nonemptypoweron{1}$ and $B_2\in\nonemptypoweron{2}$, the conditional models
\vspace{3pt}
\begin{equation*}
\marg_1(\desir\rfloor B_2)\coloneqq\{f\in\gambleson{1}\colon f(X_1)\ind{B_2}(X_2)\in\desir\}\vspace{-6pt}
\end{equation*}
and
\begin{equation*}
\marg_2(\desir\rfloor B_1)\coloneqq\{f\in\gambleson{2}\colon f(X_2)\ind{B_1}(X_1)\in\desir\}.\vspace{8pt}
\end{equation*}
Conditioning and marginalisation both preserve coherence: if $\desir$ is a coherent set of desirable gambles on $\states_1\times\states_2$, then $\marg_1(\desir)$ and $\marg_1(\desir\rfloor B_2)$ are coherent sets of desirable gambles on $\states_1$, and $\marg_2(\desir)$ and $\marg_2(\desir\rfloor B_1)$ are coherent sets of desirable gambles on $\states_2$.

That being said, let us now recall our informal definition of epistemic independence, which was that the uncertainty model for $X_1$ is not affected by conditioning on information about $X_2$, and vice versa. In the context of sets of desirable gambles, this can now be formalized as follows:
\begin{equation*}
\marg_1(\desir\vert B_2)=\marg_1(\desir)
~~\text{and}~~
\marg_2(\desir\vert B_1)=\marg_2(\desir).
\end{equation*}
The only thing that is left to specify are the conditioning events $B_1$ and $B_2$ for which we want this condition to hold. We think that the most intuitive approach is to impose this for every $B_1\in\nonemptypoweron{1}$ and $B_2\in\nonemptypoweron{2}$, and will call this epistemic subset-independence. However, this is not what is usually done. The conventional approach, which we will refer to as epistemic value-independence, is to focus on singleton events of the type $B_1=\{x_1\}$ and $B_2=\{x_2\}$; see for example~\citep{Walley:1991vk} and~\citep{deCooman:2012vba}. We believe this conventional approach to be flawed and will argue against it further on. Until then, we postpone this debate by adopting a very general approach that subsumes the former two as special cases. In particular, for every $i\in\{1,2\}$, we simply fix a generic set of conditioning events $\B_i\subseteq\nonemptypoweron{i}$. Epistemic value-independence corresponds to choosing $\B_i=\states_i$, whereas epistemic subset-independence corresponds to choosing $\B_i=\nonemptypoweron{i}$.

For sets of desirable gambles, this leads us to the following definition.




\begin{definition}
\label{def:irrSDG}
Let $\desir$ be a coherent set of desirable gambles on $\states_1\times\states_2$. Then $\desir$ is epistemically independent if, for any $i$ and $j$ such that $\{i,j\}=\{1,2\}$:
\begin{equation*}
\marg_i(\desir\rfloor B_j)=\marg_i(\desir)
\text{~~for all $B_j\in\B_j$.}
\end{equation*}
\end{definition}


For coherent lower previsions, as a prerequisite for defining epistemic independence, we require that the domain $\C\subseteq\C(\states_1\times\states_2)$ is independent, by which we mean that for any $i$ and $j$ such that $\{i,j\}=\{1,2\}$, any pair $(f_i,B_i)\in\C(\states_i)$ and any event $B_j\in\B_j$:
\begin{equation}\label{eq:independentdomain}
(f_i,B_i)\in\C
\asa
(f_i,B_i\cap B_j)\in\C.
\end{equation}
Other than that, we impose no restrictions on $\C$; its elements $(f,B)\in\C$ are for example not restricted to the types that appear in Equation~\eqref{eq:independentdomain}. As a result, the following definition of epistemic independence is applicable beyond the context of lower previsions. For example, by restricting the domain to indicators, we obtain a notion of epistemic independence that applies to conditional lower probabilities. A detailed discussion of these special cases, however, is left as future work.

\begin{definition}\label{def:epistemicindependence:LP}
Let $\C\subseteq\C(\states_1\times\states_2)$ be an independent domain. A coherent conditional lower prevision $\lp$ on $\C$ is then epistemically independent if, for any $i$ and $j$ such that $\{i,j\}=\{1,2\}$:
\begin{equation*}
\lp(f_i\vert B_i)=\lp(f_i\vert B_i\cap B_j)
\text{~~for all $(f_i,B_i)\in\C$ and $B_j\in\B_j$.}
\end{equation*}
\end{definition}
Another important feature of this definition is that $B_j$ is not only irrelevant to unconditional local lower previsions of the form $\lp(f_i)$---in the sense that $\lp(f_i)=\lp(f_i\vert B_j)$---but also to conditional local lower previsions such as $\lp(f_i\vert B_i)$---in the sense that $\lp(f_i\vert B_i)=\lp(f_i\vert B_i\cap B_j)$. This type of irrelevance is called h-irrelevance; see~\cite{Cozman:2013us} and~\cite{de2015credal}. Note, however, that this feature is optional within our framework; it only appears when $\C$ is sufficiently large. If $B_i=\states_i$ for all $(f_i,B_i)\in\C$, our definition reduces to the simple requirement that $\lp(f_i)=\lp(f_i\vert B_j)$. 






\section{The Independent Natural Extension}\label{sec:indnatext}


All of that being said, we are now finally ready to introduce our central object of interest, which is the \emph{independent natural extension}. Basically, the question to which this concept provides an answer is always the same: given two local uncertainty models and an assessment of epistemic independence, what then should be the corresponding joint model? The answer, however, depends on the specific framework that is being considered.


Within the framework of sets of desirable gambles, the local uncertainty models are coherent sets of desirable gambles. In particular, for each $i\in\{1,2\}$, we are given a coherent set of desirable gambles $\desir_i$ on $\states_i$. The aim is to combine these local models with an assessment of epistemic independence to obtain a coherent set of desirable gambles $\desir$ on $\states_1\times\states_2$. The first requirement on $\desir$, therefore, is that it should have $\desir_1$ and $\desir_2$ as its marginals, in the sense that $\marg_i(\desir)=\desir_i$ for all $i\in\{1,2\}$. The second is that $\desir$ should be epistemically independent. If both requirements are met, $\desir$ is called an independent product of $\desir_1$ and $\desir_2$. The most conservative among these independent products is called the independent natural extension.

\begin{definition}\label{def:subsetindependentproduct}
An independent product of $\desir_1$ and $\desir_2$ is an epistemically independent coherent set of desirable gambles $\desir$ on $\states_1\times\states_2$ that has $\desir_1$ and $\desir_2$ as its marginals.
\end{definition}

\begin{definition}\label{def:subsetindependentnatex}
The independent natural extension of $\desir_1$ and $\desir_2$ is the smallest independent product of $\desir_1$ and $\desir_2$.
\end{definition}

If all we know is that $\desir$ is epistemically independent and has $\desir_1$ and $\desir_2$ as its marginal models, then the safest choice for $\desir$---the only choice that does not require any additional assessments---is their independent natural extension, provided of course that it exists. In order to show that it always does, we let
\begin{equation}\label{eq:indnatext:SDG}
\desir_1\otimes\desir_2
\coloneqq
\E
\left(
\A_{1\to2}
\cup
\A_{2\to1}
\right),
\vspace{-4pt}
\end{equation}
with
\begin{equation}\label{eq:A12s}
\A_{1\to2}
\coloneqq
\left\{
f_2(X_2)\ind{B_1}(X_1)
\colon
f_2\in\desir_2, 
B_1\in\B_1\cup\{\states_1\}
\right\}
\vspace{-3pt}
\end{equation}
and
\begin{equation}\label{eq:A21s}
\A_{2\to1}
\coloneqq
\left\{
f_1(X_1)\ind{B_2}(X_2)
\colon
f_1\in\desir_1, 
B_2\in\B_2\cup\{\states_2\}
\right\}.
\vspace{8pt}
\end{equation}
The following result establishes that $\desir_1\otimes\desir_2$ is the independent natural extension of $\desir_1$ and $\desir_2$.




\begin{theorem}\label{theo:natext:SDG}
$\desir_1\otimes\desir_2$ is the independent natural extension of $\desir_1$ and $\desir_2$.
\end{theorem}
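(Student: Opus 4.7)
The plan is to verify in turn the four defining properties of the independent natural extension: that $\desir_1\otimes\desir_2$ is a coherent set of desirable gambles on $\states_1\times\states_2$, that its marginals are $\desir_1$ and $\desir_2$, that it is epistemically independent, and that it is contained in every independent product of $\desir_1$ and $\desir_2$.

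For coherence, note first that $\desir_1\otimes\desir_2=\posi((\A_{1\to 2}\cup\A_{2\to 1})\cup\gamblespos)$, so axioms \ref{def:SDG:partialgain}--\ref{def:SDG:convex} are immediate from the defining properties of $\posi$ and from the explicit inclusion of $\gamblespos$. The delicate axiom is \ref{def:SDG:partialloss}. I would suppose, toward a contradiction, that some $h\leq 0$ can be written as
$$h=\sum_j \alpha_j f_{2,j}(X_2)\ind{B_{1,j}}(X_1)+\sum_k \beta_k f_{1,k}(X_1)\ind{B_{2,k}}(X_2)+g,$$
with $f_{1,k}\in\desir_1$, $f_{2,j}\in\desir_2$, coefficients $\alpha_j,\beta_k>0$, and $g\in\gamblesnonneg$. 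By fixing one coordinate at a time, and using the fact that sections of gambles of the form $f_i(X_i)\ind{B_j}(X_j)$ are either scalar multiples of $f_i$ or identically zero, I would reduce this to the existence of a non-positive gamble in $\desir_1$ or $\desir_2$, contradicting the coherence of these marginal sets.

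Next I would establish the marginal and epistemic-independence properties simultaneously by showing that $\marg_i(\desir_1\otimes\desir_2\rfloor B_j)=\desir_i$ for every $B_j\in\B_j\cup\{\states_j\}$ and both choices of $\{i,j\}=\{1,2\}$; the case $B_j=\states_j$ then yields the marginal condition, and the remaining cases yield epistemic independence. The inclusion $\supseteq$ is immediate from the construction: any $f\in\desir_i$ gives $f(X_i)\ind{B_j}(X_j)\in\A_{j\to i}\subseteq\desir_1\otimes\desir_2$. The reverse inclusion reuses the section-and-project technique developed for \ref{def:SDG:partialloss}: any representation of $f(X_i)\ind{B_j}(X_j)$ as a positive combination from $\A_{1\to 2}\cup\A_{2\to 1}\cup\gamblespos$ can, by fixing a suitable $x_j\in B_j$, be collapsed into a representation of a positive multiple of $f$ as a positive combination of elements of $\desir_i$ together with a non-negative gamble on $\states_i$, whence $f\in\desir_i$ by coherence of $\desir_i$.

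Minimality is the cleanest step. If $\desir$ is any independent product of $\desir_1$ and $\desir_2$, then $\marg_2(\desir)=\desir_2$ and, by epistemic independence, $\marg_2(\desir\rfloor B_1)=\marg_2(\desir)=\desir_2$ for every $B_1\in\B_1$; hence $f_2(X_2)\ind{B_1}(X_1)\in\desir$ for every $f_2\in\desir_2$ and every $B_1\in\B_1\cup\{\states_1\}$, which is to say $\A_{1\to 2}\subseteq\desir$. The symmetric argument yields $\A_{2\to 1}\subseteq\desir$, and coherence of $\desir$ then forces $\desir\supseteq\E(\A_{1\to 2}\cup\A_{2\to 1})=\desir_1\otimes\desir_2$. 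The main obstacle I anticipate is the non-trivial direction of coherence, together with the closely related reverse inclusion in the marginal step: one has to rule out that a positive combination of gambles of the form $f_1(X_1)\ind{B_2}(X_2)$ and $f_2(X_2)\ind{B_1}(X_1)$, plus a non-negative gamble, can conspire through cancellation across the two coordinates either to become non-positive or to project onto a non-desirable marginal. The argument must exploit nothing more than the coherence of $\desir_1$ and $\desir_2$.
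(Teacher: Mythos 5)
Your overall architecture (coherence, marginals plus epistemic independence, minimality) matches the paper's, and your minimality step is correct and essentially identical to the paper's. The genuine gap is in the two places you yourself flag as the "main obstacle": the proof of~\ref{def:SDG:partialloss} and the reverse inclusion $\marg_i(\desir_1\otimes\desir_2\rfloor B_j)\subseteq\desir_i$. The section-and-project idea does not go through as described. Fix $x_1\in\states_1$ and look at the section $h(x_1,\cdot)$: the terms $f_{2,j}(X_2)\ind{B_{1,j}}(x_1)$ do collapse to non-negative multiples of elements of $\desir_2$, but the terms $\beta_k f_{1,k}(x_1)\ind{B_{2,k}}(X_2)$ collapse to multiples of indicators with coefficients $\beta_k f_{1,k}(x_1)$ of \emph{arbitrary sign}, since a desirable $f_{1,k}$ need not be non-negative anywhere in particular. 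So from $h(x_1,\cdot)\leq0$ you cannot extract a non-positive element of $\desir_2$; the negative indicator terms can absorb all the positivity. Nor can you pick a single $x_1$ at which every $f_{1,k}(x_1)\geq0$: no such common point need exist. This is exactly the cross-coordinate cancellation you mention, and asserting that the argument "must exploit nothing more than the coherence of $\desir_1$ and $\desir_2$" does not resolve it.

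The paper closes this gap with two tools you would need to supply. For~\ref{def:SDG:partialloss}, it first reduces to pairwise disjoint conditioning events (atoms of the generated finite algebras), then passes to a finite quotient space $\Y_1\times\Y_2$ by taking suprema of the $f_{1,k}$ and $f_{2,j}$ over the cells, and invokes a finite-dimensional separation result (Lemma~2 of de~Cooman and Miranda: $0\notin\E(\A)$ for finite $\A$ on a finite space iff there is a strictly positive mass function making every element of $\A$ have positive expectation). The product of the two resulting mass functions gives an expectation of the quotient gamble that is simultaneously $\leq0$ and $>0$ --- an averaged substitute for the "good point" that need not exist --- with a separate fallback argument when the quotient sets fail to be coherent. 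For the marginal/independence direction, the paper avoids sectioning altogether: if $f_1\notin\desir_1$, it extends $\desir_1$ to the coherent set $\E(\desir_1\cup\{-f_1\})$ (Lemma~\ref{lemma:extendD}) and observes that the corresponding product would then contain both $f_1(X_1)\ind{B_2}(X_2)$ and its negation, hence $0$, contradicting the already-established coherence result. Without these (or equivalent) devices, the two hard steps of your proof remain open.
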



Similar concepts can be defined for conditional lower previsions as well. In that case, the local uncertainty models are coherent conditional lower previsions. In particular, for every $i\in\{1,2\}$, we are given a coherent conditional lower prevision $\lp_i$ on some freely chosen local domain $\C_i\subseteq\C(\states_i)$. 
The aim is now to construct an epistemically independent coherent conditional lower prevision $\lp$ on $\C\subseteq\C(\states_1\times\states_2)$ that has $\lp_1$ and $\lp_2$ as its marginals, in the sense that $\lp$ coincides with $\lp_1$ and $\lp_2$ on their local domain: $\lp(f_i\vert B_i)=\lp_i(f_i\vert B_i)$ for all $i\in\{1,2\}$ and $(f_i,B_i)\in\C_i$. As before, a model that meets these criteria is then called an independent product, and the most conservative among them is called the independent natural extension. Clearly, in order for these notions to make sense, the global domain $\C$ must at least include the local domains $\C_1$ and $\C_2$ and must furthermore be independent in the sense of Equation~\eqref{eq:independentdomain}. The definitions and results below take this for granted.




\begin{definition}\label{def:independentproduct:LP}
 An independent product of $\lp_1$ and $\lp_2$ is an epistemically independent coherent conditional lower prevision on $\C$ that has $\lp_1$ and $\lp_2$ as its marginals.
\end{definition}

\begin{definition}\label{def:independentnatex:LP}
The independent natural extension of $\lp_1$ and $\lp_2$ is the point-wise smallest independent product of $\lp_1$ and $\lp_2$.
\end{definition}

Here too, if all we know is that $\lp$ is epistemically independent and has $\lp_1$ and $\lp_2$ as its marginal models, then the safest choice for $\lp$---the only choice that does not require any additional assessments---is the independent natural extension, provided that it exists. The following result establishes that it does, by showing that it is a restriction of the operator $\lp_1\otimes\lp_2$, defined by
\vspace{2pt}
\begin{equation}\label{eq:indnatex:LP}
(\lp_1\otimes\lp_2)(f\vert B)
\coloneqq
\lp_{\desir}(f\vert B)
\text{~~for all $(f,B)\in\C(\states_1\times\states_2)$, with $\desir=\E(\lp_1)\otimes\E(\lp_2)$.}
\vspace{2pt}
\end{equation}

\begin{theorem}\label{theo:natext:LP}
The independent natural extension of $\lp_1$ and $\lp_2$ is the restriction of $\lp_1\otimes\lp_2$ to $\C$.
\end{theorem}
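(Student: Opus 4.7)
The plan is to reduce everything to Theorem~\ref{theo:natext:SDG} by working with $\desir \coloneqq \E(\lp_1) \otimes \E(\lp_2)$ at the level of desirable gambles and then translating back to lower previsions via Proposition~\ref{prop:smallestSDGfromLP} and Definition~\ref{def:cohlp}. Theorem~\ref{theo:natext:SDG} guarantees that $\desir$ is coherent, epistemically independent, and has marginals $\E(\lp_1)$ and $\E(\lp_2)$, while Equation~\eqref{eq:indnatex:LP} identifies $\lp_1 \otimes \lp_2$ with $\lp_{\desir}$, which is automatically coherent by Definition~\ref{def:cohlp}. It therefore remains to show that the restriction of $\lp_1 \otimes \lp_2$ to $\C$ is (i) an independent product of $\lp_1$ and $\lp_2$ and (ii) the pointwise smallest such.

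For (i), I would first verify the marginal condition: for any $(f_i, B_i) \in \C_i$, the gamble $(f_i(X_i) - \mu)\ind{B_i}(X_i)$ is in $\desir$ iff $(f_i - \mu)\ind{B_i}$ is in $\marg_i(\desir) = \E(\lp_i)$, so taking suprema and using Proposition~\ref{prop:smallestSDGfromLP} yields $(\lp_1 \otimes \lp_2)(f_i|B_i) = \lp_{\E(\lp_i)}(f_i|B_i) = \lp_i(f_i|B_i)$. For epistemic independence, given $(f_i, B_i) \in \C$ and $B_j \in \B_j$, I would unfold $(\lp_1 \otimes \lp_2)(f_i|B_i \cap B_j)$ to $\sup\{\mu \in \reals : (f_i - \mu)\ind{B_i} \in \marg_i(\desir \rfloor B_j)\}$, then apply the epistemic independence of $\desir$ to replace $\marg_i(\desir \rfloor B_j)$ by $\marg_i(\desir)$ and recover $(\lp_1 \otimes \lp_2)(f_i|B_i)$.

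For (ii), let $\lp'$ be any independent product of $\lp_1$ and $\lp_2$ on $\C$. By Definition~\ref{def:cohlp} there is a coherent set of desirable gambles $\desir'$ on $\states_1 \times \states_2$ with $\lp_{\desir'} = \lp'$ on $\C$; it suffices to prove $\desir \subseteq \desir'$, since then $\lp_1 \otimes \lp_2 = \lp_{\desir} \leq \lp_{\desir'} = \lp'$ pointwise on $\C$. Because $\desir'$ is coherent and hence closed under the $\E$-operation, it is enough to verify that every generator in $\A_{1\to2} \cup \A_{2\to1}$ from Equations~\eqref{eq:A12s} and~\eqref{eq:A21s} belongs to $\desir'$. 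For a generator $f_1(X_1)\ind{B_2}(X_2)$ of $\A_{2\to1}$ with $f_1 \in \E(\lp_1)$ and $B_2 \in \B_2 \cup \{\states_2\}$, decompose $f_1$ as a positive linear combination of elements of $\A_{\lp_1}$ and of $\gamblespos$; each summand $(g-\mu)\ind{B_1}$ with $(g, B_1) \in \C_1$ and $\mu < \lp_1(g|B_1)$ lifts to $(g(X_1) - \mu)\ind{B_1 \cap B_2}$, which lies in $\A_{\lp'} \subseteq \desir'$ because Equation~\eqref{eq:independentdomain} places $(g, B_1 \cap B_2)$ in $\C$ and the epistemic independence of $\lp'$ together with its marginal property give $\mu < \lp_1(g|B_1) = \lp'(g|B_1) = \lp'(g|B_1 \cap B_2)$; summands built from non-negative non-zero gambles end up in $\desir'$ directly by axiom~\ref{def:SDG:partialgain}. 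The analogous argument with the indices swapped handles $\A_{1\to2}$.

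The hard part is this last minimality step: one must unfold $\desir = \E(\lp_1) \otimes \E(\lp_2)$ all the way down to its generators and lift each one into $\desir'$ using only the two features that \emph{define} an independent product of lower previsions---marginal agreement and epistemic independence---while staying inside the domain $\C$ of $\lp'$. The independent-domain condition~\eqref{eq:independentdomain} is precisely what makes the required conditioning events $B_1 \cap B_2$ available as legitimate inputs to $\lp'$, and without it the lifting would not even be expressible.
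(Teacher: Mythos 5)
Your proposal is correct and follows essentially the same route as the paper: it establishes that the restriction of $\lp_1\otimes\lp_2$ to $\C$ is an independent product by exploiting the independent-product structure of $\E(\lp_1)\otimes\E(\lp_2)$ at the level of desirable gambles (the paper's Propositions~\ref{prop:productcoherent:LP}--\ref{prop:productindependent:LP}), and then proves minimality exactly as the paper does, by decomposing each generator of $\A_{1\to2}\cup\A_{2\to1}$ into elements of $\A_{\lp_i}\cup\gamblespos$ and lifting each summand into the competing product's set of desirable gambles via the independent-domain condition, marginal agreement and epistemic independence. The only cosmetic difference is that the paper treats the case $B_2=\states_2$ separately (where Equation~\eqref{eq:independentdomain} is not needed since $B_1\cap\states_2=B_1$), which your write-up glosses over but which is trivially handled.
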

Interestingly, as can be seen from this result, the choice of the joint domain $\C$ does not affect the resulting independent natural extension, in the sense that any $\C$ that includes $(f,B)$ will lead to the same value of $(\lp_1\otimes\lp_2)(f\vert B)$. For that reason, we will henceforth assume without loss of generality that $\C=\C(\states_1\times\states_2)$.




\section{On the Choice of Conditioning Events}\label{sec:choiceofevents}


The fact that the existence results in the previous section are valid regardless of the choice of $\B_1$ and $\B_2$ should not be taken to mean that this choice does not affect the model. In some cases, it most definitely does. In the remainder of this contribution, we will study the extend to which it does, and how it affects the properties of the resulting notion of independent natural extension.

As a first observation, we note that larger sets of conditioning events correspond to stronger assessments of epistemic independence, and therefore lead to more informative joint models. For example, as can be seen from Equations~\eqref{eq:indnatext:SDG}--\eqref{eq:A21s}, adding events to $\B_1$ and $\B_2$ leads to a larger---more informative---set of desirable gambles $\desir_1\otimes\desir_2$. Similarly, as can be seen from Equation~\eqref{eq:indnatex:LP}, it leads to a joint lower prevision that is higher---and therefore again more informative.
There is one important exception to this observation though, which occurs when we add conditioning events that are a finite disjoint union of other conditioning events. In that case, the resulting notion of independent natural extension does not change.

\begin{proposition}\label{prop:addfiniteunionstoB}
For each $i\in\{1,2\}$, let $\B'_i$ be a superset of $\B_i$ that consists of finite disjoint unions of events in $\B_i$. Replacing $\B_1$ by $\B'_1$ and $\B_2$ by $\B'_2$ then has no effect on the resulting independent natural extension $\desir_1\otimes\desir_2$ or $\lp_1\otimes\lp_2$. 
\end{proposition}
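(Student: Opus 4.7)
The plan is to prove the statement for sets of desirable gambles first; the lower prevision version then follows at once from Equation~\eqref{eq:indnatex:LP}, since Proposition~\ref{prop:smallestSDGfromLP} defines $\E(\lp_i)$ from $\lp_i$ alone, with no reference to $\B_i$. So throughout, write $\A'_{1\to2}$ and $\A'_{2\to1}$ for the generator sets obtained from Equations~\eqref{eq:A12s}--\eqref{eq:A21s} with $\B_i$ replaced by $\B'_i$. The inclusion $\desir_1 \otimes \desir_2 \subseteq \E(\A'_{1\to2} \cup \A'_{2\to1})$ is immediate from $\B_i \subseteq \B'_i$, so the substance is the reverse inclusion.

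The key step I would carry out is to show that every generator $f_2(X_2)\ind{B'_1}(X_1)$ of $\A'_{1\to2}$ already lies in $\desir_1 \otimes \desir_2$. The case $B'_1 = \states_1$ is trivial since $\states_1$ is also in $\B_1 \cup \{\states_1\}$. Otherwise, by hypothesis we may write $B'_1 = \bigsqcup_{k=1}^n B_1^{(k)}$ with pairwise disjoint $B_1^{(k)} \in \B_1$, so that $\ind{B'_1} = \sum_{k=1}^n \ind{B_1^{(k)}}$ and
\begin{equation*}
f_2(X_2)\ind{B'_1}(X_1) = \sum_{k=1}^n f_2(X_2)\ind{B_1^{(k)}}(X_1).
\end{equation*}
Each summand lies in $\A_{1\to2}$, hence in $\desir_1 \otimes \desir_2$, and axiom~\ref{def:SDG:convex} closes the sum inside $\desir_1 \otimes \desir_2$. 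The argument for elements of $\A'_{2\to1}$ is symmetric.

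Having shown $\A'_{1\to2} \cup \A'_{2\to1} \subseteq \desir_1 \otimes \desir_2$, coherence of $\desir_1 \otimes \desir_2$—which by~\ref{def:SDG:partialgain} contains $\gamblespos$ and by~\ref{def:SDG:homo}--\ref{def:SDG:convex} is closed under $\posi$—gives $\E(\A'_{1\to2} \cup \A'_{2\to1}) \subseteq \desir_1 \otimes \desir_2$, yielding the desired equality. The lower prevision half of the proposition then follows without extra work from the remark above about Proposition~\ref{prop:smallestSDGfromLP}.

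I do not anticipate a genuine obstacle here. The one place that requires care is exploiting the disjointness of the $B_1^{(k)}$: it is precisely this disjointness that turns $\ind{B'_1}$ into an additive sum of the $\ind{B_1^{(k)}}$, so that adding disjoint unions to $\B_i$ contributes nothing new to the $\posi$-closure of the generators. Without disjointness one would be forced into an inclusion–exclusion expansion with negative coefficients, and the additivity axiom~\ref{def:SDG:convex} alone would no longer suffice.
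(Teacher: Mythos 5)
Your proposal is correct and follows essentially the same route as the paper's own proof: the case $B'_1=\states_1$ handled trivially, the decomposition $\ind{B'_1}=\sum_{k}\ind{B_1^{(k)}}$ exploiting disjointness, closure of the generators under the $\E$ operator (the paper phrases this via Lemmas~\ref{lemma:nestedpropsofposandE} and~\ref{lemma:natextDisD} together with Proposition~\ref{prop:productcoherent:SDG}), and the observation that the lower-prevision statement follows immediately from Equation~\eqref{eq:indnatex:LP}. No gaps.
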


As a particular case of this result, it follows that if $\B_i$ is a finite partition of $\states_i$, we can replace it by the generated algebra---minus the empty event. As an even more particular case, if $\states_1$ and $\states_2$ are finite, we find that epistemic value- and subset-independence lead to the same notion of independent natural extension. For that reason, in the finite case, it does not really matter which of these two types of epistemic independence is adopted.

In the infinite case though, the difference does matter, and the debate between epistemic value- and subset-independence remains open. For lower previsions,~\cite{Miranda2015460} recently adopted epistemic value-independence in combination with Walley-coherence. Unfortunately, they found that the corresponding notion of independent natural extension does not always exist. They also considered the combination of epistemic value-independence with Williams-coherence, and argued that the resulting model was too weak.
For the case of lower probabilities,~\cite{Vicig:2000vh} adopted epistemic subset-independence in combination with Williams-coherence, showed that the corresponding independent natural extension always exists, and proved that it satisfies factorisation properties. Our results so far can be regarded as a generalisation of the existence results of~\cite{Vicig:2000vh}. 
As we are about to show, his factorisation results can be generalised as well.



\section{Factorisation and External Additivity}\label{sec:factadd}

When $\states_1$ and $\states_2$ are finite, the independent natural extension of two lower previsions $\lp_1$ and $\lp_2$ is well-known to satisfy the properties of factorisation and external additivity~\citep{deCooman:2011ey}. Factorisation, on the one hand, states that
\begin{equation}\label{eq:finitefactorisation}
(\lp_1\otimes\lp_2)(gh)=\lp_1(g\lp_2(h))=
\begin{cases}
\lp_1(g)\lp_2(h)
&\text{ if $\lp_2(h)\geq0$}\\
\overline{P}_1(g)\lp_2(h)
&\text{ if $\lp_2(h)\leq0$},
\end{cases}
\end{equation}
where $g$ is a non-negative gamble on $\states_1$, $h$ is a gamble on $\states_2$ and $\overline{P}_1(g)\coloneqq-\lp_1(-g)$. By symmetry, the role of $1$ and $2$ can of course be reversed. External additivity, on the other hand, states that
\begin{equation}\label{eq:finiteexternaladditivity}
(\lp_1\otimes\lp_2)(f+h)=\lp_1(f)+\lp_2(h)
\end{equation}
where $f$ and $h$ are gambles on $\states_1$ and $\states_2$, respectively. 

Compared to the properties that are satisfied by the joint expectation of a product measure of two precise probability measures, these notions of factorisation and external additivity are rather weak. For example, for a precise product measure, additivity is not `external', in the sense that $f$ and $h$ do not have to be defined on separate variables, nor does factorisation require $g$ to be non-negative. Nevertheless, even in this weaker form, these properties remain of crucial practical importance. For example, in the context of credal networks---Bayesian networks whose local models are imprecise---they turned out to be the key to the development of efficient inference algorithms; see for example~\cite{deCooman:2010gd},~\cite{DeBock:2014ts} and~\cite{de2015credal}. Any notion of independent natural extension that aims to extend such algorithms to infinite spaces, therefore, should preserve some suitable version of Equations~\eqref{eq:finitefactorisation} and~\eqref{eq:finiteexternaladditivity}.


The aim of this section is to study the extent to which these equations are satisfied by the notion of independent natural extension that was developed in this paper. As we will see, the answer 
ends up being surprisingly positive. 

For all $i\in\{1,2\}$, let $\lp_i$ be a coherent conditional lower prevision on $\C_i\subseteq\C(\states_i)$, let $\natexLP_i$ be its natural extension to $\C(\states_i)$, and let $\B_i$ be a subset of $\nonemptypoweron{i}$. The independent natural extension of $\lp_1$ and $\lp_2$ then satisfies the following three properties, the first of which implies the other two as special cases.

\begin{theorem}\label{theo:fact-add-measurable}
Let $\{i,j\}=\{1,2\}$. For any $f\in\gambleson{i}$, $h\in\gambleson{j}$ and $\B_i$-measurable $g\in\mathcal{G}_{\geq0}(\states_i)$, we then have that
\begin{equation*}
(\lp_1\otimes\lp_2)(f+gh)
=\natexLP_i\big(f+g\natexLP_j(h)\big).
\vspace{6pt}
\end{equation*}
\end{theorem}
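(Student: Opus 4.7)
By Theorem~\ref{theo:natext:LP} and~\eqref{eq:naturalextension}, the identity to be proved is equivalent to a statement about desirable gambles: setting $\desir:=\E(\lp_1)\otimes\E(\lp_2)$ and $\nu:=\natexLP_j(h)$, what needs to be shown is that
\begin{equation*}
\sup\{\mu\in\reals\colon f+gh-\mu\in\desir\}=\natexLP_i(f+g\nu).
\end{equation*}
My plan is to establish the inequalities ``$\geq$'' and ``$\leq$'' separately.

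\textbf{The $\geq$ direction.} Fix $\mu_0<\natexLP_i(f+g\nu)$; the aim is to show $f+gh-\mu_0\in\desir$. I first treat simple $\B_i$-measurable $g$ of the form $g=\sum_{k=0}^n c_k\ind{B_k}$ with $c_k\in\realsnonneg$, $B_0=\states_i$ and $B_1,\dots,B_n\in\B_i$ (the case $g\equiv 0$ being trivial). Pick $\mu_1\in(\mu_0,\natexLP_i(f+g\nu))$ and $\epsilon>0$ small enough that $\epsilon\sup g<\mu_1-\mu_0$, and decompose
\begin{equation*}
f+gh-\mu_0=[f+g\nu-\mu_1]+g[h-\nu+\epsilon]+[\mu_1-\mu_0-\epsilon g].
\end{equation*}
The first bracket lies in $\E(\lp_i)$ by the definition of $\natexLP_i=\lp_{\E(\lp_i)}$, and is embedded in $\desir$ via $\A_{j\to i}$ with conditioning event $\states_j$. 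The second equals $\sum_{k=0}^n c_k\ind{B_k}(X_i)(h-\nu+\epsilon)(X_j)$; since $h-\nu+\epsilon\in\E(\lp_j)$ (because $\nu-\epsilon<\natexLP_j(h)$) and $B_k\in\B_i\cup\{\states_i\}$, each summand is a non-negative scaling of an element of $\A_{i\to j}$, hence lies in $\desir$. The third bracket is pointwise at least $\mu_1-\mu_0-\epsilon\sup g>0$, so lies in $\gamblespos\subseteq\desir$. Axiom~\ref{def:SDG:convex} then yields $f+gh-\mu_0\in\desir$. For general $\B_i$-measurable $g$, Definition~\ref{def:measurable:uniform} supplies simple $\B_i$-measurable $g_n\to g$ uniformly; since $h$ and $\nu$ are bounded, $g_n h\to gh$ and $g_n\nu\to g\nu$ uniformly, and coherent lower previsions are $1$-Lipschitz in the supremum norm, so the simple-case inequality passes to the limit.

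\textbf{The $\leq$ direction.} Introduce the cascade functional $\lp^*\colon\gambles(\states_1\times\states_2)\to\reals$ by $\lp^*(\phi):=\natexLP_i(\psi_\phi)$, with $\psi_\phi(x_i):=\natexLP_j(\phi(x_i,\cdot))$; boundedness of $\phi$ ensures $\psi_\phi\in\gambleson{i}$. Composing the coherence axioms of $\natexLP_i$ and $\natexLP_j$ (positive homogeneity, superadditivity, constant additivity, monotonicity, sup-bound) shows that $\lp^*$ is an unconditional coherent lower prevision on $\states_1\times\states_2$. Its marginals match: for $f_i\in\gambleson{i}$ one has $\psi_{f_i(X_i)}(x_i)=f_i(x_i)$, whence $\lp^*(f_i)=\natexLP_i(f_i)=\lp_i(f_i)$ on $\C_i$, and symmetrically for $f_j$. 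After extending $\lp^*$ to $\C(\states_1\times\states_2)$ via its own natural extension (Proposition~\ref{prop:naturalextension:full}), I would verify that the extension satisfies Definition~\ref{def:epistemicindependence:LP}, making $\lp^*$ an independent product of $\lp_1$ and $\lp_2$. The minimality clause of Theorem~\ref{theo:natext:LP} then yields $(\lp_1\otimes\lp_2)\leq\lp^*$; at $\phi=f+gh$, non-negativity of $g(x_i)$ together with constant additivity and positive homogeneity of $\natexLP_j$ collapse $\psi_{f+gh}(x_i)$ to $f(x_i)+g(x_i)\nu$, so $\lp^*(f+gh)=\natexLP_i(f+g\nu)$, which is precisely the desired upper bound.

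\textbf{Main obstacle.} The hard part will be verifying epistemic independence of the extended cascade $\lp^*$ in the conditional sense of Definition~\ref{def:epistemicindependence:LP}: unconditional coherence and the marginal identities follow almost formally, but checking that $\lp^*(f_i\vert B_i)=\lp^*(f_i\vert B_i\cap B_j)$ for all $B_j\in\B_j$ requires a delicate tracking of how the natural extension from unconditional to conditional interacts with the iterated $\natexLP_j$-$\natexLP_i$ reduction. An alternative route that sidesteps $\lp^*$ is to work directly with $\desir$: expanding any witness $f+gh-\mu\in\desir$ as a finite positive combination of elements of $\A_{1\to 2}\cup\A_{2\to 1}\cup\gamblespos(\states_1\times\states_2)$, applying $\natexLP_j$ pointwise in $x_i$ (so that non-negativity of $g$ keeps the left-hand side exactly $f+g\nu-\mu$), and then applying $\natexLP_i$, should by superadditivity force $\mu\leq\natexLP_i(f+g\nu)$.
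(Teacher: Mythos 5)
Your lower bound (``$\geq$'') is essentially sound and is a legitimate variant of the paper's argument: the paper proves it at the level of lower previsions (Lemma~\ref{lemma:fact-add-simple-geq}), decomposing $f+gh$ as $f+g\natexLP_j(h)$ plus correction terms $c_k\ind{B_k}[h-\natexLP_j(h)]$ that are annihilated by the generalised Bayes rule~\ref{def:lowerprev:GBR} together with Proposition~\ref{prop:independent:localnatex}, whereas you exhibit explicit witnesses in $\desir_1\otimes\desir_2$ via the three-term split $[f+g\nu-\mu_1]+g[h-\nu+\epsilon]+[\mu_1-\mu_0-\epsilon g]$; both then pass from simple to general $\B_i$-measurable $g$ by uniform continuity~\ref{def:lowerprev:uniformcontinuity}, exactly as in the paper's final step.

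The upper bound (``$\leq$''), however, is not proved, and this is a genuine gap rather than a deferred routine verification. Your primary route needs an epistemically independent coherent \emph{conditional} extension of the cascade $\lp^*$ whose unconditional part is $\natexLP_i\circ\natexLP_j$; but the natural extension of the unconditional $\lp^*$ to $\C(\states_1\times\states_2)$ is essentially vacuous conditional on events of lower probability zero, so it will in general violate Definition~\ref{def:epistemicindependence:LP}, and constructing a more informative extension that is both coherent and epistemically independent is comparable in difficulty to the theorem itself. Your fallback route also breaks down at the $\A_{2\to1}$-type terms: for $a=f_i(X_i)\ind{B_j}(X_j)$ with $f_i\in\E(\lp_i)$, the gamble $x_i\mapsto\natexLP_j\big(f_i(x_i)\ind{B_j}\big)$ equals $f_i(x_i)\natexLP_j(\ind{B_j})$ where $f_i(x_i)\geq0$ but $f_i(x_i)\natexUP_j(\ind{B_j})$ where $f_i(x_i)<0$, and $\natexLP_i$ of that gamble need not be non-negative merely because $\natexLP_i(f_i)\geq0$; superadditivity alone therefore does not force $\mu\leq\natexLP_i(f+g\nu)$. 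The paper's device for this half (Lemma~\ref{lemma:fact-add-simple-leq}) is different and is the idea you are missing: by the lower envelope theorem (Proposition~\ref{prop:lowerenvelope}) choose conditional linear previsions $P_j\geq\natexLP_j$ with $P_j(h)=\natexLP_j(h)$ and $P_i\geq\natexLP_i$ attaining $\natexLP_i\big(f+g\natexLP_j(h)\big)$; domination of the local models gives $\lp_1\otimes\lp_2\leq P_1\otimes P_2$, and then \ref{def:lowerprev:lowerbelowupper} plus the already-established lower bound applied to $-f-gh$ under $P_1\otimes P_2$, combined with self-conjugacy of linear previsions, yields the required upper bound. You would need this, or an equivalent mechanism, to complete the proof.
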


\begin{corollary}[Factorisation]\label{corol:fact-measurable}
Let $\{i,j\}=\{1,2\}$. For any $h\in\gambleson{j}$ and any $g\in\mathcal{G}_{\geq0}(\states_i)$ that is $\B_i$-measurable, we then have that
\vspace{-5pt}
\begin{equation*}
(\lp_1\otimes\lp_2)(gh)
=\natexLP_i\big(g\natexLP_j(h)\big)
=
\begin{cases}
\natexLP_i(g)\natexLP_j(h)
&\text{ if $\natexLP_j(h)\geq0$;}\\
\natexUP_i(g)\natexLP_j(h)
&\text{ if $\natexLP_j(h)\leq0$.}
\end{cases}
\vspace{6pt}
\end{equation*}
\end{corollary}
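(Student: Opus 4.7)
The plan is to derive this corollary directly from Theorem~\ref{theo:fact-add-measurable}, with the sign-dependent factorisation emerging from standard properties of coherent lower previsions.

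First, I would set $f$ to be the identically zero gamble on $\states_i$ (which lies in $\gambleson{i}$) in the identity of Theorem~\ref{theo:fact-add-measurable}. This instantly produces
\[
(\lp_1\otimes\lp_2)(gh) = \natexLP_i\bigl(g\,\natexLP_j(h)\bigr),
\]
which is the first equality in the statement.

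For the second equality, set $c \coloneqq \natexLP_j(h)$; this is a real number by Proposition~\ref{prop:equivalentToPelessoniAndVicig}, and $g\,\natexLP_j(h) = cg$ is a scalar multiple of the non-negative gamble $g$. I would then apply two well-known properties of any coherent unconditional lower prevision: non-negative homogeneity, i.e.\ $\natexLP_i(\lambda g) = \lambda\,\natexLP_i(g)$ for $\lambda \geq 0$, and conjugacy $\natexLP_i(-g) = -\natexUP_i(g)$. Both are straightforward consequences of Proposition~\ref{prop:naturalextension:full} together with Definition~\ref{def:cohlp} and axioms \ref{def:SDG:homo}--\ref{def:SDG:convex}. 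A short case split on the sign of $c$ then gives $\natexLP_i(cg) = c\,\natexLP_i(g)$ when $c \geq 0$ and, writing $c = -\abs{c}$, $\natexLP_i(cg) = \abs{c}\,\natexLP_i(-g) = -\abs{c}\,\natexUP_i(g) = c\,\natexUP_i(g)$ when $c \leq 0$, which is exactly what is required.

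There is no substantive obstacle in this corollary itself; all the technical work is concentrated in Theorem~\ref{theo:fact-add-measurable}, from which the corollary follows by little more than substitution plus an elementary homogeneity-conjugacy split. The only thing worth flagging is the overlap at $\natexLP_j(h) = 0$: in that boundary case both formulas yield zero, so the non-strict case distinction in the statement is internally consistent and harmless.
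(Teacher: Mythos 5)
Your proposal is correct and follows essentially the same route as the paper's own proof: substitute $f=0$ into Theorem~\ref{theo:fact-add-measurable}, then split on the sign of $\natexLP_j(h)$ using non-negative homogeneity and the conjugacy relation $\natexUP_i(g)=-\natexLP_i(-g)$. The only cosmetic difference is that the paper sources these two properties from Proposition~\ref{prop:propertiesofLP} rather than rederiving them from the desirability axioms, but the argument is the same.
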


\begin{corollary}[External additivity]\label{corol:add}
For any $f\in\gambleson{1}$ and $h\in\gambleson{2}$, we have that
\begin{equation*}
{(\lp_1\otimes\lp_2)(f+h)
=\natexLP_1(f)+\natexLP_2(h)}.
\vspace{2pt}
\end{equation*}
\end{corollary}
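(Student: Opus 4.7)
The plan is to derive Corollary~\ref{corol:add} directly from Theorem~\ref{theo:fact-add-measurable} by specialising to the constant gamble $g \equiv 1$. This requires two simple observations: first, that $g \equiv 1$ is $\B_1$-measurable for any $\B_1 \subseteq \nonemptypoweron{1}$; and second, that coherent lower previsions are constant-additive, so that we can pull the real number $\natexLP_2(h)$ out of $\natexLP_1$.

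First I would verify the $\B_1$-measurability of the constant gamble $1 \in \gamblesnonneg$ on $\states_1$. Looking at Definition~\ref{def:measurable:simple}, with $n=0$ and $c_0 = 1 \in \reals_{\geq 0}$, we see that $g = 1$ is already a simple $\B_1$-measurable gamble (no indicators needed). By Definition~\ref{def:measurable:uniform}, the constant sequence $\{1\}_{n \in \nats}$ is a trivial uniform limit witnessing that $g = 1$ is $\B_1$-measurable as well. (This holds for any choice of $\B_1$, so no assumption on the set of conditioning events is needed.)

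Next I would apply Theorem~\ref{theo:fact-add-measurable} with $i = 1$, $j = 2$, $f \in \gambleson{1}$, $h \in \gambleson{2}$, and $g = 1 \in \mathcal{G}_{\geq 0}(\states_1)$, which by the previous step is $\B_1$-measurable. Since $gh = h$, this yields
\begin{equation*}
(\lp_1 \otimes \lp_2)(f + h)
= \natexLP_1\bigl(f + \natexLP_2(h)\bigr).
\end{equation*}
The quantity $\natexLP_2(h)$ is a real number (by Proposition~\ref{prop:equivalentToPelessoniAndVicig}, applied to the coherent natural extension $\natexLP_2$ on $\C(\states_2)$ from Proposition~\ref{prop:naturalextension:full}), so $f + \natexLP_2(h)$ is simply a constant shift of the gamble $f$.

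Finally I would invoke constant additivity of the coherent conditional lower prevision $\natexLP_1$, i.e.\ $\natexLP_1(f + c) = \natexLP_1(f) + c$ for any $c \in \reals$, to rewrite the right-hand side as $\natexLP_1(f) + \natexLP_2(h)$. Constant additivity is a standard and immediate consequence of Williams-coherence via Proposition~\ref{prop:equivalentToPelessoniAndVicig} (or equivalently, can be read off from the characterisation of $\natexLP_1$ as $\lp_{\E(\lp_1)}$ together with the behaviour of $\E(\lp_1)$ under constant shifts). Combining the two displayed identities gives the claim. There is no real obstacle here: the corollary is essentially Theorem~\ref{theo:fact-add-measurable} stripped down to the trivial measurable multiplier $g \equiv 1$, with constant additivity of $\natexLP_1$ taking care of the cosmetic rewriting.
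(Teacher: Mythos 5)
Your proposal is correct and follows essentially the same route as the paper's own proof: specialise Theorem~\ref{theo:fact-add-measurable} to $g\coloneqq1$, note that this constant gamble is trivially $\B_1$-measurable, and finish with the constant additivity~\ref{def:lowerprev:constantadditivity} of $\natexLP_1$ guaranteed by its coherence (Proposition~\ref{prop:propertiesofLP}). The extra detail you supply on verifying measurability and the real-valuedness of $\natexLP_2(h)$ is fine but not a substantive deviation.
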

In each of these results, if the local domains $\C_1$ and $\C_2$ are sufficiently large---that is, if they include the gambles that appear in the statement of the results---it follows from Proposition~\ref{prop:naturalextension:full} that $\natexLP_i$ and $\natexLP_j$ can be replaced by $\lp_i$ and $\lp_j$, respectively, and similarly for $\natexUP_i$ and $\overline{P}_i$. 

That being said, let us now go back to the question of whether or not Equations~\eqref{eq:finitefactorisation} and~\eqref{eq:finiteexternaladditivity} can be generalised  to the case of infinite spaces. For the case of external additivity, it clearly follows from Corollary~\ref{corol:add} that the answer is fully positive.
Furthermore, this conclusion holds regardless of our choice for $\B_1$ and $\B_2$; they can even be empty. For factorisation, the answer does depend on $\B_1$ and $\B_2$. If we adopt epistemic subset-independence---that is, if we choose $\B_1=\nonemptypoweron{1}$ and $\B_2=\nonemptypoweron{2}$---it follows from Corollaries~\ref{corol:measurable:sufficient:allsets} and~\ref{corol:fact-measurable} that the answer is again fully positive, because $\nonemptypoweron{i}$-measurability then holds trivially. If $\B_1\cup\{\emptyset\}$ and $\B_2\cup\{\emptyset\}$ are sigma fields, the answer remains fairly positive as well, because Proposition~\ref{prop:measurability:equivalenceforsigmafield} then implies that it suffices for $g$ to be measurable in the usual, measure-theoretic sense. If we adopt epistemic value-independence---that is, if we choose $\B_1=\states_1$ and $\B_2=\states_2$---it is necessary for $g$ to be $\states_i$-measurable, which is a rather strong requirement that easily fails. For that reason, we think that for the case of infinite spaces, when it comes to choosing between epistemic value- and subset-independence, the latter should be preferred over the former. 

\section{Conclusions and Future Work}\label{sec:conclusions}

The main conclusion of this work is that by combining Williams-coherence with epistemic subset-independence, we obtain a notion of independent natural extension that always exists, and that furthermore satisfies factorisation and external additivity. For weaker types of epistemic independence, including epistemic value-irrelevance, the existence result and the external additivity property remain valid, but factorisation then requires measurability conditions.

We foresee several lines of future research. The first, which we expect to be rather straightforward, is to extend our results from the case of two variables to that of any finite number of variables. Next, these extended versions of our results could then be used to develop efficient algorithms for credal networks whose variables take values in infinite spaces, by suitably adapting existing algorithms for the finite case.  
On the more technical side, it would be useful to see whether our results can be extended to the case of unbounded functions. 
Finally, for variables that take values in Euclidean space, $\B_1$ and $\B_2$ could be restricted to the Lebesgue measurable events. Combined with an assessment of continuity, we think that this could lead to the development of a notion of independent natural extension that includes sigma additive product measures as a special case.









\acks{I am a Postdoctoral Fellow of the Research Foundation - Flanders (FWO) and wish to acknowledge its financial support.
The research that lead to this paper was conducted during a research visit---funded by an FWO travel grant---to the Imprecise Probability Group of IDSIA (Institute Dalle Molle for Artificial Intelligence), the members of which I would like to thank for their warm hospitality.
Finally, I would also like to thank two anonymous reviewers, for their generous constructive comments, and Enrique Miranda, for commenting on a preliminary version of this paper and for suggesting the idea of adopting a general notion of epistemic independence where $\B_1$ and $\B_2$ are allowed to be arbitrary.
}



\appendix

\section{Proofs and Additional Material}

\subsection{Proofs and Additional Material for Section~\ref{sec:prelim}}
\vspace{5pt}

\begin{lemma}\label{lemma:nestedpropsofposandE}
Let $\A_1$ and $\A_2$ be two subsets of $\gambles$ such that $\A_1\subseteq\A_2$. Then
\vspace{4pt}
\begin{equation*}
\posi(\A_1)\subseteq\posi(\A_2)
~\text{ and }~
\E(\A_1)\subseteq\E(\A_2).
\end{equation*}
\end{lemma}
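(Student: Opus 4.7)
The plan is to unfold the two definitions and observe that both operators are built out of $\subseteq$-preserving constructions, so monotonicity follows without any real work.

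First I would handle $\posi$. Take any $f \in \posi(\A_1)$. By Equation~\eqref{eq:posi}, there exist $n \in \nats$, $\lambda_1,\dots,\lambda_n \in \realspos$ and $f_1,\dots,f_n \in \A_1$ with $f = \sum_{i=1}^n \lambda_i f_i$. Since $\A_1 \subseteq \A_2$, each $f_i$ is also an element of $\A_2$, so the very same positive combination witnesses $f \in \posi(\A_2)$ through Equation~\eqref{eq:posi}. Hence $\posi(\A_1) \subseteq \posi(\A_2)$.

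Next I would deduce the second inclusion from the first. From $\A_1 \subseteq \A_2$ one immediately gets $\A_1 \cup \gamblespos \subseteq \A_2 \cup \gamblespos$. Applying the monotonicity of $\posi$ just established to these two sets yields
\begin{equation*}
\E(\A_1) = \posi(\A_1 \cup \gamblespos) \subseteq \posi(\A_2 \cup \gamblespos) = \E(\A_2),
\end{equation*}
where the equalities are the definition of $\E$ in Equation~\eqref{eq:natextop}.

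There is no real obstacle here: the lemma is a bookkeeping statement that the two operators $\posi$ and $\E$ are $\subseteq$-monotone, and the argument above is the whole story. The only thing to be careful about is citing the two definition equations~\eqref{eq:posi} and~\eqref{eq:natextop} in the right places, and noting that the second inclusion is a formal consequence of the first rather than an independent claim.
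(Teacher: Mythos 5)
Your proof is correct and matches the paper's argument, which simply notes that the lemma follows trivially from Equations~\eqref{eq:posi} and~\eqref{eq:natextop}; you have merely written out the same observation in full detail. Nothing is missing.
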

\begin{proof}{\bf of Lemma~\ref{lemma:nestedpropsofposandE}~}
This follows trivially from Equations~\eqref{eq:posi} and~\eqref{eq:natextop}.
\end{proof}
\vspace{-6pt}

\begin{proof}{\bf of Proposition~\ref{prop:measurability:equivalenceforsigmafield}~}
Consider any $\B\subseteq\nonemptypower$ such that $\B^*\coloneqq\B\cup\{\emptyset\}$ is a sigma field and fix some $g\in\gamblesnonneg$.

We first prove the `only if' part of the statement. So assume that $g$ is $\B^*$-measurable in the measure-theoretic sense~\cite[Definition~10.1]{Nielsen1997}. It then follows from~\cite[Corollary~10.5]{Nielsen1997} that $\{x\in\states\colon g(x)\geq r\}\in\B^*=\B\cup\{\emptyset\}$ for all $r\in\rationals_{\geq0}$. Therefore, it follows from Proposition~\ref{prop:measurable:sufficient:general} that $g$ is $\B$-measurable in the sense of Definition~\ref{def:measurable:uniform}.

We end by proving the `if' part of the statement. So assume that $g$ is $\B$-measurable in the sense of Definition~\ref{def:measurable:uniform}. This means that there is a sequence $\{g_n\}_{n\in\nats}$ of simple $\B$-measurable gambles in $\mathcal{G}_{\geq0}(\states)$ such that $\lim_{n\to+\infty}\sup\abs{g-g_n}=0$. Then on the one hand, since $\lim_{n\to+\infty}\sup\abs{g-g_n}=0$ implies that $\lim_{n\to+\infty}\abs{g(x)-g_n(x)}=0$ for all $x\in\states$, we know that $\{g_n\}_{n\in\nats}$ converges pointwise to $g$ on $\states$. On the other hand, for any $n\in\nats$, we know from Definition~\ref{def:measurable:simple} that there are $c_0\in\reals_{\geq0}$, $m\in\natswith$ and, for all $i\in\{1,\dots,m\}$, $c_i\in\reals_{\geq0}$ and $B_i\in\B$, such that $g=c_0+\sum_{i=1}^mc_i\ind{B_i}$. Let $B_0=\states$. Since $\ind{\states}=1$, and because $\B^*$ is a sigma field and therefore includes $\states$, we then find that $g=\sum_{i=0}^mc_i\ind{B_i}$, where, for all $i\in\{0,\dots,n\}$, $B_i\in\B^*$. \cite[Example~10.2]{Nielsen1997} therefore implies that $g_n$ is a $\B^*$-measurable function in the measure-theoretic sense. Since this is true for every $n\in\nats$, and because $\{g_n\}_{n\in\nats}$ converges pointwise to $g$ on $\states$, it now follows from~\cite[Corollary~10.11(a)]{Nielsen1997} that $g$ is $\B^*$-measurable in the measure-theoretic sense.
\end{proof}
\vspace{-6pt}

\begin{proof}{\bf of Proposition~\ref{prop:measurable:sufficient:general}~}
Since $g\geq0$ is a gamble and therefore by definition bounded, there is some $\alpha\in\rationals_{>0}$ such that $0\leq g<\alpha$. Fix any $n\in\nats$ and let $g_n\in\gambles$ be defined by
\begin{equation*}
g_n\coloneqq\frac{1}{n}\alpha\sum_{k=1}^{n-1}\ind{A_k},
~\text{where, for all $k\in\{1,\dots,n-1\}$, }A_k\coloneqq\Big\{x\in\states\colon g(x)\geq\frac{k}{n}\alpha\Big\}.
\vspace{2pt}
\end{equation*}
For all $x\in\states$, we then find that
\vspace{3pt}
\begin{equation*}
g_n(x)
=
\frac{k_x}{n}\alpha
\leq
g(x)
\leq
\frac{k_x+1}{n}\alpha,
\text{~where we let~}
k_x\coloneqq\max\{k\in\{0,\dots,n-1\}\colon g(x)\geq\frac{k}{n}\alpha\},
\vspace{2pt}
\end{equation*}
which implies that $\abs{g(x)-g_n(x)}\leq\nicefrac{\alpha}{n}$. Since this is true for every $x\in\states$, this allows us to infer that $\sup\abs{g-g_n}\leq\nicefrac{\alpha}{n}$.

Consider now any $k\in\{1,\dots,n-1\}$. Since $\nicefrac{k}{n}\alpha\in\rationals_{\geq0}$, it follows from our assumptions on $g$ that $A_k$ is a finite union of pairwise disjoint events in $\B\cup\{\states,\emptyset\}$. Therefore, there is some $m_k\in\nats$ and, for all $i\in\{1,\dots,m_k\}$, some $B_{k,i}\in\B\cup\{\states,\emptyset\}$ such that $\ind{A_k}=\sum_{i=1}^{m_k}\ind{B_{k,i}}$. Since this is true for every $k\in\{1,\dots,n-1\}$, it follows that $g_n=\nicefrac{\alpha}{n}\sum_{k=1}^{n-1}\sum_{i=1}^{m_k}\ind{B_{k,i}}$. Since $g_n$ is clearly non-negative, and because $\ind{\states}=1$ and $\ind{\emptyset}=0$, it now follows from Definition~\ref{def:measurable:simple} that $g_n\in\gamblesnonneg$ is a simple $\B$-measurable gamble.

So, in summary then, for any fixed $n\in\nats$, we know that we can construct a simple $\B$-measurable gamble $g_n\in\gamblesnonneg$ such that $\sup\abs{g-g_n}\leq\nicefrac{\alpha}{n}$. Definition~\ref{def:measurable:uniform} therefore clearly implies that $g$ is $\B$-measurable.
\end{proof}
\vspace{-6pt}

\begin{proof}{\bf of Corollary~\ref{corol:measurable:sufficient:allsets}~}
Immediate consequence of Proposition~\ref{prop:measurable:sufficient:general}.
\end{proof}

\vspace{-16pt}

\subsection{Proofs and Additional Material for Section~\ref{sec:modellinguncertainty}}\label{app:modellinguncertainty}
\vspace{5pt}


Contrary to what the length of this section of the appendix might suggest, it should be noted that many of the results in this section are essentially well-known.
Historically, most of them date back to~\cite{williams1975,Williams:2007eu}. Our versions are basically just minor variations of his results, expressed in terms of lower previsions---instead of upper previsions---and without imposing structural constraints on the domain. Similar results can also be found in~\citep{Pelessoni:2009co}, although often without proof.



\begin{lemma}\label{lemma:coherenceiffLP4}
For any $\A\subseteq\gambles$, $\E(\A)$ is a coherent set of desirable gambles on $\states$ if and only if it satisfies~\ref{def:SDG:partialloss}.
\end{lemma}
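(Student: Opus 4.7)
The plan is to exploit the fact that the operator $\E(\cdot)=\posi(\,\cdot\,\cup\gamblespos)$ is constructed precisely so that axioms \ref{def:SDG:partialgain}--\ref{def:SDG:convex} are automatically baked in, leaving only \ref{def:SDG:partialloss} as a nontrivial requirement. The ``only if'' direction is then immediate: if $\E(\A)$ is coherent, it satisfies every coherence axiom by definition, including \ref{def:SDG:partialloss}.

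For the ``if'' direction, I would assume that $\E(\A)$ satisfies \ref{def:SDG:partialloss} and verify \ref{def:SDG:partialgain}, \ref{def:SDG:homo} and \ref{def:SDG:convex} directly from the definition of $\posi$ in Equation~\eqref{eq:posi} and of $\E$ in Equation~\eqref{eq:natextop}. For \ref{def:SDG:partialgain}, any $f\in\gambles$ with $f\geq 0$ and $f\neq 0$ lies in $\gamblespos\subseteq\A\cup\gamblespos$, hence $f=1\cdot f\in\posi(\A\cup\gamblespos)=\E(\A)$. For \ref{def:SDG:homo}, any $f\in\E(\A)$ has a representation $f=\sum_{i=1}^n\lambda_i f_i$ with $n\in\nats$, $\lambda_i\in\realspos$ and $f_i\in\A\cup\gamblespos$; for any $\lambda\in\realspos$, we then have $\lambda f=\sum_{i=1}^n(\lambda\lambda_i)f_i$ with $\lambda\lambda_i\in\realspos$, so $\lambda f\in\E(\A)$. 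For \ref{def:SDG:convex}, given $f,g\in\E(\A)$ with representations $f=\sum_{i=1}^n\lambda_if_i$ and $g=\sum_{j=1}^m\mu_jg_j$ as above, the concatenated sum $f+g$ is again a positive linear combination of elements of $\A\cup\gamblespos$, so $f+g\in\E(\A)$.

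The main ``obstacle'' is essentially nothing: the result is a bookkeeping observation about the $\posi$-construction, and the content of the lemma is precisely that \ref{def:SDG:partialloss} is the only axiom one ever needs to check when testing $\E(\A)$ for coherence. This is what makes the lemma useful as a tool later on, since it reduces coherence of naturally extended models to a single avoidance-of-loss condition.
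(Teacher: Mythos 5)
Your proposal is correct and follows the same route as the paper, which likewise observes that Equation~\eqref{eq:natextop} forces $\E(\A)$ to satisfy \ref{def:SDG:partialgain}--\ref{def:SDG:convex} automatically, so that only \ref{def:SDG:partialloss} remains to be checked. You merely spell out the routine verifications that the paper leaves implicit.
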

\begin{proof}{\bf of Lemma~\ref{lemma:coherenceiffLP4}~}
Since Equation~\eqref{eq:natextop} implies that $\E(\A)$ satisfies~\ref{def:SDG:partialgain},~\ref{def:SDG:homo} and~\ref{def:SDG:convex}, this follows trivially from Definition~\ref{def:SDG}.
\end{proof}
\vspace{-16pt}

\begin{lemma}\label{lemma:natextDisD}
Let $\desir$ be a coherent set of desirable gambles on $\states$. Then $\E(\desir)=\desir$.
\end{lemma}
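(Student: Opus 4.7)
The plan is to prove the two inclusions $\desir \subseteq \E(\desir)$ and $\E(\desir) \subseteq \desir$ separately, with both being essentially immediate from the axioms~\ref{def:SDG:partialgain}--\ref{def:SDG:convex} combined with the definitions of $\posi$ and $\E$ in Equations~\eqref{eq:posi} and~\eqref{eq:natextop}.

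For $\desir \subseteq \E(\desir)$, I would simply note that any $f \in \desir$ can be written as $1 \cdot f$ with $1 \in \realspos$ and $f \in \desir \cup \gamblespos$. Hence $f \in \posi(\desir \cup \gamblespos) = \E(\desir)$.

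For the converse inclusion $\E(\desir) \subseteq \desir$, I would take an arbitrary $f \in \E(\desir) = \posi(\desir \cup \gamblespos)$ and use Equation~\eqref{eq:posi} to write $f = \sum_{i=1}^n \lambda_i f_i$ with $n \in \nats$, each $\lambda_i \in \realspos$ and each $f_i \in \desir \cup \gamblespos$. For each individual term $\lambda_i f_i$, I would argue case by case: if $f_i \in \desir$, then $\lambda_i f_i \in \desir$ by~\ref{def:SDG:homo}; otherwise $f_i \in \gamblespos$, so $\lambda_i f_i$ is still a non-negative non-zero gamble, and~\ref{def:SDG:partialgain} yields $\lambda_i f_i \in \desir$. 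A straightforward induction on $n$ using~\ref{def:SDG:convex} then shows that the sum $f = \sum_{i=1}^n \lambda_i f_i$ lies in $\desir$.

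I do not foresee any real obstacle: the result is a routine verification that the operator $\E$, which was designed to generate the smallest coherent superset of a given set of gambles, acts as the identity on sets that are already coherent. The only mild subtlety is making sure the case split on $f_i \in \desir$ versus $f_i \in \gamblespos$ is handled cleanly, which is why I would explicitly invoke~\ref{def:SDG:partialgain} and~\ref{def:SDG:homo} separately rather than trying to merge them.
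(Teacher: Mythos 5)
Your proof is correct and follows the same route as the paper, which simply asserts that $\desir\subseteq\E(\desir)$ holds trivially and that $\E(\desir)\subseteq\desir$ is a straightforward consequence of coherence; you have merely filled in the routine details via~\ref{def:SDG:partialgain}--\ref{def:SDG:convex}. No issues.
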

\begin{proof}{\bf of Lemma~\ref{lemma:natextDisD}~}
$\desir$ is trivially a subset of $\E(\desir)$. The converse inclusion, that is, $\E(\desir)\subseteq\desir$, is a straightforward consequence of the coherence of $\desir$.
\end{proof}
\vspace{-16pt}

\begin{lemma}\label{lemma:extendD}
Let $\desir$ be a coherent set of desirable gambles on $\states$. If $f\in\gambles$ and $f\notin\desir\cup\{0\}$, then $\E(\desir\cup\{-f\})$ is a coherent set of desirable gambles on $\states$.
\end{lemma}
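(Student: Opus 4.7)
The plan is to prove this by contradiction, exploiting Lemma~\ref{lemma:coherenceiffLP4}: since $\E(\desir\cup\{-f\})$ automatically satisfies \ref{def:SDG:partialgain}--\ref{def:SDG:convex}, it suffices to verify \ref{def:SDG:partialloss}, i.e.\ that no non-positive gamble belongs to it. So I would assume, towards a contradiction, that some $g\in\gambles$ with $g\leq0$ lies in $\E(\desir\cup\{-f\})=\posi((\desir\cup\{-f\})\cup\gamblespos)$. By~\eqref{eq:posi}, $g=\sum_{i=1}^n\lambda_ih_i$ with $n\in\nats$, $\lambda_i\in\realspos$ and $h_i\in\desir\cup\{-f\}\cup\gamblespos$.

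Next I would group the summands: collect the (possibly zero) total coefficient on the $-f$ copies into $\lambda_0\in\realsnonneg$ and let $h$ be the sum of the remaining summands, so that $g=-\lambda_0 f+h$, where $h$ is either absent or an element of $\posi(\desir\cup\gamblespos)=\E(\desir)$. By Lemma~\ref{lemma:natextDisD}, whenever $h$ is present we have $h\in\E(\desir)=\desir$.

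The first case is $\lambda_0=0$: then $g=h\in\desir$ with $g\leq0$, contradicting \ref{def:SDG:partialloss} applied to the coherent set $\desir$. The second case is $\lambda_0>0$. If the remaining sum is empty, then $g=-\lambda_0f\leq0$ gives $f\geq0$, and either $f=0$ (contradicting $f\notin\{0\}$) or $f\in\gamblespos\subseteq\desir$ by \ref{def:SDG:partialgain} (contradicting $f\notin\desir$). Otherwise $h\in\desir$ and $g\leq0$ yields $\lambda_0f-h\geq0$; if $\lambda_0f-h=0$ then $\lambda_0f=h\in\desir$, and if $\lambda_0f-h\neq 0$ then $\lambda_0f-h\in\gamblespos\subseteq\desir$ by \ref{def:SDG:partialgain}, so by \ref{def:SDG:convex} $\lambda_0f=(\lambda_0f-h)+h\in\desir$. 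Either way \ref{def:SDG:homo} applied with the scalar $1/\lambda_0$ gives $f\in\desir$, contradicting $f\notin\desir\cup\{0\}$.

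The argument is essentially bookkeeping; the only delicate point is remembering that the $\posi$ operator requires $n\geq1$, so after pulling out the $-f$ contribution the remaining sum may genuinely be empty. That degenerate subcase is precisely where both parts of the hypothesis $f\notin\desir\cup\{0\}$ are needed: the $f=0$ branch is ruled out by $f\notin\{0\}$, and the $f\geq 0$, $f\neq 0$ branch by $f\notin\desir$. Once this edge case is handled, the remaining deductions are routine applications of \ref{def:SDG:partialgain}--\ref{def:SDG:convex} together with Lemma~\ref{lemma:natextDisD}.
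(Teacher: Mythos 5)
Your proof is correct and follows essentially the same route as the paper's: reduce to verifying \ref{def:SDG:partialloss} via Lemma~\ref{lemma:coherenceiffLP4}, decompose a hypothetical non-positive element of $\E(\desir\cup\{-f\})$ as $-\lambda_0 f$ plus an element of $\desir$ (absorbing $\gamblespos$ into $\desir$), and derive $f\in\desir$ or $f=0$ in the case $\lambda_0>0$. Your explicit handling of the empty-remainder subcase is the same degenerate case the paper covers with its ``$\lambda=0$ and $g=0$'' parenthetical.
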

\begin{proof}{\bf of Lemma~\ref{lemma:extendD}~}
Consider any $f\in\gambles$ such that $f\notin\desir\cup\{0\}$. Because of Lemma~\ref{lemma:coherenceiffLP4}, it suffices to prove that $\E(\desir\cup\{-f\})$ satisfies \ref{def:SDG:partialloss}. So consider any $g\in\gambles$ such that $g\leq0$. In the remainder of this proof, we show that $g\notin\E(\desir\cup\{-f\})$. 

Assume \emph{ex absurdo} that $g\in\E(\desir\cup\{-f\})$. Since $\desir$ is coherent, this implies that $g=\lambda h-\mu f$, with $h\in\desir$, $\lambda,\mu\in\realsnonneg$ and $\lambda+\mu>0$. If $\mu=0$, then because $h\in\desir$, the coherence of $\desir$ implies that $g=\lambda h\in\desir$, which implies that $\desir$ does not satisfy \ref{def:SDG:partialloss}, a contradiction. Hence, it follows that $\mu>0$, which implies that $f=\nicefrac{1}{\mu}(\lambda h-g)$. Therefore, since $h\in\desir$ and $-g\geq0$, it follows from the coherence of $\desir$ that $f=0$ (if $\lambda=0$ and $g=0$) or $f\in\desir$. In both cases, we contradict our assumptions.
\end{proof}
\vspace{-6pt}

\begin{proof}{\bf of Proposition~\ref{prop:equivalentToPelessoniAndVicig}~} Consider any conditional lower prevision $\lp$ on $\C\subseteq\C(\states)$. 

We start by proving the `only if' part of the statement. So let us assume that $\lp$ is coherent. According to Definition~\ref{def:cohlp}, this implies that there is a coherent set of desirable gambles $\desir$ on $\states$ such that $\lp_\desir$ coincides with $\lp$ on $\C$. We need prove that $\lp$ is real-valued and that it satisfies Equation~\eqref{eq:prop:equivalentToPelessoniAndVicig}.

We begin by establishing that $\lp$ is real-valued. So fix any $(f,B)\in\C$. For all $\mu\in\reals$ such that $\mu<\inf f$, it then follows from the coherence of $\desir$---and~\ref{def:SDG:partialgain} in particular---that $[f-\mu]\ind{B}\in\desir$.  Similarly, for all $\mu\in\reals$ such that $\mu>\sup f$, it follows from~\ref{def:SDG:partialloss} that $[f-\mu]\ind{B}\notin\desir$. Hence, we find that $\inf f\leq\lp_\desir(f\vert B)\leq\sup f$. Since $f$ is a gamble and therefore by definition bounded, this implies that $\lp_\desir(f\vert B)$ is real-valued, which in turn implies that $\lp(f\vert B)$ is real-valued because $\lp_\desir$ coincides with $\lp$ on $\C$. Since $(f,B)\in\C$ was arbitrary, this means that $\lp$ is real-valued. 

Next, we show that $\lp$ satisfies Equation~\eqref{eq:prop:equivalentToPelessoniAndVicig}. Fix any $n\in\natswith$, choose any $\lambda_0,\dots,\lambda_n\in\reals_{\geq0}$ and $(f_0,B_0),\dots,(f_n,B_n)\in\mathcal{C}$, let $B\coloneqq\cup_{i=0}^nB_i$ and let $h\in\gambles$ be defined by
\begin{equation*}
h(x)
\coloneqq\sum_{i=1}^n
\lambda_i\ind{B_i}(x)
[f_i(x)-\lp(f_i\vert B_i)]
-\lambda_0\ind{B_0}(x)
[f_0(x)-\lp(f_0\vert B_0)]
~~\text{for all $x\in\states$}.
\end{equation*}
We need to prove that $\sup_{x\in B}h(x)\geq0$. In order to do that, we start by fixing some $\epsilon>0$. Let $\epsilon_0\coloneqq\epsilon$. Since $\lp_\desir$ coincides with $\lp$ on $\C$, it then follows from Equation~\eqref{eq:LPfromD} that
\begin{equation}\label{eq:prop:equivalentToPelessoniAndVicig:proof:1}
g_0\coloneqq[f_0-\lp(f_0\vert B_0)-\epsilon_0]\ind{B_0}=[f_0-\lp_\desir(f_0\vert B_0)-\epsilon_0]\ind{B_0}\notin\desir.
\end{equation}
Similarly, for all $i\in\{1,\dots,n\}$, Equation~\eqref{eq:LPfromD} implies that there is some $\epsilon_i\geq0$ such that $\epsilon_i\leq\epsilon$ and
\begin{equation}\label{eq:prop:equivalentToPelessoniAndVicig:proof:2}
g_i\coloneqq[f_i-\lp(f_i\vert B_i)+\epsilon_i]\ind{B_i}=[f_i-\lp_\desir(f_i\vert B_i)+\epsilon_i]\ind{B_i}\in\desir.
\end{equation}
Now let $g\coloneqq\lambda_0g_0-\sum_{i=1}^n\lambda_ig_i$ and assume \emph{ex absurdo} that $g\in\desir$. Since $\desir$ is coherent and therefore satisfies~\ref{def:SDG:homo} and~\ref{def:SDG:convex}, it then follows from Equation~\eqref{eq:prop:equivalentToPelessoniAndVicig:proof:2} that
\begin{equation*}
\lambda_0g_0
=
g+\sum_{i=1}^n\lambda_ig_i
=
g+\sum_{\substack{i=1\\\lambda_i\neq0}}^n\lambda_ig_i\in\desir.
\end{equation*}
If $\lambda_0=0$, this implies that $0\in\desir$, which contradicts~\ref{def:SDG:partialloss}. If $\lambda_0>0$, this implies that $g_0\in\desir$ because of~\ref{def:SDG:homo}, which contradicts Equation~\eqref{eq:prop:equivalentToPelessoniAndVicig:proof:1}. Since both cases lead to a contradiction, we conclude that $g\notin\desir$. Since the coherence of $\desir$ implies that $\gamblespos\subseteq\desir$, this allows us to infer that $g\notin\gamblespos$. 
Since $g(x)=0$ for all $x\in\states\setminus B$, this implies that $\inf_{x\in B}g(x)\leq0$. Hence, we find that
\begin{align*}
0\leq
-\inf_{x\in B}g(x)
=\sup_{x\in B}-g(x)
&=\sup_{x\in B}\big(h(x)+\sum_{i=0}^n\lambda_i\epsilon_i\ind{B_i}(x)\big)\\
&\leq\sup_{x\in B}h(x)+\sup_{x\in B}\big(\sum_{i=0}^n\lambda_i\epsilon_i\ind{B_i}(x)\big)
\leq\sup_{x\in B}h(x)+\sum_{i=0}^n\lambda_i\epsilon_i,\vspace{-3pt}
\end{align*}
which implies that
\vspace{-3pt}
\begin{equation*}
\sup_{x\in B}h(x)\geq-\sum_{i=0}^n\lambda_i\epsilon_i\geq-\epsilon\sum_{i=0}^n\lambda_i.
\vspace{8pt}
\end{equation*}
Since this is true for every $\epsilon>0$, it follows that $\sup_{x\in B}h(x)\geq0$, as desired.

It remains to prove the `if' part of the statement. So let us assume that $\lp$ is real-valued and that it satisfies Equation~\eqref{eq:prop:equivalentToPelessoniAndVicig}. We need to prove that $\lp$ is coherent.

Let $\A_{\lp}$ and $\E(\lp)$ be defined by Equation~\eqref{eq:AfromLP}. We start by proving that $\E(\lp)$ is a coherent set of desirable gambles on $\states$. Fix any $f\in\E(\lp)$. We then know from Equations~\eqref{eq:posi},~\eqref{eq:natextop} and~\eqref{eq:AfromLP} that\vspace{-4pt}
\begin{equation}
f=\sum_{i=1}^n\lambda_i\ind{B_i}[f_i-\mu_i]+\sum_{j=n+1}^m\lambda_jf_j\geq\sum_{i=1}^n\lambda_i\ind{B_i}[f_i-\mu_i]
\label{eq:prop:equivalentToPelessoniAndVicig:proof:3}
\end{equation}
for some $n\in\natswith$ and $m\in\nats$ such that $n\leq m$, with $\lambda_1,\dots,\lambda_m\in\reals_{>0}$, $f_{n+1},\dots,f_m\in\gamblespos$, and $(f_1,B_1),\dots,(f_n,B_n)\in\C$ and $\mu_1,\dots,\mu_n\in\reals$ such that $\mu_i<\lp(f_i\vert B_i)$ for all $i\in\{1,\dots,n\}$. 
We consider two cases: $f\in\gamblespos$ and $f\notin\gamblespos$. 
If $f\in\gamblespos$, then $f\not\leq0$. If $f\notin\gamblespos$, then $n\neq0$. Therefore, if we let $A\coloneqq\cup_{i=1}^nB_i\neq\emptyset$, it follows from Equations~\eqref{eq:prop:equivalentToPelessoniAndVicig:proof:3} and~\eqref{eq:prop:equivalentToPelessoniAndVicig} that
\begin{align*}
\sup_{x\in A}f(x)
&\geq\sup_{x\in A}\Big(\sum_{i=1}^n\lambda_i\ind{B_i}(x)[f_i(x)-\mu_i]\Big)\\
&=\sup_{x\in A}\Big(\sum_{i=1}^n\lambda_i\ind{B_i}(x)[f_i(x)-\lp(f_i\vert B_i)]+\sum_{i=1}^n\lambda_i\ind{B_i}(x)[\lp(f_i\vert B_i)-\mu_i]\Big)\\
&\geq\sup_{x\in A}\Big(\sum_{i=1}^n\lambda_i\ind{B_i}(x)[f_i(x)-\lp(f_i\vert B_i)]\Big)+\inf_{x\in A}\Big(\sum_{i=1}^n\lambda_i\ind{B_i}(x)[\lp(f_i\vert B_i)-\mu_i]\Big)\\
&\geq\inf_{x\in A}\Big(\sum_{i=1}^n\lambda_i\ind{B_i}(x)[\lp(f_i\vert B_i)-\mu_i]\Big)
\geq\min_{1\leq i\leq n}\lambda_i[\lp(f_i\vert B_i)-\mu_i]>0,\\[-13pt]
\end{align*}
which implies that $f\not\leq0$. Hence, in both cases, we find that $f\not\leq0$. Since $f\in\E(\lp)$ is arbitrary, this implies that $\E(\lp)$ satisfies~\ref{def:SDG:partialloss}. Since $\E(\lp)\coloneqq\E(\A_{\lp})$, it now follows from Lemma~\ref{lemma:coherenceiffLP4} that $\E(\lp)$ is a coherent set of desirable gambles on $\states$.

In the remainder of this proof, we will show that $\lp_{\E(\lp)}$ coincides with $\lp$ on $\C$. Since $\E(\lp)$ is a coherent set of desirable gambles on $\states$, Definition~\ref{def:cohlp} then implies that $\lp$ is coherent, as desired. So fix any $(f,B)\in\C$. We need to prove that $\lp(f\vert B)=\lp_{\E(\lp)}(f\vert B)$. However, since Equation~\eqref{eq:AfromLP} implies that $[f-\mu]\ind{B}\in\E(\lp)$ for all $\mu<\lp(f\vert B)$, it follows trivially from Equation~\eqref{eq:LPfromD} that $\lp(f\vert B)\leq\lp_{\E(\lp)}(f\vert B)$. Therefore, it remains to prove that $\lp(f\vert B)\geq\lp_{\E(\lp)}(f\vert B)$.

Consider any $\mu\in\reals$ such that $[f-\mu]\ind{B}\in\E(\lp)$. We then know from Equations~\eqref{eq:posi},~\eqref{eq:natextop} and~\eqref{eq:AfromLP} that\vspace{-4pt}
\begin{equation}
[f-\mu]\ind{B}=\sum_{i=1}^n\lambda_i\ind{B_i}[f_i-\mu_i]+\sum_{j=n+1}^m\lambda_jf_j\geq\sum_{i=1}^n\lambda_i\ind{B_i}[f_i-\mu_i]
\label{eq:prop:equivalentToPelessoniAndVicig:proof:4}
\end{equation}
for some $n\in\natswith$ and $m\in\nats$ such that $n\leq m$, with $\lambda_1,\dots,\lambda_m\in\reals_{>0}$, $f_{n+1},\dots,f_m\in\gamblespos$, and $(f_1,B_1),\dots,(f_n,B_n)\in\C$ and $\mu_1,\dots,\mu_n\in\reals$ such that $\mu_i<\lp(f_i\vert B_i)$ for all $i\in\{1,\dots,n\}$. 
Therefore, if we let $A\coloneqq B\cup\big(\cup_{i=1}^nB_i\big)\neq\emptyset$, we find that
\begin{align}
&\sup_{x\in A}\Big(\sum_{i=1}^n\lambda_i\ind{B_i}(x)[\mu_i-\lp(f_i\vert B_i)]-\ind{B}(x)[\mu-\lp(f\vert B)]\Big)\notag\\
&\geq
\sup_{x\in A}\Big(\sum_{i=1}^n\lambda_i\ind{B_i}(x)[\mu_i-\lp(f_i\vert B_i)]-\ind{B}(x)[\mu-\lp(f\vert B)]
+
\sum_{i=1}^n\lambda_i\ind{B_i}(x)[f_i(x)-\mu_i]-\ind{B}(x)[f(x)-\mu]
\Big)\notag\\
&=
\sup_{x\in A}\Big(\sum_{i=1}^n\lambda_i\ind{B_i}(x)[f_i(x)-\lp(f_i\vert B_i)]
-\ind{B}(x)[f(x)-\lp(f\vert B)]\Big)\geq0\label{eq:prop:equivalentToPelessoniAndVicig:proof:5}\\[-14pt]\notag
\end{align}
where the first inequality follows from Equation~\eqref{eq:prop:equivalentToPelessoniAndVicig:proof:4} and the last inequality follows from Equation~\eqref{eq:prop:equivalentToPelessoniAndVicig}. 
Since $\lambda_i>0$ and $\mu_i-\lp(f_i\vert B_i)<0$, this implies that $\mu\leq\lp(f\vert B)$. Since this true for every $\mu\in\reals$ such that $[f-\mu]\ind{B}\in\E(\lp)$, it follows from Equation~\eqref{eq:LPfromD} that $\lp_{\E(\lp)}(f\vert B)\leq\lp(f\vert B)$.
\end{proof}

\begin{proof}{\bf of Proposition~\ref{prop:smallestSDGfromLP}~}
Consider any coherent set of desirable gambles $\desir$ on $\states$ such that $\lp_\desir$ coincides with $\lp$ on $\C$. Since $\lp$ is coherent, we know from Definition~\ref{def:cohlp} that there is at least one such set $\desir$. We start by proving that $\E(\lp)\subseteq\desir$.

Fix any $(f,B)\in\C$ and any $\mu<\lp(f\vert B)$. Since $\lp_\desir(f\vert B)=\lp(f\vert B)$, we know that $\mu<\lp_\desir(f\vert B)$, and therefore, it follows from Equation~\eqref{eq:LPfromD} that there is some $\mu^*\in\reals$ such that $[f-\mu^*]\ind{B}\in\desir$ and $\mu<\mu^*\leq\lp_\desir(f\vert B)$. Furthermore, since $\mu^*>\mu$ and $B\neq\emptyset$, we also know that $[\mu^*-\mu]\ind{B}\in\mathcal{G}_{>0}(\states)$, which implies that $[\mu^*-\mu]\ind{B}\in\desir$ because of~\ref{def:SDG:partialgain}. Since $[f-\mu^*]\ind{B}\in\desir$ and $[\mu^*-\mu]\ind{B}\in\desir$, it now follows from~\ref{def:SDG:convex} that $[f-\mu]\ind{B}=[f-\mu^*]\ind{B}+[\mu^*-\mu]\ind{B}\in\desir$. Since this is true for every $(f,B)\in\C$ and $\mu<\lp(f\vert B)$, we infer that $\A_{\lp}\subseteq\desir$, and therefore, because of Lemmas~\ref{lemma:nestedpropsofposandE} and~\ref{lemma:natextDisD}, that $\E(\lp)=\E(\A_{\lp})\subseteq\E(\desir)=\desir$.

Next, since $\desir$ is coherent and $\E(\lp)\subseteq\desir$, it follows from Definition~\ref{def:SDG} that $\E(\lp)$ satisfies~\ref{def:SDG:partialloss}. Therefore, and because $\E(\lp)=\E(\A_{\lp})$, it follows from Lemma~\ref{lemma:coherenceiffLP4} that $\E(\lp)$ is a coherent set of desirable gambles on $\states$. Hence, it remains to prove that $\lp_{\E(\lp)}$ coincides with $\lp$ on $\C$.

Fix any $(f,B)\in\C$. Then on the one hand, since $\E(\lp)\subseteq\desir$, we have that
\begin{equation*}
\lp_{\E(\lp)}(f\vert B)\leq\lp_{\desir}(f\vert B)=\lp(f\vert B).
\end{equation*}
On the other hand, since we know from Equation~\eqref{eq:AfromLP} that $[f-\mu]\ind{B}\in\E(\lp)$ for all $\mu<\lp(f\vert B)$, it follows from Equation~\eqref{eq:LPfromD} that $\lp_{\E(\lp)}(f\vert B)\geq\lp(f\vert B)$. Hence, we find that $\lp_{\E(\lp)}(f\vert B)=\lp(f\vert B)$. Since $(f,B)\in\C$ is arbitrary, this implies that $\lp_{\E(\lp)}$ coincides with $\lp$ on $\C$.
\end{proof}
\vspace{-16pt}

\begin{proposition}\label{prop:naturalextension}
Let $\lp$ be a coherent conditional lower prevision on $\C\subseteq\C(\states)$. Then for any $\C'\subseteq\C(\states)$ such that $\C\subseteq\C'$, the restriction of $\natexLP$ to $\C'$ is the pointwise smallest coherent conditional lower prevision on $\C'$ that coincides with $\lp$ on $\C$.
\end{proposition}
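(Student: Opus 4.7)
The plan is to prove this as a nearly mechanical generalisation of Proposition~\ref{prop:naturalextension:full}, since the latter is precisely the case $\C'=\C(\states)$. Fix $\lp$ coherent on $\C$ and fix any $\C'$ with $\C\subseteq\C'\subseteq\C(\states)$. Let $Q$ denote the restriction of $\natexLP$ to $\C'$. I need to check three things: (i) $Q$ is coherent on $\C'$; (ii) $Q$ coincides with $\lp$ on $\C$; (iii) for every coherent conditional lower prevision $Q'$ on $\C'$ that coincides with $\lp$ on $\C$, one has $Q\leq Q'$ pointwise on $\C'$.

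For (i), Equation~\eqref{eq:naturalextension} says that $\natexLP=\lp_{\E(\lp)}$ on $\C(\states)$, so in particular $Q$ coincides with $\lp_{\E(\lp)}$ on $\C'$. By Proposition~\ref{prop:smallestSDGfromLP}, $\E(\lp)$ is a coherent set of desirable gambles on $\states$, so Definition~\ref{def:cohlp} directly yields the coherence of $Q$ on $\C'$. For (ii), Proposition~\ref{prop:smallestSDGfromLP} also gives that $\lp_{\E(\lp)}$ coincides with $\lp$ on $\C$, and since $\C\subseteq\C'$ this immediately transfers to $Q$.

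For (iii), fix such a $Q'$. By Definition~\ref{def:cohlp}, there is a coherent set of desirable gambles $\desir$ on $\states$ such that $Q'$ coincides with $\lp_\desir$ on $\C'$; in particular, because $\C\subseteq\C'$ and $Q'$ coincides with $\lp$ on $\C$, $\lp_\desir$ coincides with $\lp$ on $\C$ as well. The minimality part of Proposition~\ref{prop:smallestSDGfromLP} then forces $\E(\lp)\subseteq\desir$. From the defining formula~\eqref{eq:LPfromD}, the map $\desir\mapsto\lp_\desir(f\vert B)$ is monotone in $\desir$ (a larger set of desirable gambles enlarges the set $\{\mu\in\reals\colon[f-\mu]\ind{B}\in\desir\}$ whose supremum we take), so $\lp_{\E(\lp)}(f\vert B)\leq\lp_\desir(f\vert B)$ for every $(f,B)\in\C(\states)$. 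Restricting to $\C'$ gives $Q(f\vert B)=\lp_{\E(\lp)}(f\vert B)\leq\lp_\desir(f\vert B)=Q'(f\vert B)$ for all $(f,B)\in\C'$, as required.

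There is essentially no obstacle here: all of the technical work—namely producing the coherent set of desirable gambles $\E(\lp)$, verifying its coherence, showing that it induces $\lp$ on $\C$, and proving that it is the smallest such set—has already been carried out in Proposition~\ref{prop:smallestSDGfromLP}. The present proposition is merely the observation that translating that statement back into the language of conditional lower previsions via~\eqref{eq:LPfromD}, and then restricting to any intermediate domain $\C'$, preserves the smallest-coherent-extension property.
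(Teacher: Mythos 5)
Your proposal is correct and follows essentially the same route as the paper's own proof: both establish coherence of the restriction via Definition~\ref{def:cohlp}, agreement with $\lp$ on $\C$ via Proposition~\ref{prop:smallestSDGfromLP}, and minimality by extracting a coherent set of desirable gambles $\desir$ representing the competitor, invoking the minimality clause $\E(\lp)\subseteq\desir$ of Proposition~\ref{prop:smallestSDGfromLP}, and using the monotonicity of $\desir\mapsto\lp_\desir$ (which the paper leaves implicit and you helpfully spell out). No gaps.
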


\begin{proof}{\bf of Proposition~\ref{prop:naturalextension}~}
Let $\lp$ be a coherent conditional lower prevision on $\C\subseteq\C(\states)$ and consider any $\C'\subseteq\C(\states)$ such that $\C\subseteq\C'$. Then as we know from Proposition~\ref{prop:smallestSDGfromLP}, $\natexLP$ is a coherent conditional lower on $\C(\states)$ that coincides with $\lp$ on $\C$. Since it follows trivially from Definition~\ref{def:cohlp} that restricting the domain of a coherent conditional lower prevision preserves its coherence, this implies that the restriction of $\natexLP$ to $\C'$ is a coherent conditional lower prevision on $\C'$ that coincides with $\lp$ on $\C$. It remains to show that it is dominated by any other coherent conditional lower prevision on $\C'$ that coincides with $\lp$ on $\C$.

So consider any coherent conditional lower prevision $\lp'$ on $\C'$ that coincides with $\lp$ on $\C$. Because of Definition~\ref{def:cohlp}, this implies that there is a coherent set of desirable gambles $\desir$ on $\states$ such that $\lp_\desir$ coincides with $\lp'$ on $\C'$. Since this clearly implies that $\lp_\desir$ coincides with $\lp$ on $\C$, it now follows from Proposition~\ref{prop:smallestSDGfromLP} that $\E(\lp)\subseteq\desir$, which implies that $\natexLP=\lp_{\E(\lp)}\leq\lp_\desir$. Hence, since $\lp_\desir$ coincides with $\lp'$ on $\C'$, we find that $\natexLP$ is dominated by $\lp'$ on $\C'$, as desired.
\end{proof}
\vspace{-6pt}

\begin{proof}{\bf of Proposition~\ref{prop:naturalextension:full}~}
Immediate consequence of Proposition~\ref{prop:naturalextension}.
\end{proof}
\vspace{-16pt}

\begin{proposition}\label{prop:propertiesofLP}
Let $\lp$ be a coherent conditional lower prevision on $\C\subseteq\C(\states)$. Then for any two gambles $f,g\in\gambles$, any two events \mbox{$A,B\in\nonemptypower$}, any real number $\lambda\in\reals$ and any sequence of gambles $\{f_n\}_{n\in\nats}\subseteq\gambles$, whenever the involved lower and upper previsions are well-defined, we have that 
\vspace{5pt}

\begin{enumerate}[label=\emph{LP\arabic*:},ref=LP\arabic*]
\item
$\lp(f\vert B)\geq\inf_{x\in B}f(x)$\label{def:lowerprev:bounded}\hfill\emph{[boundedness]}
\item
$\lp(\lambda f\vert B)=\lambda\lp(f\vert B)$ if $\lambda\geq0$\label{def:lowerprev:homo}\hfill\emph{[non-negative homogeneity]}
\item
$\lp(f+g\vert B)\geq \lp(f\vert B)+\lp(g\vert B)$\label{def:lowerprev:superadditive}\hfill\emph{[superadditivity]}
\item
$\lp(\ind{B}[f-\lp(f\vert A\cap B)]\vert\, A)=0$ if $A\cap B\neq\emptyset$\label{def:lowerprev:GBR}\hfill\emph{[generalised Bayes rule]}
\item
$\lim_{n\to\infty}\lp(f_n\vert B)=\lp(f\vert B)$
 if\/ $\lim_{n\to\infty}\sup\abs{f-f_n}=0$
\label{def:lowerprev:uniformcontinuity}\hfill\emph{[uniform continuity]}
\item
$\lp(f+\lambda\vert B)=\lp(f\vert B)+\lambda$\label{def:lowerprev:constantadditivity}\hfill\emph{[constant additivity]}
\item
$\lp(f\vert B)\leq-\lp(-f\vert B)=\overline{P}(f\vert B)$\label{def:lowerprev:lowerbelowupper}
\end{enumerate}
\vspace{2pt}
\end{proposition}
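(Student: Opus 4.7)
My plan is to leverage Definition~\ref{def:cohlp} to fix a coherent set of desirable gambles $\desir$ on $\states$ with $\lp=\lp_\desir$ on $\C$, so that every property reduces, via Equation~\eqref{eq:LPfromD}, to a manipulation of gambles of the form $[f-\mu]\ind{B}$ under the axioms~\ref{def:SDG:partialgain}--\ref{def:SDG:partialloss}. I would order the proofs as \ref{def:lowerprev:bounded}, \ref{def:lowerprev:homo}, \ref{def:lowerprev:constantadditivity}, \ref{def:lowerprev:superadditive}, \ref{def:lowerprev:uniformcontinuity}, \ref{def:lowerprev:lowerbelowupper}, \ref{def:lowerprev:GBR}, so that later items may invoke earlier ones.

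Properties LP1, LP2, LP3, LP5, LP6 and LP7 are routine once the desirability characterisation is in place. LP1 follows because every $\mu<\inf_{x\in B}f(x)$ yields $[f-\mu]\ind{B}\in\gamblespos\subseteq\desir$ by~\ref{def:SDG:partialgain} and the assumption $B\neq\emptyset$. The $\lambda>0$ case of LP2 and LP6 result from the changes of variable $\mu\leftarrow\lambda\mu$ and $\mu\leftarrow\mu-\lambda$ inside the supremum defining $\lp_\desir$, combined with~\ref{def:SDG:homo}; the case $\lambda=0$ of LP2 reduces to $\lp(0\vert B)=0$, itself immediate from~\ref{def:SDG:partialgain} (lower bound) and~\ref{def:SDG:partialloss} (upper bound). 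LP3 follows from~\ref{def:SDG:convex} applied to witnesses $[f-\mu]\ind{B},[g-\nu]\ind{B}\in\desir$. LP7 then combines LP3 for $f$ and $-f$ with $\lp(0\vert B)=0$. LP5 is obtained by applying LP3 and LP1 to the decomposition $f_n=f+(f_n-f)$, which sandwiches $\lp(f_n\vert B)$ between $\lp(f\vert B)\pm\sup\abs{f-f_n}$.

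The main obstacle is LP4, the generalised Bayes rule. Writing $\alpha\coloneqq\lp(f\vert A\cap B)$, my strategy rests on two preparatory observations. The first is the strengthening that $[f-\mu]\ind{A\cap B}\in\desir$ for \emph{every} $\mu<\alpha$, not merely for some $\mu$ arbitrarily close to $\alpha$; I would establish this by picking a witness $\mu^*\in(\mu,\alpha]$ with $[f-\mu^*]\ind{A\cap B}\in\desir$ and absorbing the positive gamble $(\mu^*-\mu)\ind{A\cap B}\in\gamblespos$ via~\ref{def:SDG:convex}. The second is the pointwise identity
\begin{equation*}
[f-(\alpha+\nu)]\ind{A\cap B}=(\ind{B}[f-\alpha]-\nu)\ind{A}+\nu\ind{A\setminus B},
\end{equation*}
which bridges conditioning on $A$ and conditioning on $A\cap B$.

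With these in hand, for $\nu>0$ the assumption $(\ind{B}[f-\alpha]-\nu)\ind{A}\in\desir$ together with $\nu\ind{A\setminus B}\in\gamblespos\subseteq\desir$ (via~\ref{def:SDG:partialgain}) would force $[f-(\alpha+\nu)]\ind{A\cap B}\in\desir$ by~\ref{def:SDG:convex}, contradicting $\alpha+\nu>\alpha=\lp(f\vert A\cap B)$; the degenerate subcase $A\subseteq B$ collapses the identity (the $\ind{A\setminus B}$-term vanishes) and yields the same contradiction directly. This gives $\lp(\ind{B}[f-\alpha]\vert A)\leq 0$. For $\nu<0$, setting $\epsilon\coloneqq-\nu>0$, the identity rewrites $(\ind{B}[f-\alpha]-\nu)\ind{A}$ as $[f-(\alpha-\epsilon)]\ind{A\cap B}+\epsilon\ind{A\setminus B}$, where the first summand lies in $\desir$ by the strengthened statement above (since $\alpha-\epsilon<\alpha$) and the second lies in $\gamblespos$ whenever $A\setminus B\neq\emptyset$ (and is $0$ otherwise); hence~\ref{def:SDG:convex} together with~\ref{def:SDG:partialgain} places the whole in $\desir$, so $\lp(\ind{B}[f-\alpha]\vert A)\geq 0$. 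Combining the two bounds yields LP4.
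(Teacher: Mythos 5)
Your proposal is correct, but it takes a genuinely different route from the paper. The paper does not prove these properties from first principles: it passes to the natural extension $\natexLP$, invokes Proposition~\ref{prop:equivalentToPelessoniAndVicig} to identify it with a Williams-coherent lower prevision in the sense of Pelessoni and Vicig, and then cites their Theorem~2 (equivalently Williams' axioms (A1*)--(A4*)) for \ref{def:lowerprev:bounded}--\ref{def:lowerprev:GBR}, and Walley's results on unconditional coherent lower previsions for \ref{def:lowerprev:uniformcontinuity}--\ref{def:lowerprev:lowerbelowupper}. You instead derive everything directly from the desirability axioms \ref{def:SDG:partialgain}--\ref{def:SDG:partialloss} via Equation~\eqref{eq:LPfromD}, which is self-contained and arguably more in the spirit of the paper's own stated preference for Definition~\ref{def:cohlp} over Equation~\eqref{eq:prop:equivalentToPelessoniAndVicig}. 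Your treatment of \ref{def:lowerprev:GBR} is the only nontrivial part, and it holds up: the pointwise identity $[f-(\alpha+\nu)]\ind{A\cap B}=(\ind{B}[f-\alpha]-\nu)\ind{A}+\nu\ind{A\setminus B}$ is correct, the strengthened claim that $[f-\mu]\ind{A\cap B}\in\desir$ for \emph{every} $\mu<\alpha$ is the same absorption trick the paper uses in the proof of Proposition~\ref{prop:smallestSDGfromLP}, and both directions of the sandwich (including the degenerate case $A\subseteq B$) go through. The one bookkeeping point worth making explicit is that the statement is asserted for $\lp$ on its own domain ``whenever well-defined'', so you should note that $\lp_\desir$ is defined on all of $\C(\states)$ and coincides with $\lp$ on $\C$, which is exactly the role $\natexLP$ plays in the paper's version. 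What the paper's approach buys is brevity and an explicit link to the established literature; what yours buys is a proof that does not depend on external references and that exhibits the desirability mechanism behind each axiom.
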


\begin{proof}{\bf of Proposition~\ref{prop:propertiesofLP}~}
Let $\lp$ be a coherent conditional lower prevision on $\C\subseteq\C(\states)$ and let $\natexLP$ be its natural extension to $\C(\states)$. We will prove that $\natexLP$ satisfies~\ref{def:lowerprev:bounded}--\ref{def:lowerprev:lowerbelowupper}. Since we know from Proposition~\ref{prop:naturalextension} that $\natexLP$ coincides with $\lp$ on $\C$, this then implies that $\lp$ satisfies~\ref{def:lowerprev:bounded}--\ref{def:lowerprev:lowerbelowupper} on its domain---that is, whenever the expressions are well-defined.

Since we know from Proposition~\ref{prop:naturalextension} that $\natexLP$ is coherent, it follows from Proposition~\ref{prop:equivalentToPelessoniAndVicig} that $\natexLP$ is real-valued and satisfies Equation~\eqref{eq:prop:equivalentToPelessoniAndVicig}, which means that it satisfies the notion of Williams coherence that is considered in~\cite{Pelessoni:2009co} and~\cite{Williams:2007eu}. It therefore follows from~\cite[Theorem~2]{Pelessoni:2009co} or~\cite[(A1*)--(A4*)]{Williams:2007eu} that $\natexLP$ satisfies~\ref{def:lowerprev:bounded}--\ref{def:lowerprev:GBR}. Consider now any $B\in\nonemptypower$. Since the operator $\natexLP(\cdot\vert B)\colon\gambles\to\reals$ satisfies~\ref{def:lowerprev:bounded}--\ref{def:lowerprev:superadditive}, it is a coherent lower prevision in the sense of Walley. Therefore, it follows from~\cite[Section~2.6.1]{Walley:1991vk} that $\natexLP(\cdot\vert B)$ satisfies~\ref{def:lowerprev:uniformcontinuity}--\ref{def:lowerprev:lowerbelowupper}. Since this is true for every $B\in\nonemptypower$, it follows that $\natexLP$ satisfies~\ref{def:lowerprev:uniformcontinuity}--\ref{def:lowerprev:lowerbelowupper} as well.
\end{proof}
\vspace{-16pt}



\begin{proposition}\label{prop:cohlpifffouraxioms}
Consider a set of events $\B\subseteq\nonemptypower$ that is closed under finite unions and let $\mathcal{F}\subseteq\gambles$  be a linear space of gambles such that $\ind{B}f\in\mathcal{F}$ and $\ind{B}\in\mathcal{F}$ for every $f\in\mathcal{F}$ and $B\in\B$. Now let $\C\coloneqq\{(f,B)\colon f\in\mathcal{F},B\in\B\}$. 
Then a conditional lower prevision $\lp$ on $\C$ is coherent if and only if it is real-valued and satisfies~\emph{\ref{def:lowerprev:bounded}--\ref{def:lowerprev:GBR}}. 
\end{proposition}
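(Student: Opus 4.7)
My plan is to reduce both directions to the Williams-coherence characterisation of Proposition~\ref{prop:equivalentToPelessoniAndVicig}, which says that $\lp$ is coherent iff it is real-valued and satisfies the supremum inequality~\eqref{eq:prop:equivalentToPelessoniAndVicig}. The ``only if'' direction is then immediate: if $\lp$ is coherent, real-valuedness is part of Proposition~\ref{prop:equivalentToPelessoniAndVicig}, and \ref{def:lowerprev:bounded}--\ref{def:lowerprev:GBR} follow from Proposition~\ref{prop:propertiesofLP}, applied at each $(f,B)\in\C$ using the assumed structure of $\C$ to keep the relevant expressions in the domain. So the real content is the converse: assuming real-valuedness and \ref{def:lowerprev:bounded}--\ref{def:lowerprev:GBR}, I must derive~\eqref{eq:prop:equivalentToPelessoniAndVicig}.

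For the converse, fix $n\in\natswith$, reals $\lambda_0,\dots,\lambda_n\in\reals_{\geq0}$ and pairs $(f_0,B_0),\dots,(f_n,B_n)\in\C$, and let $B\coloneqq\bigcup_{i=0}^nB_i$; closure of $\B$ under finite unions gives $B\in\B$. Define
\[
A\coloneqq\sum_{i=1}^n\lambda_i\ind{B_i}[f_i-\lp(f_i\vert B_i)], \quad C\coloneqq\lambda_0\ind{B_0}[f_0-\lp(f_0\vert B_0)], \quad h\coloneqq A-C.
\]
Because $\mathcal{F}$ is a linear space containing each $\ind{B_i}$ and each $\ind{B_i}f_i$, the gambles $A$, $C$, $h$ and $-h$ all lie in $\mathcal{F}$, so each of $(A,B)$, $(C,B)$, $(h,B)$ and $(-h,B)$ belongs to $\C$, and the corresponding lower previsions are real. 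The goal is then to show that $\sup_{x\in B}h(x)\geq0$.

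For every $i\in\{0,\dots,n\}$, $B_i\subseteq B$ implies $B\cap B_i=B_i\neq\emptyset$, so \ref{def:lowerprev:GBR} yields $\lp(\ind{B_i}[f_i-\lp(f_i\vert B_i)]\vert B)=0$, and then \ref{def:lowerprev:homo} gives $\lp(\lambda_i\ind{B_i}[f_i-\lp(f_i\vert B_i)]\vert B)=0$. Iterating \ref{def:lowerprev:superadditive} over the sum defining $A$ therefore gives $\lp(A\vert B)\geq 0$, and the same reasoning yields $\lp(C\vert B)=0$. The key manipulation is to write $C=A+(-h)$, so that \ref{def:lowerprev:superadditive} gives $0=\lp(C\vert B)\geq\lp(A\vert B)+\lp(-h\vert B)\geq\lp(-h\vert B)$, whence $\lp(-h\vert B)\leq 0$. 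Finally, applying \ref{def:lowerprev:bounded} to $-h$ gives $\lp(-h\vert B)\geq-\sup_{x\in B}h(x)$, so $\sup_{x\in B}h(x)\geq -\lp(-h\vert B)\geq 0$, which is exactly~\eqref{eq:prop:equivalentToPelessoniAndVicig}. The main obstacle is not depth but bookkeeping: at each step one must verify that the gamble being conditioned on stays inside $\mathcal{F}$ and that $B$ stays inside $\B$, which is precisely what the structural hypotheses on $\mathcal{F}$ and $\B$ are designed to guarantee.
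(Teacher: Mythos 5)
Your proposal is correct and follows essentially the same route as the paper's proof: both directions reduce to Proposition~\ref{prop:equivalentToPelessoniAndVicig}, with the converse obtained by noting $B\in\B$, using \ref{def:lowerprev:homo} and \ref{def:lowerprev:GBR} to make each term $\lambda_i\ind{B_i}[f_i-\lp(f_i\vert B_i)]$ have conditional lower prevision zero given $B$, applying \ref{def:lowerprev:superadditive} to bound $\lp(-h\vert B)\leq0$, and finishing with \ref{def:lowerprev:bounded}. The only difference is cosmetic: you pass through the intermediate bound $\lp(A\vert B)\geq0$ and the identity $C=A+(-h)$, whereas the paper applies superadditivity once directly to the decomposition $C=(C-A)+A$; the content is identical.
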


\begin{proof}{\bf of Proposition~\ref{prop:cohlpifffouraxioms}~}
If $\lp$ is coherent, we know from Proposition~\ref{prop:equivalentToPelessoniAndVicig} that $\lp$ is real-valued and from Proposition~\ref{prop:propertiesofLP} that it satisfies~\ref{def:lowerprev:bounded}--\ref{def:lowerprev:GBR}. So assume that $\lp$ is real-valued and satisfies~\ref{def:lowerprev:bounded}--\ref{def:lowerprev:GBR}. We need to prove that $\lp$ is coherent.

Because of Proposition~\ref{prop:equivalentToPelessoniAndVicig}, it suffices to show for all $n\in\natswith$ and all choices of $\lambda_0,\dots,\lambda_n\in\reals_{\geq0}$ and $(f_0,B_0),\dots,(f_n,B_n)\in\mathcal{C}$ that
\begin{equation*}
\sup_{x\in B}
\Big(\,
\sum_{i=1}^n
\lambda_i\ind{B_i}(x)
[f_i(x)-\lp(f_i\vert B_i)]
-\lambda_0\ind{B_0}(x)
[f_0(x)-\lp(f_0\vert B_0)]
\Big)
\geq0,
\vspace{6pt}
\end{equation*}
with $B\coloneqq\cup_{i=0}^nB_i$. So let us consider any $n\in\natswith$, $\lambda_0,\dots,\lambda_n\in\reals_{\geq0}$ and $(f_0,B_0),\dots,(f_n,B_n)\in\mathcal{C}$ and let $B\coloneqq\cup_{i=0}^nB_i$. Since $B$ is a finite union of events in $\B$ and because $\B$ is closed under finite unions, we know that $B\in\B$.
 Therefore, and because $\mathcal{F}$ is a linear space such that $\ind{B}f\in\mathcal{F}$ and $\ind{B}\in\mathcal{F}$ for all $f\in\mathcal{F}$ and $B\in\B$, it now follows from~\eqref{def:lowerprev:superadditive} that
\vspace{3pt}
\begin{align*}
&\lp(\lambda_0\ind{B_0}[f_0-\lp(f_0\vert B_0)]\vert B)\\
&\geq
\lp\Big(\lambda_0\ind{B_0}[f_0-\lp(f_0\vert B_0)]-\sum_{i=1}^n\lambda_i\ind{B_i}[f_i-\lp(f_i\vert B_i)]\Big\vert B\Big)
+
\sum_{i=1}^n\lp\big(\lambda_i\ind{B_i}[f_i-\lp(f_i\vert B_i)]\big\vert B\big).
\end{align*}
Hence, since we know from~\ref{def:lowerprev:homo} and~\ref{def:lowerprev:GBR}---and our assumptions on $\mathcal{F}$ and $\B$---that
\begin{equation*}
\lp\big(\lambda_i\ind{B_i}[f_i-\lp(f_i\vert B_i)]\big\vert B\big)
=\lambda_i\lp\big(\ind{B_i}[f_i-\lp(f_i\vert B_i)]\big\vert B\big)=0
~~\text{for all $i\in\{0,\dots,n\}$,}
\end{equation*}
it follows from~\ref{def:lowerprev:bounded} that
\begin{align*}
0&\geq\lp\Big(\lambda_0\ind{B_0}[f_0-\lp(f_0\vert B_0)]-\sum_{i=1}^n\lambda_i\ind{B_i}[f_i-\lp(f_i\vert B_i)]\Big\vert B\Big)\\
&\geq\inf_{x\in B}\Big(\lambda_0\ind{B_0}(x)[f_0(x)-\lp(f_0\vert B_0)]-\sum_{i=1}^n\lambda_i\ind{B_i}(x)[f_i(x)-\lp(f_i\vert B_i)]\Big)\\
&=
-\sup_{x\in B}\Big(\sum_{i=1}^n\lambda_i\ind{B_i}(x)[f_i(x)-\lp(f_i\vert B_i)]-\lambda_0\ind{B_0}(x)[f_0(x)-\lp(f_0\vert B_0)]\Big),
\end{align*}
as desired.
\end{proof}
\vspace{-16pt}

\begin{corollary}\label{corol:cohlpifffouraxioms}
A conditional lower prevision $\lp$ on $\C(\states)$ is coherent if and only if it is real-valued and satisfies~\emph{\ref{def:lowerprev:bounded}--\ref{def:lowerprev:GBR}}. 
\end{corollary}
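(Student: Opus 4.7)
The plan is to derive this corollary directly from Proposition~\ref{prop:cohlpifffouraxioms} by verifying that the pair $(\mathcal{F},\B)$ with $\mathcal{F}\coloneqq\gambles$ and $\B\coloneqq\nonemptypower$ satisfies the structural hypotheses of that proposition, and that the resulting domain $\{(f,B)\colon f\in\mathcal{F}, B\in\B\}$ is exactly $\C(\states)$.

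First I would check the closure condition on $\B$: the union of any two non-empty subsets of $\states$ is again a non-empty subset of $\states$, so $\nonemptypower$ is closed under finite unions. Next I would check the conditions on $\mathcal{F}$: $\gambles$ is obviously a linear space of gambles, and for every $f\in\gambles$ and $B\in\nonemptypower$, both $\ind{B}f$ and $\ind{B}$ are bounded real-valued functions on $\states$ and hence lie in $\gambles=\mathcal{F}$. Finally I would note that by the very definition of $\C(\states)\coloneqq\gambles\times\nonemptypower$ given in Section~\ref{sec:modellinguncertainty}, we have
\begin{equation*}
\C(\states)=\{(f,B)\colon f\in\mathcal{F},B\in\B\}.
\end{equation*}

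With these facts in hand, the corollary follows immediately by applying Proposition~\ref{prop:cohlpifffouraxioms} to this choice of $\mathcal{F}$ and $\B$. There is no real obstacle here; the content is entirely in Proposition~\ref{prop:cohlpifffouraxioms}, and the corollary is just the observation that the maximal domain $\C(\states)$ is a particular instance of the structured domains considered there.
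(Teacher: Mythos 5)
Your proposal is correct and matches the paper's proof, which simply states that the corollary is an immediate consequence of Proposition~\ref{prop:cohlpifffouraxioms}; you have merely made explicit the routine verification that $\mathcal{F}=\gambles$ and $\B=\nonemptypower$ satisfy its hypotheses and that the resulting domain is $\C(\states)$.
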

\begin{proof}{\bf of Corollary~\ref{corol:cohlpifffouraxioms}~}
Immediate consequence of Proposition~\ref{prop:cohlpifffouraxioms}.
\end{proof}
\vspace{-16pt}



\begin{definition}[Conditional prevision]\label{def:prev}
A conditional prevision $\pr$ on $\C\subseteq\C(\states)$ is a conditional lower prevision on $\C$ that is self-conjugate, in the sense that 
\begin{equation}
(-f,B)\in\C
~~\text{and}~~
\pr(f\vert B)=-\pr(-f\vert B)
~~\text{for all $(f,B)\in\C$.}
\label{eq:self-conjugate}
\vspace{3pt}
\end{equation}
\end{definition}

\begin{definition}[Conditional linear prevision]\label{def:linearprev}
A conditional linear prevision $\pr$ on $\C\subseteq\C(\states)$ is a coherent conditional prevision on $\C$.
\end{definition}

\begin{proposition}\label{prop:propertiesofP}
Let $\pr$ be a conditional linear prevision on $\C\subseteq\C(\states)$. Then for any two gambles $f,g\in\gambles$, any two events \mbox{$A,B\in\nonemptypower$}, any real number $\lambda\in\reals$ and any sequence of gambles $\{f_n\}_{n\in\nats}\subseteq\gambles$, whenever the involved previsions are well-defined, we have that 
\vspace{5pt}

\begin{enumerate}[label=\emph{P\arabic*:},ref=P\arabic*]
\item
$\pr(f\vert B)\geq\inf_{x\in B}f(x)$\label{def:prev:bounded}\hfill\emph{[boundedness]}
\item
$\pr(\lambda f\vert B)=\lambda\pr(f\vert B)$\label{def:prev:homo}\hfill\emph{[homogeneity]}
\item
$\pr(f+g\vert B)=\pr(f\vert B)+\pr(g\vert B)$\label{def:prev:additive}\hfill\emph{[additivity]}
\item
$\pr(\ind{B}f\vert A)=\pr(f\vert A\cap B)\pr(B\vert A)$ if $A\cap B\neq\emptyset$\label{def:prev:GBR}\hfill\emph{[Bayes rule]}
\item
$\lim_{n\to\infty}\pr(f_n\vert B)=\pr(f\vert B)$
 if\/ $\lim_{n\to\infty}\sup\abs{f-f_n}=0$
\label{def:prev:uniformcontinuity}\hfill\emph{[uniform continuity]}
\end{enumerate}
\end{proposition}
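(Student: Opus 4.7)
The plan is to derive each property from the corresponding property of coherent conditional lower previsions given in Proposition~\ref{prop:propertiesofLP}, exploiting the fact that the self-conjugacy condition in Equation~\eqref{eq:self-conjugate} upgrades the inequalities in \ref{def:lowerprev:homo}--\ref{def:lowerprev:superadditive} to equalities. Indeed, a conditional linear prevision $\pr$ is by Definition~\ref{def:linearprev} a coherent conditional lower prevision, so \ref{def:lowerprev:bounded}--\ref{def:lowerprev:lowerbelowupper} are at our disposal whenever the relevant expressions are well-defined, and self-conjugacy tells us that $\pr(f\vert B)=-\pr(-f\vert B)=\overline{P}(f\vert B)$, collapsing the gap in \ref{def:lowerprev:lowerbelowupper}.

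\textbf{P1 and P5.} Boundedness is immediate from \ref{def:lowerprev:bounded}, and uniform continuity is immediate from \ref{def:lowerprev:uniformcontinuity}, both applied to $\pr$ viewed as a coherent lower prevision.

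\textbf{P2 (homogeneity).} For $\lambda\geq0$ this is just \ref{def:lowerprev:homo}. For $\lambda<0$, write $\lambda=-\abs{\lambda}$ and use self-conjugacy together with \ref{def:lowerprev:homo}: $\pr(\lambda f\vert B)=-\pr(\abs{\lambda}f\vert B)=-\abs{\lambda}\pr(f\vert B)=\lambda\pr(f\vert B)$.

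\textbf{P3 (additivity).} Superadditivity \ref{def:lowerprev:superadditive} gives $\pr(f+g\vert B)\geq\pr(f\vert B)+\pr(g\vert B)$. The reverse inequality comes from applying superadditivity to $-f$ and $-g$ and then invoking self-conjugacy:
\begin{equation*}
\pr(f+g\vert B)=-\pr(-f-g\vert B)\leq-\pr(-f\vert B)-\pr(-g\vert B)=\pr(f\vert B)+\pr(g\vert B).
\end{equation*}

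\textbf{P4 (Bayes rule).} This is the only step that requires a small calculation. Applying the generalised Bayes rule \ref{def:lowerprev:GBR} to $\pr$, we have $\pr(\ind{B}[f-\pr(f\vert A\cap B)]\vert A)=0$. Now expand using additivity (P3) and homogeneity (P2), which we have just established for arbitrary real scalars, to write
\begin{equation*}
0=\pr(\ind{B}f\vert A)+\pr(-\pr(f\vert A\cap B)\ind{B}\vert A)=\pr(\ind{B}f\vert A)-\pr(f\vert A\cap B)\,\pr(\ind{B}\vert A),
\end{equation*}
where we used that $\pr(f\vert A\cap B)$ is a real constant. Identifying $\pr(B\vert A)$ with $\pr(\ind{B}\vert A)$, this yields P4.

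The only place that needs any attention is \textbf{P4}, because one must verify that the gambles $\ind{B}f$, $\ind{B}[f-\pr(f\vert A\cap B)]$, and $\ind{B}$ all lie in the domain on which the relevant expressions are well-defined; but this is exactly the standing proviso of the proposition. Otherwise the proof is mechanical once one observes that self-conjugacy is the single ingredient that promotes the lower-prevision inequalities to prevision equalities.
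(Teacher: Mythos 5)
Your proof is correct and follows essentially the same route as the paper's: both derive P1--P5 from the lower-prevision properties \ref{def:lowerprev:bounded}--\ref{def:lowerprev:uniformcontinuity} of Proposition~\ref{prop:propertiesofLP}, using self-conjugacy (Equation~\eqref{eq:self-conjugate}) to upgrade the inequalities in \ref{def:lowerprev:homo} and \ref{def:lowerprev:superadditive} to equalities, and obtaining Bayes rule by expanding the generalised Bayes rule \ref{def:lowerprev:GBR} via the just-established homogeneity and additivity. The only cosmetic difference is that the paper writes the P4 computation in the reverse direction, starting from $\pr(\ind{B}f\vert A)-\pr(f\vert A\cap B)\pr(B\vert A)$ and reducing it to $\pr(\ind{B}[f-\pr(f\vert A\cap B)]\vert A)=0$.
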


\begin{proof}{\bf of Proposition~\ref{prop:propertiesofP}~}
Because of definitions~\ref{def:prev} and~\ref{def:linearprev}, we know that $\pr$ is a coherent conditional lower prevision on $\C$ that satisfies Equation~\eqref{eq:self-conjugate}. Due to Proposition~\ref{prop:propertiesofLP}, this implies that $\pr$ satisfies~\ref{def:lowerprev:bounded}--\ref{def:lowerprev:uniformcontinuity}. \ref{def:prev:bounded} and~\ref{def:prev:uniformcontinuity} follow trivially from~\ref{def:lowerprev:bounded} and~\ref{def:lowerprev:uniformcontinuity}, respectively. \ref{def:prev:homo} holds because
\begin{equation*}
P(\lambda f\vert B)
=
\begin{cases}
\lambda P(f\vert B)&\text{~if $\lambda\geq0$}\\
-\lambda P(-f\vert B)&\text{~if $\lambda\leq0$}
\end{cases}
~~=\lambda P(f\vert B)
\end{equation*}
where the first equality follows from~\ref{def:lowerprev:homo} and the second one follows from Equation~\eqref{eq:self-conjugate}.
\ref{def:prev:additive} holds because
\begin{equation*}
P(f\vert B)+P(g\vert B)\leq P(f+g\vert B)=-P(-f-g\vert B)\leq-P(-f\vert B)-P(-g\vert B)=P(f\vert B)+P(g\vert B),
\end{equation*}
where the inequalities follow from~\ref{def:lowerprev:superadditive} and the equalities follow from Equation~\eqref{eq:self-conjugate}.
Finally, \ref{def:prev:GBR} holds because
\begin{equation*}
\pr(\ind{B}f\vert A)-\pr(f\vert A\cap B)\pr(B\vert A)
=\pr(\ind{B}f\vert A)-\pr(f\vert A\cap B)\pr(\ind{B}\vert A)
=\pr(\ind{B}[f-\pr(f\vert A\cap B)]\vert A)=0
\end{equation*}
where second equality follows from~\ref{def:prev:homo} and~\ref{def:prev:additive} and the third equality follows from~\ref{def:lowerprev:GBR}.
\end{proof}
\vspace{-16pt}

\begin{proposition}\label{prop:linearPifffouraxioms}
Consider a set of events $\B\subseteq\nonemptypower$ that is closed under finite unions and let $\mathcal{F}\subseteq\gambles$  be a linear space of gambles such that $\ind{B}f\in\mathcal{F}$ and $\ind{B}\in\mathcal{F}$ for every $f\in\mathcal{F}$ and $B\in\B$. Now let $\C\coloneqq\{(f,B)\colon f\in\mathcal{F},B\in\B\}$. 
Then a conditional prevision $\pr$ on $\C$ is a conditional linear prevision on $\C$ if and only if it is real-valued and satisfies~\emph{\ref{def:prev:bounded}--\ref{def:prev:GBR}}. 
\end{proposition}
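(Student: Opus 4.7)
The plan is to mirror the structure of Proposition~\ref{prop:cohlpifffouraxioms}, using it as a stepping stone. The forward direction is quick: if $P$ is a conditional linear prevision, then by Definition~\ref{def:linearprev} it is a coherent conditional prevision, hence in particular a coherent conditional lower prevision, so Proposition~\ref{prop:equivalentToPelessoniAndVicig} gives real-valuedness, and Proposition~\ref{prop:propertiesofP} gives axioms~\ref{def:prev:bounded}--\ref{def:prev:GBR} (noting that the quantities appearing in these axioms are all well-defined on $\C$ thanks to our linear-space/closure-under-unions assumptions on $\mathcal{F}$ and $\B$).

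For the converse, assume $P$ is a conditional prevision on $\C$ that is real-valued and satisfies~\ref{def:prev:bounded}--\ref{def:prev:GBR}. I would first observe that Proposition~\ref{prop:cohlpifffouraxioms} is directly applicable to $\C$ under the present assumptions. So it suffices to derive the lower-prevision axioms \ref{def:lowerprev:bounded}--\ref{def:lowerprev:GBR} from \ref{def:prev:bounded}--\ref{def:prev:GBR}. Boundedness \ref{def:lowerprev:bounded} is literally \ref{def:prev:bounded}. Non-negative homogeneity \ref{def:lowerprev:homo} is an immediate special case of homogeneity \ref{def:prev:homo}. Superadditivity \ref{def:lowerprev:superadditive} follows because additivity \ref{def:prev:additive} gives equality.

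The one step that requires a brief manipulation is the generalised Bayes rule \ref{def:lowerprev:GBR}. Given $A\cap B\ne\emptyset$ with $A,B\in\B$ and $f\in\mathcal{F}$, I would write, using the linear-space structure of $\mathcal{F}$ to make sure every term lives in $\C$,
\begin{equation*}
P(\ind{B}[f-P(f\vert A\cap B)]\,\vert\, A)
=P(\ind{B}f\,\vert\, A)-P(f\vert A\cap B)\,P(\ind{B}\vert A),
\end{equation*}
by additivity~\ref{def:prev:additive} and homogeneity~\ref{def:prev:homo}, and then apply the Bayes-rule axiom \ref{def:prev:GBR} (identifying $P(B\vert A)$ with $P(\ind{B}\vert A)$) to rewrite $P(\ind{B}f\vert A)=P(f\vert A\cap B)P(\ind{B}\vert A)$, so that the right-hand side collapses to $0$. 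With all four axioms \ref{def:lowerprev:bounded}--\ref{def:lowerprev:GBR} in hand, Proposition~\ref{prop:cohlpifffouraxioms} yields that $P$ is coherent as a conditional lower prevision. Since $P$ is by hypothesis a conditional prevision (i.e.\ self-conjugate), Definition~\ref{def:linearprev} gives that $P$ is a conditional linear prevision.

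I do not expect any serious obstacle here; the only matter that requires a moment of care is checking that in the chain of equalities for \ref{def:lowerprev:GBR}, every gamble to which $P(\cdot\vert A)$ is applied actually lies in $\mathcal{F}$. This is guaranteed by the twin closure properties we assumed: $\ind{B}f\in\mathcal{F}$ and $\ind{B}\in\mathcal{F}$ together with $\mathcal{F}$ being a linear space ensure that $\ind{B}f$, $\ind{B}$, and $\ind{B}[f-P(f\vert A\cap B)]$ are all in $\mathcal{F}$, and $A\in\B$ makes each pair admissible for $P$.
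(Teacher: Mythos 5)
Your proposal is correct and follows essentially the same route as the paper: both directions are handled by reducing to Proposition~\ref{prop:cohlpifffouraxioms}, with the converse obtained by deriving \ref{def:lowerprev:bounded}--\ref{def:lowerprev:GBR} from \ref{def:prev:bounded}--\ref{def:prev:GBR}, the only nontrivial step being exactly the same generalised-Bayes-rule manipulation via \ref{def:prev:homo}, \ref{def:prev:additive} and \ref{def:prev:GBR}. Your extra care about the domain closure conditions is consistent with (and slightly more explicit than) the paper's treatment.
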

\begin{proof}{\bf of Proposition~\ref{prop:linearPifffouraxioms}~}
If $\pr$ is a conditional linear prevision on $\C$, we know from Proposition~\ref{prop:cohlpifffouraxioms} that $\pr$ is real-valued and from Proposition~\ref{prop:propertiesofP} that it satisfies~\ref{def:prev:bounded}--\ref{def:prev:GBR}. So assume that $\pr$ is real-valued and satisfies~\ref{def:prev:bounded}--\ref{def:prev:GBR}. We need to prove that $\pr$ is a conditional linear prevision on $\C$.

Since $\pr$ satisfies~\ref{def:prev:bounded}--\ref{def:prev:additive}, it clearly satisfies~\ref{def:lowerprev:bounded}--\ref{def:lowerprev:superadditive} as well. $\pr$ also satisfies~\ref{def:lowerprev:GBR} because, for all $f\in\mathcal{F}$ and \mbox{$A,B\in\nonemptypower$} such that $A\in\B$ and $\emptyset\neq A\cap B\in\B$, it follows from~\ref{def:prev:homo}--\ref{def:prev:GBR} that
\begin{equation*}
\pr(\ind{B}[f-\pr(f\vert A\cap B)]\vert A)
=\pr(\ind{B}f\vert A)-\pr(f\vert A\cap B)\pr(\ind{B}\vert A)
=
\pr(\ind{B}f\vert A)-\pr(f\vert A\cap B)\pr(B\vert A)=0.
\end{equation*}
Since $\pr$ is real-valued and satisfies~\ref{def:lowerprev:bounded}--\ref{def:lowerprev:GBR}, and because we know from Definition~\ref{def:prev} that $\pr$ is a conditional lower prevision on $\C$, Proposition~\ref{prop:cohlpifffouraxioms} now implies that $\pr$ is coherent. Therefore, it follows from Definition~\ref{def:linearprev} that $\pr$ is a conditional linear prevision on $\C$.
\end{proof}
\vspace{-16pt}

\begin{corollary}\label{corol:fulllinearPifffouraxioms}
A conditional prevision $\pr$ on $\C(\states)$ is a conditional linear prevision on $\C(\states)$ if and only if it is real-valued and satisfies~\emph{\ref{def:prev:bounded}--\ref{def:prev:GBR}}. 
\end{corollary}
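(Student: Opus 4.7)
The plan is to derive Corollary~\ref{corol:fulllinearPifffouraxioms} as an immediate specialisation of Proposition~\ref{prop:linearPifffouraxioms}. Since the corollary concerns the maximal domain $\C(\states)=\gambles\times\nonemptypower$, the strategy is simply to instantiate Proposition~\ref{prop:linearPifffouraxioms} with $\mathcal{F}\coloneqq\gambles$ and $\B\coloneqq\nonemptypower$, and to check that these choices meet the structural hypotheses of that proposition.

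First I would verify that $\B=\nonemptypower$ is closed under finite unions, which is immediate since the union of non-empty subsets of $\states$ is non-empty. Next I would note that $\mathcal{F}=\gambles$, the set of bounded real-valued functions on $\states$, is trivially a linear space. Then, for any $f\in\gambles$ and $B\in\nonemptypower$, the product $\ind{B}f$ is bounded (by $\sup\abs{f}$) and hence belongs to $\gambles$, while $\ind{B}$ is bounded by $1$ and hence also belongs to $\gambles$. Consequently, the hypotheses of Proposition~\ref{prop:linearPifffouraxioms} are satisfied, and the associated domain $\{(f,B)\colon f\in\mathcal{F},B\in\B\}$ coincides with $\C(\states)$.

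With those checks in hand, Proposition~\ref{prop:linearPifffouraxioms} delivers the desired equivalence verbatim: a conditional prevision $\pr$ on $\C(\states)$ is a conditional linear prevision on $\C(\states)$ if and only if it is real-valued and satisfies \ref{def:prev:bounded}--\ref{def:prev:GBR}. There is no real obstacle here; the only thing to be careful about is not to confuse the role of $\ind{B}$ and $\ind{B}f$ in the structural hypotheses, and to recall that gambles are by definition bounded, so closure of $\gambles$ under multiplication by indicators is automatic rather than requiring any argument.
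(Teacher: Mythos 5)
Your proposal is correct and follows exactly the paper's route: the paper also obtains this corollary as an immediate consequence of Proposition~\ref{prop:linearPifffouraxioms} with $\mathcal{F}=\gambles$ and $\B=\nonemptypower$. Your explicit verification of the structural hypotheses is a harmless elaboration of what the paper leaves implicit.
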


\begin{proof}{\bf of Corollary~\ref{corol:fulllinearPifffouraxioms}~}
Immediate consequence of Proposition~\ref{prop:linearPifffouraxioms}.
\end{proof}
\vspace{-16pt}

\begin{lemma}\label{lemma:fulllinearlpifffouraxioms:withsup}
A conditional prevision on $\C(\states)$ is a conditional linear prevision on $\C(\states)$ if and only if it is real-valued and satisfies~\ref{def:prev:boundedbysup} and~\ref{def:prev:homo}--\ref{def:prev:GBR}, with
\begin{enumerate}[label=P\arabic*':,ref=P\arabic*']
\item
$\pr(f\vert B)\leq\sup_{x\in B}f(x)$ for all $(f,B)\in\C(\states)$.\label{def:prev:boundedbysup}
\end{enumerate}
\end{lemma}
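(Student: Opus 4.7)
The plan is to reduce the statement to Corollary~\ref{corol:fulllinearPifffouraxioms}, which already characterises conditional linear previsions on $\C(\states)$ via real-valuedness together with \ref{def:prev:bounded} and \ref{def:prev:homo}--\ref{def:prev:GBR}. The only novelty here is that \ref{def:prev:bounded} (lower bound by the infimum) has been replaced by \ref{def:prev:boundedbysup} (upper bound by the supremum). The key observation is that for any conditional prevision---one that is by Definition~\ref{def:prev} self-conjugate---these two bounds are equivalent, so the result is essentially a symmetry argument.

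For the ``only if'' direction, assume $\pr$ is a conditional linear prevision on $\C(\states)$. Proposition~\ref{prop:propertiesofP} immediately delivers real-valuedness together with \ref{def:prev:bounded} and \ref{def:prev:homo}--\ref{def:prev:GBR}, so it only remains to derive \ref{def:prev:boundedbysup}. For any $(f,B)\in\C(\states)$, applying \ref{def:prev:bounded} to $-f$ yields $\pr(-f\vert B)\geq\inf_{x\in B}(-f(x))=-\sup_{x\in B}f(x)$, and then self-conjugacy (Equation~\eqref{eq:self-conjugate}) gives $\pr(f\vert B)=-\pr(-f\vert B)\leq\sup_{x\in B}f(x)$, which is precisely \ref{def:prev:boundedbysup}.

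For the ``if'' direction, assume $\pr$ is a conditional prevision that is real-valued and satisfies \ref{def:prev:boundedbysup} and \ref{def:prev:homo}--\ref{def:prev:GBR}. By Corollary~\ref{corol:fulllinearPifffouraxioms}, it suffices to verify \ref{def:prev:bounded}. The argument is symmetric to the previous one: for any $(f,B)\in\C(\states)$, \ref{def:prev:boundedbysup} applied to $-f$ gives $\pr(-f\vert B)\leq\sup_{x\in B}(-f(x))=-\inf_{x\in B}f(x)$, and self-conjugacy then yields $\pr(f\vert B)=-\pr(-f\vert B)\geq\inf_{x\in B}f(x)$. There is no real obstacle in this proof; its entire content is the self-conjugacy duality between infima and suprema, which is built into the definition of a conditional prevision and which already underlies the $\overline{P}(f\vert B)=-\lp(-f\vert B)$ identity used throughout the paper.
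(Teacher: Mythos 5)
Your proposal is correct and follows exactly the same route as the paper's own proof: reduce to Corollary~\ref{corol:fulllinearPifffouraxioms} by observing that, for a self-conjugate $\pr$, conditions \ref{def:prev:bounded} and \ref{def:prev:boundedbysup} are equivalent. You merely spell out the two symmetric implications that the paper states in one line.
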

\begin{proof}{\bf of Lemma~\ref{lemma:fulllinearlpifffouraxioms:withsup}~}
Since $\pr$ is a conditional prevision and therefore satisfies Equation~\eqref{eq:self-conjugate}, we see that $\pr$ satisfies~\ref{def:prev:bounded} if and only if it satisfies~\ref{def:prev:boundedbysup}. Therefore, the result follows from Corollary~\ref{corol:fulllinearPifffouraxioms}.
\end{proof}
\vspace{-16pt}


\begin{proposition}\label{prop:lowerenvelope}
A conditional lower prevision $\lp$ on $\C\subseteq\C(\states)$ is coherent if and only if there is a non-empty set $\mathbb{P}$ of conditional linear previsions on $\C(\states)$ such that
\begin{equation}\label{eq:prop:lowerenvelope}
\lp(f\vert B)=\inf\{P(f\vert B)\colon P\in\mathbb{P}\}
~~\text{for all $(f,B)\in\C$.}
\end{equation}
The same is true if the infimum in this expression is replaced by a minimum.
\end{proposition}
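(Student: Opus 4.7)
The plan is to work through coherent sets of desirable gambles via Proposition~\ref{prop:smallestSDGfromLP}. The ``if'' direction will be a straightforward intersection argument; the ``only if'' direction will require a Zorn's lemma construction whose delicate step is showing that the resulting set is rich enough to yield a \emph{linear} prevision.

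For the ``if'' direction, given $\mathbb{P}$ with $\lp=\inf_{P}P$ on $\C$, I would set $\desir:=\bigcap_{P\in\mathbb{P}}\E(P)$. The four axioms~\ref{def:SDG:partialgain}--\ref{def:SDG:partialloss} transfer from each $\E(P)$ to the intersection (the positive axioms termwise, and~\ref{def:SDG:partialloss} because any $f\leq0$ is excluded from every $\E(P)$), so $\desir$ is coherent. Then $\lp_\desir=\lp$ on $\C$: the $\leq$ direction uses $\desir\subseteq\E(P)$ together with $\lp_{\E(P)}=P$ from Proposition~\ref{prop:smallestSDGfromLP}; the $\geq$ direction uses that for any $\mu<\inf_{P}P(f|B)$ the gamble $[f-\mu]\ind{B}$ lies in every $\E(P)$, with a small multiple of $\ind{B}\in\gamblespos$ bridging the strict/non-strict gap. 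Coherence of $\lp$ then follows from Definition~\ref{def:cohlp}.

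For the ``only if'' direction I would reduce to the following claim: for each $(f^*,B^*)\in\C$ there is a conditional linear prevision $P$ on $\C(\states)$ dominating $\lp$ on $\C$ with $P(f^*|B^*)=\lp(f^*|B^*)$. The collection of such $P$'s then serves as $\mathbb{P}$, and the attainment of $\lp(f^*|B^*)$ by $P_{(f^*,B^*)}$ directly yields the ``minimum'' form; the degenerate case $\C=\emptyset$ is handled by the same Zorn argument starting from $\gamblespos$ without any constraint, giving at least one linear prevision so that $\mathbb{P}$ is non-empty. To prove the claim, set $\desir_0:=\E(\lp)$ and $\mu^*:=\lp(f^*|B^*)$, and apply Zorn's lemma to the class $\Gamma$ of coherent sets of desirable gambles $\desir'\supseteq\desir_0$ satisfying $[f^*-\mu]\ind{B^*}\notin\desir'$ for all $\mu>\mu^*$. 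Non-emptiness ($\desir_0\in\Gamma$) and closure under chain unions are routine, yielding a maximal element $\desir^*\in\Gamma$.

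The main obstacle is to promote this constrained maximality to full maximality among all coherent sets of desirable gambles, that is, to show that for every $g\in\gambles\setminus\{0\}$ either $g\in\desir^*$ or $-g\in\desir^*$. Once this holds, a short application of~\ref{def:SDG:convex} and~\ref{def:SDG:partialloss} forces self-conjugacy of $P:=\lp_{\desir^*}$, so $P$ is a conditional linear prevision by Definitions~\ref{def:prev}--\ref{def:linearprev}, dominates $\lp$ on $\C$ because $\desir_0\subseteq\desir^*$, and satisfies $P(f^*|B^*)=\mu^*$ by construction. I would argue full maximality by contradiction: if $g,-g\notin\desir^*\cup\{0\}$, Lemma~\ref{lemma:extendD} makes $\E(\desir^*\cup\{g\})$ and $\E(\desir^*\cup\{-g\})$ coherent proper extensions, so by maximality of $\desir^*$ in $\Gamma$ each leaves $\Gamma$ and therefore contains some $[f^*-\mu_i]\ind{B^*}$ with $\mu_i>\mu^*$. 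Unwrapping $\posi$ from Equations~\eqref{eq:posi}--\eqref{eq:natextop} and ruling out the degenerate subcases by boundedness of $\lp$, this yields $[f^*-\mu_1]\ind{B^*}+\lambda_1g\in\desir^*$ and $[f^*-\mu_2]\ind{B^*}-\lambda_2g\in\desir^*$ for some $\lambda_1,\lambda_2>0$. The positive combination $\lambda_2(\cdots)+\lambda_1(\cdots)$ cancels the $g$-terms and gives $[f^*-\mu_3]\ind{B^*}\in\desir^*$ for $\mu_3$ a convex combination of $\mu_1,\mu_2$ (so $\mu_3>\mu^*$), contradicting $\desir^*\in\Gamma$ and completing the proof.
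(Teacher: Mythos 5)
Your proposal is correct, but it takes a genuinely different route from the paper's proof in both directions. For the ``if'' direction, the paper verifies the Pelessoni--Vicig condition of Proposition~\ref{prop:equivalentToPelessoniAndVicig} directly, via an $\epsilon$-argument that picks a $P\in\mathbb{P}$ nearly attaining the infimum at $(f_0,B_0)$; your intersection $\desir=\bigcap_{P\in\mathbb{P}}\E(P)$ instead produces a single coherent set of desirable gambles witnessing Definition~\ref{def:cohlp}, which is arguably more in the spirit of the paper's stated preference for desirability-based arguments (and the axioms~\ref{def:SDG:partialgain}--\ref{def:SDG:partialloss} do pass to intersections exactly as you say, with $\lp_{\E(P)}=P$ on $\C(\states)$ supplied by Proposition~\ref{prop:smallestSDGfromLP}). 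For the ``only if'' direction, the paper does not construct anything: it passes to the conjugate upper prevision of the natural extension, checks Williams' condition (A*), and cites the lower envelope theorem \cite[Theorem~2]{Williams:2007eu} as a black box (with Lemma~\ref{lemma:fulllinearlpifffouraxioms:withsup} used only to match Williams' notion of conditional prevision to the paper's). You instead reprove that envelope theorem from scratch: Zorn's lemma over coherent supersets of $\E(\lp)$ subject to the exclusion constraint at $(f^*,B^*)$, promotion of constrained maximality to full maximality via Lemma~\ref{lemma:extendD} and the cancellation of the $g$-terms (with the degenerate case $[f^*-\mu_3]\ind{B^*}=0$ correctly killed by boundedness), and the standard observation that fully maximal coherent sets of desirable gambles induce self-conjugate, hence linear, previsions. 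Your version is longer and makes the appeal to choice explicit, but it is self-contained, it handles the empty-domain case cleanly, and it delivers the attainment (``minimum'') form directly by construction, whereas the paper has to extract that from the end of Williams' proof rather than from his theorem statement.
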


\begin{proof}{\bf of Proposition~\ref{prop:lowerenvelope}~}
Let $\lp$ be a coherent conditional lower prevision on $\C\subseteq\C(\states)$.

We first prove the `only if' part of the statement. In order to do that, we let $\natexLP$ be the natural extension of $\lp$ to $\C(\states)$, and let $\natexUP$ be the conditional upper prevision that corresponds to $\natexUP$. We will prove that there is a non-empty set $\mathbb{P}$ of conditional linear previsions on $\C(\states)$ such that
\begin{equation}\label{eq:prop:lowerenvelope:1}
\natexLP(f\vert B)=\min\{P(f\vert B)\colon P\in\mathbb{P}\}
~~\text{for all $(f,B)\in\C(\states)$}.
\end{equation}
Since we know from Proposition~\ref{prop:naturalextension:full} that $\natexLP$ coincides with $\lp$ on $\C$, this then clearly implies the `only if' part of the statement.

Since we know from Proposition~\ref{prop:naturalextension:full} that $\natexLP$ is coherent, it follows from Proposition~\ref{prop:equivalentToPelessoniAndVicig} that $\natexLP$ is real-valued and satisfies Equation~\eqref{eq:prop:equivalentToPelessoniAndVicig}. Therefore, for all $n\in\natswith$ and all choices of $\lambda_0,\dots,\lambda_n\in\reals_{\geq0}$ and $(f_0,B_0),\dots,(f_n,B_n)\in\mathcal{C}(\states)$, if we let $B\coloneqq\cup_{i=0}^nB_i$, we find that
\vspace{-4pt}
\begin{multline*}
\sup_{x\in B}
\Big(
\sum_{i=1}^n
\lambda_i\ind{B_i}(x)
[(-f_i(x))-\natexLP(-f_i\vert B_i)]
-
\lambda_0\ind{B_0}(x)
[(-f_0(x))-\natexLP(-f_0\vert B_0)]
\Big)\\
=\sup_{x\in B}
\Big(
\lambda_0\ind{B_0}(x)
[f_0(x)-\natexUP(f_0\vert B_0)]
-\sum_{i=1}^n
\lambda_i\ind{B_i}(x)
[f_i(x)-\natexUP(f_i\vert B_i)]
\Big)
\geq0.
\end{multline*}
Since this means that $\natexUP$ satisfies condition (A*) in~\cite{Williams:2007eu}, it now follows from~\cite[Theorem~2, Definition~2 and Proposition~6]{Williams:2007eu} and Lemma~\ref{lemma:fulllinearlpifffouraxioms:withsup} that there is a non-empty set $\mathbb{P}$ of conditional linear previsions on $\C(\states)$ such that
\begin{equation}\label{eq:prop:lowerenvelope:2}
\natexUP(f\vert B)=\sup\{P(f\vert B)\colon P\in\mathbb{P}\}=\max\{P(f\vert B)\colon P\in\mathbb{P}\}
~~\text{for all $(f,B)\in\C(\states)$}.
\end{equation}
The first equality corresponds to~\cite[Theorem~2]{Williams:2007eu}; the second equality is not stated in~\cite[Theorem~2]{Williams:2007eu} itself, but follows from the end of its proof. \cite[Definition~2 and Proposition~6]{Williams:2007eu} and Lemma~\ref{lemma:fulllinearlpifffouraxioms:withsup} are needed solely for the purpose of establishing that what Williams calls a conditional prevision in~\cite[Theorem~2]{Williams:2007eu} is equivalent to what we here call a conditional linear prevision on $\C(\states)$. Equation~\eqref{eq:prop:lowerenvelope:1} now follows because, for all $(f,B)\in\C(\states)$, 
\begin{align*}
\natexLP(f\vert B)
=-\natexUP(-f\vert B)
&=-\max\{P(-f\vert B)\colon P\in\mathbb{P}\}\\
&=-\max\{-P(f\vert B)\colon P\in\mathbb{P}\}
=\min\{P(f\vert B)\colon P\in\mathbb{P}\},
\end{align*}
where the second equality follows from Equation~\eqref{eq:prop:lowerenvelope:2} and the third equality follows from Equation~\eqref{eq:self-conjugate}.

We end by proving the `if' part of the statement. So assume that there is some non-empty set $\mathbb{P}$ of conditional linear previsions on $\C(\states)$ that satisfies Equation~\eqref{eq:prop:lowerenvelope}. We will prove that $\lp$ is real-valued and that, for all $n\in\natswith$ and all choices of $\lambda_0,\dots,\lambda_n\in\reals_{\geq0}$ and $(f_0,B_0),\dots,(f_n,B_n)\in\mathcal{C}$,
\begin{equation}\label{eq:prop:lowerenvelope:3}
\sup_{x\in B}
\Big(\,
\sum_{i=1}^n
\lambda_i\ind{B_i}(x)
[f_i(x)-\lp(f_i\vert B_i)]
-\lambda_0\ind{B_0}(x)
[f_0(x)-\lp(f_0\vert B_0)]
\Big)
\geq0,
\vspace{6pt}
\end{equation}
with $B\coloneqq\cup_{i=0}^nB_i$. Proposition~\ref{prop:equivalentToPelessoniAndVicig} then implies that $\lp$ is coherent.

Let us first prove that $\lp$ is real-valued. Fix any $(f,B)\in\C$. For every $P\in\mathbb{P}$, it then follows from Proposition~\ref{prop:propertiesofP} and Lemma~\ref{lemma:fulllinearlpifffouraxioms:withsup} that $\inf_{x\in B}\leq P(f\vert B)\leq\sup_{x\in B}f(x)$. Hence, since $\mathbb{P}$ is non-empty, it follows from Equation~\eqref{eq:prop:lowerenvelope} that $\inf_{x\in B}\leq\lp(f\vert B)\leq\sup_{x\in B}f(x)$. Since $f$ is a gamble and therefore by definition bounded, this implies that $\lp(f\vert B)$ is real-valued. Since this is true for every $(f,B)\in\C$, it follows that $\lp$ is real-valued.

Finally, fix any $n\in\natswith$, any $\lambda_0,\dots,\lambda_n\in\reals_{\geq0}$ and $(f_0,B_0),\dots,(f_n,B_n)\in\mathcal{C}$, let $B\coloneqq\cup_{i=0}^nB_i$ and consider any $\epsilon>0$. It then follows from Equation~\eqref{eq:prop:lowerenvelope} that there is a conditional linear prevision $P\in\mathbb{P}$ on $\C(\states)$ such that $\lambda_0 P(f_0\vert B_0)\leq\lambda_0\lp(f_0\vert B_0)+\epsilon$. Furthermore, for any $i\in\{1,\dots,n\}$, Equation~\eqref{eq:prop:lowerenvelope} also implies that $P(f_i\vert B_i)\geq\lp(f_i\vert B_i)$. Hence, we find that
\begin{align*}
\sup_{x\in B}
\Big(\,
\sum_{i=1}^n
\lambda_i\ind{B_i}(x)
&[f_i(x)-\lp(f_i\vert B_i)]
-\lambda_0\ind{B_0}(x)
[f_0(x)-\lp(f_0\vert B_0)]
\Big)\\
&\geq\sup_{x\in B}
\Big(\,
\sum_{i=1}^n
\lambda_i\ind{B_i}(x)
[f_i(x)-\pr(f_i\vert B_i)]
-\lambda_0\ind{B_0}(x)
[f_0(x)-\pr(f_0\vert B_0)]
-\ind{B_0}\epsilon
\Big)\\
&\geq\sup_{x\in B}
\Big(\,
\sum_{i=1}^n
\lambda_i\ind{B_i}(x)
[f_i(x)-\pr(f_i\vert B_i)]
-\lambda_0\ind{B_0}(x)
[f_0(x)-\pr(f_0\vert B_0)]
\Big)-\epsilon\geq-\epsilon,
\end{align*}
where the last inequality follows from Proposition~\ref{prop:equivalentToPelessoniAndVicig} because we know from Definition~\ref{def:linearprev} that $P$ is coherent. Since $\epsilon>0$ is arbitrary, we obtain Equation~\eqref{eq:prop:lowerenvelope:3}, as desired.
\end{proof}
\vspace{-10pt}

\subsection{Proofs and Additional Material for Section~\ref{sec:indnatext}}
\vspace{5pt}

\subsubsection{The Sets of Desirable Gambles Part}
\vspace{5pt}

\begin{proposition}\label{prop:productcoherent:SDG}
$\desir_1\otimes\desir_2$ is a coherent set of desirable gambles on $\states_1\times\states_2$.
\end{proposition}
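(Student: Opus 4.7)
The plan is to invoke Lemma~\ref{lemma:coherenceiffLP4}: since $\desir_1\otimes\desir_2 = \E(\A_{1\to2}\cup\A_{2\to1})$ has the form $\E(\cdot)$, coherence reduces to checking the partial-loss axiom~\ref{def:SDG:partialloss}, namely that no gamble $f\leq 0$ belongs to $\desir_1\otimes\desir_2$. Suppose for contradiction that such an $f$ does. Using Equations~\eqref{eq:posi}--\eqref{eq:A21s} I would expand
\begin{equation*}
f = \sum_{i=1}^{n} \lambda_i f_2^i(X_2)\ind{B_1^i}(X_1) + \sum_{j=1}^{m} \mu_j f_1^j(X_1)\ind{B_2^j}(X_2) + q,
\end{equation*}
with $\lambda_i,\mu_j \in \realspos$, $f_2^i\in\desir_2$, $f_1^j\in\desir_1$, $B_1^i$ and $B_2^j$ non-empty, and $q\in\gamblesnonneg$. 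Because $q\geq 0$, the inequality $f\leq 0$ is only strengthened when $q$ is dropped, so I may assume $q=0$ from now on.

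The central idea is to integrate out one of the two variables using a linear prevision consistent with the corresponding marginal, thereby reducing to a coherence statement about the other marginal. By Proposition~\ref{prop:lowerenvelope} applied to $\lp_{\desir_2}$, there exists a linear prevision $P_2$ on $\gambleson{2}$ satisfying $P_2(h) \geq 0$ for every $h\in\desir_2$. For each $x_1\in\states_1$, applying $P_2$ to the slice $f(x_1,\cdot)\in\gambleson{2}$ and using linearity yields
\begin{equation*}
0 \geq P_2(f(x_1,\cdot)) = \sum_i \lambda_i P_2(f_2^i)\ind{B_1^i}(x_1) + \sum_j \mu_j f_1^j(x_1) P_2(\ind{B_2^j}),
\end{equation*}
the bound coming from $f\leq 0$ together with $P_2(g) \leq \sup g$. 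The first sum on the right is non-negative, so the second is pointwise non-positive in $x_1$. Whenever some coefficient $\mu_j P_2(\ind{B_2^j})$ is strictly positive, the second sum is a strictly positive combination of elements of $\desir_1$ and hence lies in $\desir_1$ by~\ref{def:SDG:homo} and~\ref{def:SDG:convex}; since it is $\leq 0$, this contradicts~\ref{def:SDG:partialloss} for $\desir_1$. The dual argument, applying a consistent $P_1$ on the slices $f(\cdot,x_2)$, handles the symmetric case.

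The main obstacle is the genuinely degenerate situation in which every consistent $P_1$ sends every $\ind{B_1^i}$ to $0$ and every consistent $P_2$ sends every $\ind{B_2^j}$ to $0$. Substituting this back into the displayed identity and evaluating at any $x_1 \in B_1^i$ forces $P_2(f_2^i)=0$ for every $i$ and every consistent $P_2$, and symmetrically $P_1(f_1^j)=0$. I would dispatch this residual case by a direct pointwise analysis: each $f_2^i\in\desir_2$ and each $f_1^j\in\desir_1$ must be strictly positive somewhere (else~\ref{def:SDG:partialloss} for the corresponding marginal would be violated). Partitioning $\states_1$ and $\states_2$ into the finitely many atoms determined by $\{B_1^i\}$ and $\{B_2^j\}$, and tracking on each atomic rectangle the pointwise inequality $f(x_1,x_2)\leq 0$ together with the positive-combination structure, an iterative choice of atoms where the suprema of the non-trivial partial sums $\sum_{i \in I_k}\lambda_i f_2^i \in \desir_2$ and $\sum_{j \in J_l}\mu_j f_1^j \in \desir_1$ are strictly positive must eventually cycle; at the cycle, one extracts some such partial sum that is pointwise $\leq 0$ on its entire space, contradicting~\ref{def:SDG:partialloss} for the corresponding marginal. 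The combinatorial bookkeeping at this final step is the technical heart of the proof.
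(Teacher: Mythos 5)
Your opening reduction is the same as the paper's (Lemma~\ref{lemma:coherenceiffLP4}, then refute \ref{def:SDG:partialloss} by expanding an alleged non-positive $f\in\E(\A_{1\to2}\cup\A_{2\to1})$), and your first main step---slicing $f(x_1,\cdot)$ and integrating out $X_2$ with a linear prevision $P_2$ dominating $\lp_{\desir_2}$ obtained from Proposition~\ref{prop:lowerenvelope}---is a legitimate and genuinely different device from anything in the paper's proof. It correctly disposes of the case where some consistent $P_2$ gives $P_2(\ind{B_2^j})>0$ (and symmetrically for $P_1$): the resulting combination $\sum_j\mu_jP_2(\ind{B_2^j})f_1^j$ is then a non-trivial positive combination of elements of $\desir_1$ that is pointwise non-positive, contradicting \ref{def:SDG:partialloss}.

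The problem is that the residual ``degenerate'' case---every consistent $P_1$ annihilates every $\ind{B_1^i}$ and every consistent $P_2$ annihilates every $\ind{B_2^j}$---is exactly where the real content of the proposition sits, and your treatment of it is not a proof. This case is not vacuous: it occurs whenever the conditioning events all have upper probability zero under the respective marginals (take, e.g., a precise continuous marginal and singleton events $B_2^j$; the gambles $f_2^i\in\desir_2$ supported there then satisfy $P_2(f_2^i)=0$ for every consistent $P_2$, as your own substitution shows). What remains is the pointwise inequality $\sum_i\lambda_i\ind{B_1^i}(x_1)f_2^i(x_2)+\sum_j\mu_j\ind{B_2^j}(x_2)f_1^j(x_1)\leq0$ with no probabilistic leverage left, which is essentially the full finite-space coherence problem for the independent product. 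The paper resolves it by passing to the atoms of the algebras generated by $\{B_1^i\}$ and $\{B_2^j\}$, replacing each $f_2^i$ and $f_1^j$ by its suprema over those atoms to obtain gambles $h_{2,i}$, $h_{1,j}$ on \emph{finite} spaces $\Y_1,\Y_2$ with $h\leq0$, and then running a dichotomy: if both $\E(\{h_{1,j}\})$ and $\E(\{h_{2,i}\})$ are coherent, Lemma~\ref{lemma:GertandQuique} supplies \emph{strictly} positive mass functions $p_1,p_2$ whose product makes $E_p(h)>0$, contradicting $h\leq0$; if one is incoherent, a non-positive positive combination $\sum_j\lambda_jh_{1,j}+\lambda g^*\leq0$ exists, and the supremum construction pulls this back to $\sum_j\lambda_jf_{1,j}\leq0$ in $\desir_1$, again contradicting \ref{def:SDG:partialloss}. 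Your envelope argument cannot reach this because it only ever produces coefficients $P_2(\ind{B_2^j})\geq0$, never the strictly positive weights that the separation lemma provides; and the ``iterative choice of atoms that must eventually cycle'' is asserted, not constructed---no invariant is given that guarantees termination at a partial sum that is non-positive on its \emph{entire} space rather than merely on the atoms visited. As written, the argument stops exactly at the step you yourself call the technical heart, so the proposal does not establish the proposition.
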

\begin{proof}{\bf of Proposition~\ref{prop:productcoherent:SDG}~} 
Because of Lemma~\ref{lemma:coherenceiffLP4}, it suffices to prove~\ref{def:SDG:partialloss}. So consider any $f\in\desir_1\otimes\desir_2$ and assume \emph{ex absurdo} that $f\leq0$. We will prove that this leads to a contradiction.

Since $\desir_1$ and $\desir_2$ are coherent, they are closed with respect to positive scaling and finite sums. Therefore, and because $f\in\desir_1\otimes\desir_2=\E
\left(
\A_{1\to2}
\cup
\A_{2\to1}
\right)$, it follows from Equations~\eqref{eq:A12s} and~\eqref{eq:A21s} that
\vspace{2pt}
\begin{equation}\label{eq:fexplicit}
f=\sum_{i\in I}\ind{B_{1,i}}(X_1)f_{2,i}(X_2)+\sum_{j\in J}\ind{B_{2,j}}(X_2)f_{1,j}(X_1)+g,
\end{equation}
with $I$ and $J$ finite---possibly empty---index sets, with $B_{1,i}\in\nonemptypoweron{1}$ and $f_{2,i}\in\desir_2$ for all $i\in I$, with $B_{2,j}\in\nonemptypoweron{2}$ and $f_{1,j}\in\desir_1$ for all $j\in J$, with $g\geq0$, and where $g=0$ is only possible if $\abs{I}+\abs{J}>0$. 

Let us assume~\emph{ex absurdo} that $\abs{I}+\abs{J}=0$. Then on the one hand, since we know that  $g=0$ is only possible if $\abs{I}+\abs{J}>0$, it follows that $g\neq0$. On the other hand, $\abs{I}+\abs{J}=0$ also implies that $I=J=\emptyset$, and therefore, due to Equation~\eqref{eq:fexplicit}, that $f=g$. Since $g\geq0$ and $f\leq0$, this in turn implies that $g=0$, thereby contradicting the fact that $g\neq0$. Hence, it follows that at least one of the two \emph{ex absurdo} assumptions that we have so far made must be wrong. If $f\not\leq0$, then the proof is finished. For that reason, in the remainder of the proof, we can assume that $\abs{I}+\abs{J}\neq0$, and therefore, that $\abs{I}+\abs{J}>0$. The only \emph{ex absurdo} assumption that still remains is that $f\leq0$.

Now let $\{B_{1,k}\}_{k\in K}$ be the set consisting of those atoms of the algebra generated by $\{B_{1,i}\}_{i\in I}$ that belong to $\cup_{i\in I}B_{1,i}$ and, for all $k\in K$, let $f_{2,k}\coloneqq\sum_{i\in I\colon B_{1,k}\subseteq B_{1,i}}f_{2,i}$. The following properties are then easily verified. First, since $I$ is finite, $K$ is also finite. Secondly, $\abs{K}=0$ if and only if $\abs{I}=0$. Thirdly, for all $k\in K$, we have that $B_{1,k}\in\nonemptypoweron{1}$ and---since $\desir_1$ is coherent and therefore satisfies~\ref{def:SDG:convex}---that $f_{2,k}\in\desir_1$. Fourthly, $\sum_{k\in K}\ind{B_{1,k}}(X_1)f_{2,k}(X_2)$ is equal to $\sum_{i\in I}\ind{B_{1,i}}(X_1)f_{2,i}(X_2)$. Fiftly, the events in $\{B_{1,k}\}_{k\in K}$ are pairwise disjoint. For this reason, without loss of generality, we can assume the events $\{B_{1,i}\}_{i\in I}$ in Equation~\eqref{eq:fexplicit} to be pairwise disjoint. A completely similar argument leads us to conclude that the events $\{B_{2,j}\}_{j\in J}$ in Equation~\eqref{eq:fexplicit} can be assumed to be pairwise disjoint, again without loss of generality.


If $\{B_{1,i}\}_{i\in I}$ is a partition of $\states_1$, then we let $\Y_1\coloneqq I$. Otherwise, we let $\Y_1\coloneqq I\cup\{i^*\}$ and define $B_{1,i^*}\coloneqq\states_1\setminus\cup_{i\in I}B_{1,i}$. Similarly, we let $\Y_2\coloneqq J$ if $\{B_{2,j}\}_{j\in J}$ is a partition of $\states_2$, and let $\Y_2\coloneqq J\cup\{j^*\}$ and $B_{1,j^*}\coloneqq\states_2\setminus\cup_{j\in J}B_{2,j}$ otherwise.
Next, for every $i\in I$, we let $h_{2,i}$ be a gamble on $\Y_2$, defined by
\begin{equation}\label{eq:discretise2}
h_{2,i}(y_2)\coloneqq\sup\{f_{2,i}(x_2)\colon x_2\in B_{2,y_2}\}
~\text{ for all $y_2\in\Y_2$.}
\vspace{4pt}
\end{equation}
Similarly, for every $j\in J$, we let $h_{1,j}$ be a gamble on $\Y_1$, defined by
\begin{equation}\label{eq:discretise1}
h_{1,j}(y_1)\coloneqq\sup\{f_{1,j}(x_1)\colon x_1\in B_{1,y_1}\}
~\text{ for all $y_1\in\Y_1$.}
\vspace{2pt}
\end{equation}
Using these gambles on $\Y_1$ and $\Y_2$, we now construct a real-valued function $h$ on $\Y_1\times\Y_2$, defined by
\begin{equation}\label{eq:hforEcontradiction}
h(y_1,y_2)
\coloneqq
\sum_{i\in I}\ind{i}(y_1)h_{2,i}(y_2)+\sum_{j\in J}\ind{j}(y_2)h_{1,j}(y_1)
\text{~~for all $y_1\in\Y_1$ and $y_2\in\Y_2$}
\vspace{3pt}
\end{equation}
This function is non-positive, in the sense that $h\leq0$. In order to prove that, let us fix any $y_1\in\Y_1$ and $y_2\in\Y_2$. It then follows from Equations~\eqref{eq:discretise2} and~\eqref{eq:discretise1} that
\vspace{4pt}
\begin{equation*}
h(y_1,y_2)
=
\sum_{i\in I}\ind{i}(y_1)\sup_{x_2\in B_{2,y_2}}f_{2,i}(x_2)+\sum_{j\in J}\ind{j}(y_2)\sup_{x_1\in B_{1,y_1}}f_{1,j}(x_1).
\end{equation*}
Since $\ind{i}(y_1)$ can be non-zero for at most one $i\in I$ and $\ind{j}(y_2)$ can be non-zero for at most one $j\in J$, we know that each of the two summations on the right hand side contains at most one non-zero term. The suprema can therefore be moved outside of the summations, yielding
\begin{equation*}
h(y_1,y_2)
=
\sup_{x_1\in B_{1,y_1}}\sup_{x_2\in B_{2,y_2}}\left(\sum_{i\in I}\ind{i}(y_1)f_{2,i}(x_2)+\sum_{j\in J}\ind{j}(y_2)f_{1,j}(x_1)\right).
\end{equation*}
For the next step, we start by observing the following. For any $x_1\in B_{1,y_1}$ and any $i\in I$, since the sets $\{B_{1,i}\}_{i\in I}$ are pairwise disjoint, we know that $x_1\in B_{1,i}$ if and only if $y_1=i$, which implies that $\ind{i}(y_1)=\ind{B_{1,i}}(x_1)$. Similarly, for any $x_2\in B_{2,y_2}$ and any $j\in J$, since the sets $\{B_{2,j}\}_{j\in J}$ are pairwise disjoint, we know that $x_2\in B_{2,j}$ if and only if $y_2=j$, which implies that $\ind{j}(y_2)=\ind{B_{2,j}}(x_2)$. As an immediate consequence, it follows that
\begin{equation*}
h(y_1,y_2)
=
\sup_{x_1\in B_{1,y_1}}\sup_{x_2\in B_{2,y_2}}\left(\sum_{i\in I}\ind{B_{1,i}}(x_1)f_{2,i}(x_2)+\sum_{j\in J}\ind{B_{2,j}}(x_2)f_{1,j}(x_1)\right).
\end{equation*}
Finally, in combination with Equation~\eqref{eq:fexplicit}, this implies that
\begin{equation*}
h(y_1,y_2)
=
\sup_{x_1\in B_{1,y_1}}\sup_{x_2\in B_{2,y_2}}\left(
f(x_1,x_2)-g(x_1,x_2)
\right)
\leq0,
\end{equation*}
where, for the last inequality, we use the fact that $f\leq0$ and $g\geq0$. Since this true for every $y_1\in\Y_1$ and $y_2\in\Y_2$, it follows that $h\leq0$.

Now let $\A_1\coloneqq\{h_{1,j}\colon j\in J\}$ and $\A_2\coloneqq\{h_{2,i}\colon i\in I\}$ and assume~\emph{ex absurdo} that $\mathcal{H}_1\coloneqq\E(\A_1)$ and $\mathcal{H}_2\coloneqq\E(\A_2)$ are coherent sets of desirable gambles on $\Y_1$ and $\Y_2$, respectively. We will prove that that is impossible, by constructing a probability mass function $p$ on $\states_1\times\states_2$ such that the corresponding expectation of $h$ is both non-positive \emph{and}  positive, thereby obtaining a contradiction. In order to do that, we borrow an argument of De Cooman and Miranda (\citeyear[Proof of Proposition~15]{deCooman:2012vba}) that is based on a very useful lemma of them, which, in order to make this paper self-contained, is restated here in Lemma~\ref{lemma:GertandQuique}.

Since $\mathcal{H}_1$ is a coherent set of desirable gambles on $\Y_1$, it follows from Definition \ref{def:SDG}---and \ref{def:SDG:partialloss} in particular---that $0\notin\mathcal{H}_1=\E(\A_1)$. Therefore, and because $\Y_1$ and $J$---and hence also $\A_1$---are finite, it follows from Lemma~\ref{lemma:GertandQuique} that there is a probability mass function $p_1$ on $\Y_1$ such that $p_1(y_1)>0$ for all $y_1\in\Y_1$ and $\sum_{y_1\in\Y_1}p_1(y_1)h_{1,j}(y_1)$ for all $j\in J$. Using a completely analogous argument, we also infer that there is a probability mass function $p_2$ on $\Y_2$ such that $p_2(y_2)>0$ for all $y_2\in\Y_2$ and $\sum_{y_2\in\Y_2}p_2(y_2)h_{2,i}(y_2)$ for all $i\in I$. 

We now let $p$ be the probability mass function on $\Y_1\times\Y_2$ that is defined by $p(y_1,y_2)\coloneqq p_1(y_1)p_2(y_2)$ for all $y_1\in\Y_1$ and $y_1\in\Y_2$, and we let $E_p(h)$ be the expectation of $h$ with respect to $p$, as defined by
\begin{equation*}
E_p(h)
\coloneqq
\sum_{y_1\in\Y_1}
\sum_{y_2\in\Y_2}
p(y_1,y_2)h(y_1,y_2)
=
\sum_{y_1\in\Y_1}
\sum_{y_2\in\Y_2}
p_1(y_1)p_2(y_2)h(y_1,y_2).
\end{equation*}
Then on the one hand, since $h\leq0$, we have that $E_p(h)\leq0$. On the other hand, however, it follows from Equation~\eqref{eq:hforEcontradiction} that
\vspace{-3pt}
\begin{align*}
E_p(h)
&=\sum_{y_1\in\Y_1}
\sum_{y_2\in\Y_2}
p_1(y_1)p_2(y_2)\left(
\sum_{i\in I}\ind{i}(y_1)h_{2,i}(y_2)+\sum_{j\in J}\ind{j}(y_2)h_{1,j}(y_1)
\right)\\
&=\sum_{y_1\in\Y_1}
\sum_{y_2\in\Y_2}
p_1(y_1)p_2(y_2)
\sum_{i\in I}\ind{i}(y_1)h_{2,i}(y_2)+
\sum_{y_1\in\Y_1}
\sum_{y_2\in\Y_2}
p_1(y_1)p_2(y_2)
\sum_{j\in J}\ind{j}(y_2)h_{1,j}(y_1)\\
&=
\sum_{i\in I}
\sum_{y_1\in\Y_1}
p_1(y_1)
\ind{i}(y_1)
\sum_{y_2\in\Y_2}
p_2(y_2)
h_{2,i}(y_2)
+
\sum_{j\in J}
\sum_{y_2\in\Y_2}
p_2(y_2)
\ind{j}(y_2)
\sum_{y_1\in\Y_1}
p_1(y_1)
h_{1,j}(y_1)\\
&=
\sum_{i\in I}
p_1(i)
\sum_{y_2\in\Y_2}
p_2(y_2)
h_{2,i}(y_2)
+
\sum_{j\in J}
p_2(j)
\sum_{y_1\in\Y_1}
p_1(y_1)
h_{1,j}(y_1).
\vspace{4pt}
\end{align*}
For every $i\in I$, it follows from the properties of $p_1$ that the corresponding term in this summation is positive. Similarly, for every $j\in J$, it follows from the properties of $p_2$ that the corresponding term in this summation is strictly positive. Since $\abs{I}+\abs{J}>0$, this implies that $E_p(h)>0$, thereby contradicting the fact that $E_p(h)\leq0$. Hence, it follows that one of the two remaining \emph{ex absurdo} assumptions is wrong. If $f\leq0$, then the proof is finished. Therefore, in the remainder of the proof, we can assume that there is at least one $i\in\{1,2\}$ for which $\mathcal{H}_i$ is incoherent. Without loss of generality, symmetry allows us to assume that $i=1$, that is, that $\mathcal{H}_1$ is incoherent. The only \emph{ex absurdo} assumption that still remains is that $f\leq0$.

Since $\mathcal{H}_1$ is incoherent, it follows from Lemma~\ref{lemma:coherenceiffLP4} that there is some $h^*\in\mathcal{H}_1$ such that $h^*\leq0$. Furthermore, since $h^*\in\mathcal{H}_1$, Equation~\eqref{eq:natextop} implies that $h^*=\lambda g^*+\sum_{j\in J}\lambda_j h_{1,j}$, for some $\lambda\in\realsnonneg$ and $g^*\in\mathcal{G}_{>0}(\Y_1)$ and, for all $j\in J$, some $\lambda_j\in\realsnonneg$, with $\lambda+\sum_{j\in J}\lambda_j>0$. If $\lambda_j=0$ for all $j\in J$, then $\lambda>0$ and $g^*=\nicefrac{1}{\lambda}h^*\leq0$, which is impossible because $g^*\in\mathcal{G}_{>0}(\Y_1)$. Therefore, we know that there is at least one $j\in J$ such that $\lambda_j>0$.

Now let $f_1\coloneqq\sum_{j\in J}\lambda_jf_{1,j}$ and fix any $x_1^*\in\states_1$. Since the events in $\{B_{1,y_1}\}_{y_1\in \Y_1}$ are pairwise disjoint, there will then be a unique $y_1^*\in\Y_1$ such that $x_1^*\in B_{1,y_1^*}$. For this particular choice of $y_1^*$, we then find that
\begin{equation*}
f_1(x_1^*)
=
\sum_{j\in J}\lambda_jf_{1,j}(x_1^*)
\leq
\sum_{j\in J}\lambda_j
\sup_{x_1\in B_{1,y_1^*}}
f_{1,j}(x_1)
=
\sum_{j\in J}\lambda_j
h_{1,j}(y_1^*)
=h^*(y_1^*)-\lambda g^*(y_1^*)\leq0,
\end{equation*}
where the first equality follows from Equation~\eqref{eq:discretise1} and the second inequality follows from the fact that $h^*\leq0$, $\lambda\geq0$ and $g^*\in\mathcal{G}_{>0}(\Y_1)$. Since this is true for every $x_1^*\in\states_1$, we infer that $f_1\leq0$. However, on the other hand, since there is at least one $j\in J$ such that $\lambda_j>0$, and because $f_{1,j}\in\desir_1$ for all $j\in J$, the coherence of $\desir_1$ implies that $f_1\in\desir_1$ and therefore, because of~\ref{def:SDG:partialloss}, that $f_1\not\leq0$. From this contradiction, it follows that one of our \emph{ex absurdo} assumptions must be false. Since the only remaining \emph{ex absurdo} assumption is that $f\leq0$, this concludes the proof.
\end{proof}
\vspace{-16pt}

\begin{lemma}\label{lemma:GertandQuique}\cite[Lemma~2]{deCooman:2012vba}
Let $\Omega$ be a finite set and consider some finite subset $\A$ of $\mathcal{G}(\Omega)$. Then $0\notin\E(\A)$ if and only if there is a probability mass function $p$ on $\Omega$ such that $p(\omega)>0$ for all $\omega\in\Omega$ and $\sum_{\omega\in\Omega}p(\omega)f(\omega)>0$ for all $f\in\A$.
\end{lemma}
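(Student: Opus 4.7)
The plan is to recognise this as a finite-dimensional theorem-of-alternatives statement. Since $\Omega$ is finite, $\mathcal{G}(\Omega)$ is just $\reals^\Omega$, and $\E(\A)=\posi(\A\cup\gamblespos)$ is a convex cone that---after a small trick---can be replaced by one with finitely many generators. The ``if'' direction is essentially immediate; the ``only if'' direction reduces to a strict separating hyperplane argument in $\reals^\Omega$.

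For the ``if'' direction, I would assume a probability mass function $p$ with $p(\omega)>0$ for all $\omega\in\Omega$ and $\sum_{\omega\in\Omega}p(\omega)f(\omega)>0$ for all $f\in\A$, and first observe that the same positivity automatically holds for every $g\in\gamblespos$, since $p>0$ and $g\geq0$ with $g\neq0$ force $\sum_\omega p(\omega)g(\omega)>0$. Then for any $h\in\E(\A)$, Equations~\eqref{eq:posi} and~\eqref{eq:natextop} write $h=\sum_{i=1}^n\lambda_ih_i$ with $n\geq1$, $\lambda_i\in\realspos$ and $h_i\in\A\cup\gamblespos$, and therefore $\sum_\omega p(\omega)h(\omega)=\sum_i\lambda_i\sum_\omega p(\omega)h_i(\omega)>0$, so $h\neq0$; this yields $0\notin\E(\A)$.

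For the ``only if'' direction, I would assume $0\notin\E(\A)$ and introduce the finite set $\A'\coloneqq\A\cup\{\ind{\{\omega\}}\colon\omega\in\Omega\}$. Since each indicator $\ind{\{\omega\}}$ lies in $\gamblespos$, we have $\A'\subseteq\A\cup\gamblespos$, so Lemma~\ref{lemma:nestedpropsofposandE} gives $\posi(\A')\subseteq\E(\A)$ and hence $0\notin\posi(\A')$. Every convex combination of elements of $\A'$ has weights summing to $1$, so after dropping zero-weight terms it is a strictly positive combination; thus $\mathrm{conv}(\A')\subseteq\posi(\A')$ and $0\notin\mathrm{conv}(\A')$. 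Since $\A$ and $\Omega$ are finite, $\mathrm{conv}(\A')$ is the convex hull of finitely many points and is therefore a compact convex subset of the finite-dimensional space $\reals^\Omega$; the strict separating hyperplane theorem then supplies a $p\in\reals^\Omega$ with $\sum_\omega p(\omega)f(\omega)>0$ for every $f\in\A'$. Taking $f=\ind{\{\omega\}}$ yields $p(\omega)>0$ for every $\omega$, and taking $f\in\A$ yields $\sum_\omega p(\omega)f(\omega)>0$; normalising by $\sum_\omega p(\omega)>0$ turns $p$ into a probability mass function with the required properties. The one delicate step is the strict separation, but the compactness of $\mathrm{conv}(\A')$ makes this a standard finite-dimensional application of the separating hyperplane theorem, so I do not expect any genuine obstacle.
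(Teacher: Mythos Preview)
Your argument is correct. The ``if'' direction is straightforward linearity, and for the ``only if'' direction your reduction to the finitely generated cone $\posi(\A')$ with $\A'=\A\cup\{\ind{\{\omega\}}:\omega\in\Omega\}$, followed by strict separation of $0$ from the compact convex hull $\mathrm{conv}(\A')$, is clean and complete; the indicators in $\A'$ then force the separating functional to be strictly positive componentwise, after which normalisation gives the required probability mass function.

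There is nothing to compare against, however: the paper does not prove this lemma. It is quoted verbatim from \cite[Lemma~2]{deCooman:2012vba} and used as a black box inside the proof of Proposition~\ref{prop:productcoherent:SDG}. Your separating-hyperplane route is the standard one for this finite-dimensional alternative-type statement and is essentially what one would expect the cited reference to contain.
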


\begin{proposition}\label{prop:productindependent:SDG}
$\desir_1\otimes\desir_2$ is an independent product of $\desir_1$ and $\desir_2$.
\end{proposition}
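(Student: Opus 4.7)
The plan is to combine the coherence of $\desir_1 \otimes \desir_2$, already established in Proposition~\ref{prop:productcoherent:SDG}, with a single unified equivalence that yields both the marginal property and the epistemic-independence property in one stroke. Concretely, for $\{i,j\} = \{1,2\}$, I would prove
\begin{equation*}
f_i \in \desir_i \iff f_i(X_i)\ind{B_j}(X_j) \in \desir_1 \otimes \desir_2 \text{~~for every $B_j \in \B_j \cup \{\states_j\}$.}
\end{equation*}
Specialising to $B_j = \states_j$ gives $\marg_i(\desir_1 \otimes \desir_2) = \desir_i$, and specialising to $B_j \in \B_j$ then gives $\marg_i(\desir_1 \otimes \desir_2 \rfloor B_j) = \desir_i = \marg_i(\desir_1 \otimes \desir_2)$, which is epistemic independence in the sense of Definition~\ref{def:irrSDG}.

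The forward implication is immediate from the construction of $\desir_1 \otimes \desir_2$: if $f_i \in \desir_i$ and $B_j \in \B_j \cup \{\states_j\}$, then $f_i(X_i)\ind{B_j}(X_j)$ belongs to $\A_{j \to i}$ by the very Equations~\eqref{eq:A12s}--\eqref{eq:A21s}, and hence to $\desir_1 \otimes \desir_2$.

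For the reverse implication I would argue by contradiction, using a swap-and-cancel trick. Suppose $f_i(X_i)\ind{B_j}(X_j) \in \desir_1 \otimes \desir_2$ but $f_i \notin \desir_i$. If $f_i = 0$, then $f_i(X_i)\ind{B_j}(X_j) = 0$ lies in the coherent set $\desir_1 \otimes \desir_2$, contradicting~\ref{def:SDG:partialloss}. Otherwise $f_i \neq 0$, so by Lemma~\ref{lemma:extendD} the enlarged set $\desir_i' \coloneqq \E(\desir_i \cup \{-f_i\})$ is a coherent set of desirable gambles on $\states_i$. Forming the symmetric product in which the $i$-th local model is replaced by $\desir_i'$ and reapplying Proposition~\ref{prop:productcoherent:SDG}, we obtain a coherent set $\desir'$ on $\states_1 \times \states_2$. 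By Lemma~\ref{lemma:nestedpropsofposandE}, $\desir' \supseteq \desir_1 \otimes \desir_2$, so $f_i(X_i)\ind{B_j}(X_j) \in \desir'$. But $-f_i \in \desir_i'$ together with $B_j \in \B_j \cup \{\states_j\}$ also places $-f_i(X_i)\ind{B_j}(X_j)$ in the relevant $\A_{j \to i}$ built from $\desir_i'$, and hence in $\desir'$ as well. Closure of $\desir'$ under sums (property~\ref{def:SDG:convex}) then yields $0 \in \desir'$, contradicting~\ref{def:SDG:partialloss}.

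The real obstacle is conceptual rather than calculational: one has to notice that the intricate discretisation argument hidden inside Proposition~\ref{prop:productcoherent:SDG} already does all the heavy lifting, so that both marginalisation and epistemic independence can be recovered cheaply, by enlarging one of the local models and reapplying the same coherence result. Once this reduction is in place, no further partition refinement or probability-mass construction is required.
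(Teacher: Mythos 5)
Your proposal is correct and follows essentially the same route as the paper's own proof: reduce both the marginalisation and independence conditions to the single equivalence $f_i\in\desir_i\asa f_i(X_i)\ind{B_j}(X_j)\in\desir_1\otimes\desir_2$, get the forward direction for free from $\A_{j\to i}$, and get the converse by enlarging $\desir_i$ with $-f_i$ via Lemma~\ref{lemma:extendD}, reapplying Proposition~\ref{prop:productcoherent:SDG}, and cancelling to produce $0$ in a coherent set. The only cosmetic difference is that the paper rules out $f_i=0$ up front using the coherence of $\desir_1\otimes\desir_2$ rather than inside the contradiction argument.
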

\begin{proof}{\bf of Proposition~\ref{prop:productindependent:SDG}}
For ease of notation, let $\desir\coloneqq\desir_1\otimes\desir_2$. Because of symmetry, it clearly suffices to prove that
\begin{equation*}
(\forall B_2\in\B_2)~~\desir_1=\marg_1(\desir)=\marg_1(\desir\rfloor B_2),
\vspace{6pt}
\end{equation*}
which, since $\marg_1(\desir)=\marg_1(\desir\rfloor\states_2)$, is equivalent to proving that, for all $f_1\in\gambleson{1}$ and $B_2\in\B_2\cup\{\states_2\}$,
\begin{equation*}
f_1(X_1)\ind{B_2}(X_2)\in\desir
~\asa~
f_1\in\desir_1.
\vspace{5pt}
\end{equation*}
Since $f_1\in\desir_1$ implies that $f_1(X_1)\ind{B_2}(X_2)\in\A_{2\to1}\subseteq\desir$ for all $B_1\in\B_2\cup\{\states_2\}$, the converse implication holds trivially. So consider any $f_1\in\gambleson{1}$ and $B_2\in\B_2\cup\{\states_2\}$ such that $f_1(X_1)\ind{B_2}(X_2)\in\desir$. Since we know from Proposition~\ref{prop:productcoherent:SDG} that $\desir$ is coherent, this implies that $f_1\neq0$. It remains to prove that $f_1\in\desir_1$.

Assume \emph{ex absurdo} that $f_1\notin\desir_1$. Then since $f_1\neq0$, $\desir_1^{\bullet}\coloneqq\E(\desir_1\cup\{-f_1\})$ is a coherent set of desirable gambles on $\states_1$ because of Lemma~\ref{lemma:extendD}, and therefore, if we let
\vspace{4pt}
\begin{equation}\label{eq:Abullet}
\A_{2\to1}^{\bullet}
\coloneqq
\left\{
f'_1(X_1)\ind{B'_2}(X_2)
\colon
f'_1\in\desir_1^{\bullet}, 
B'_2\in\B_2\cup\{\states_2\}
\right\},
\vspace{4pt}
\end{equation}
it follows from Proposition~\ref{prop:productcoherent:SDG} that $\desir_1^\bullet\otimes\desir_2\coloneqq\E(\A_{1\to2}\cup\A^\bullet_{2\to1})$ is a coherent set of desirable gambles on $\states_1\times\states_2$. Now on the one hand, since $-f_1\in\desir_1^\bullet$, it follows from Equation~\eqref{eq:Abullet} that $-f_1(X_1)\ind{B_2}(X_2)\in\A_{2\to1}^{\bullet}\subseteq\desir_1^\bullet\otimes\desir_2$. On the other hand, since $\desir_1\subseteq\desir_1^{\bullet}$ implies that $\desir_1\otimes\desir_2\subseteq\desir_1^\bullet\otimes\desir_2$, we infer from $f_1(X_1)\ind{B_2}(X_2)\in\desir$ that $f_1(X_1)\ind{B_2}(X_2)\in\desir_1^\bullet\otimes\desir_2$. Since $\desir_1^\bullet\otimes\desir_2$ is coherent, this implies that
\vspace{3pt}
\begin{equation*}
0=f_1(X_1)\ind{B_2}(X_2)-f_1(X_1)\ind{B_2}(X_2)\in
\desir_1^\bullet\otimes\desir_2,
\vspace{3pt}
\end{equation*}
which contradicts~\ref{def:SDG:partialloss}.
\end{proof}
\vspace{-6pt}

\begin{proof}{\bf of Theorem~\ref{theo:natext:SDG}~}
Since we know from Proposition~\ref{prop:productindependent:SDG} that $\desir_1\otimes\desir_2$ is an independent product of $\desir_1$ and $\desir_2$, it suffices to prove that any other such independent product of $\desir_1$ and $\desir_2$ is a superset of $\desir_1\otimes\desir_2$.

So let $\desir$ be any independent product of $\desir_1$ and $\desir_2$. Definition~\ref{def:subsetindependentproduct} then implies that $\desir$ is coherent and that $\A_{1\to2}
\cup
\A_{2\to1}
\subseteq\desir$. Hence, we find that
\begin{equation*}
\desir_1\otimes\desir_2
=
\E
\left(
\A_{1\to2}
\cup
\A_{2\to1}
\right)
\subseteq\E(\desir)=\desir, 
\end{equation*}
where the inclusion follows from Lemma~\ref{lemma:nestedpropsofposandE} and the final equality from Lemma~\ref{lemma:natextDisD}.
\end{proof}
\vspace{-13pt}

\subsubsection{The Conditional Lower Previsions Part}
\vspace{5pt}

\begin{proposition}\label{prop:productcoherent:LP}
$\lp_1\otimes\lp_2$ is a coherent conditional probability on $\C(\states_1\times\states_2)$.
\end{proposition}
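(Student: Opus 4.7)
The plan is to prove this result as an essentially immediate consequence of earlier results---most of the work has already been done. The key observation is that our Definition~\ref{def:cohlp} characterises coherence of a conditional lower prevision precisely as arising from some coherent set of desirable gambles via Equation~\eqref{eq:LPfromD}. Since $\lp_1\otimes\lp_2$ is defined in Equation~\eqref{eq:indnatex:LP} to be exactly $\lp_\desir$ with $\desir=\E(\lp_1)\otimes\E(\lp_2)$, all that is needed is to verify that this $\desir$ is a coherent set of desirable gambles on $\states_1\times\states_2$.

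To establish that, I would proceed in two short steps. First, apply Proposition~\ref{prop:smallestSDGfromLP} to each of the local coherent conditional lower previsions $\lp_1$ and $\lp_2$: this yields that $\E(\lp_1)$ and $\E(\lp_2)$ are coherent sets of desirable gambles on $\states_1$ and $\states_2$, respectively. Second, apply Proposition~\ref{prop:productcoherent:SDG} to these two coherent sets of desirable gambles: this gives that $\desir=\E(\lp_1)\otimes\E(\lp_2)$ is itself a coherent set of desirable gambles on $\states_1\times\states_2$.

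Once coherence of $\desir$ is in hand, Definition~\ref{def:cohlp} immediately yields that $\lp_\desir$---and hence, by definition, $\lp_1\otimes\lp_2$---is a coherent conditional lower prevision on the full domain $\C(\states_1\times\states_2)$. There is no real obstacle here: the technical heavy lifting has been delegated upstream, notably to Proposition~\ref{prop:productcoherent:SDG}, whose proof contains the intricate construction with discretisations and Lemma~\ref{lemma:GertandQuique}. The present proposition itself is thus a one- or two-line argument that simply chains together Proposition~\ref{prop:smallestSDGfromLP}, Proposition~\ref{prop:productcoherent:SDG}, and Definition~\ref{def:cohlp}. Note also that the statement's phrase ``coherent conditional probability'' should presumably be read as ``coherent conditional lower prevision,'' which is what the chain of definitions and earlier results delivers.
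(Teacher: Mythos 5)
Your proof is correct and follows exactly the same route as the paper's own: Proposition~\ref{prop:smallestSDGfromLP} gives coherence of $\E(\lp_1)$ and $\E(\lp_2)$, Proposition~\ref{prop:productcoherent:SDG} gives coherence of $\E(\lp_1)\otimes\E(\lp_2)$, and Definition~\ref{def:cohlp} then yields the claim. Your reading of ``coherent conditional probability'' as a typo for ``coherent conditional lower prevision'' is also the right one.
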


\begin{proof}{\bf of Proposition~\ref{prop:productcoherent:LP}~}
For all $i\in\{1,2\}$, since $\lp_i$ is a coherent conditional lower prevision on $\C_i$, it follows from Proposition~\ref{prop:smallestSDGfromLP} that $\E(\lp_i)$ is a coherent set of desirable gambles on $\states_i$. Therefore, Proposition~\ref{prop:productcoherent:SDG} implies that $\E(\lp_1)\otimes\E(\lp_2)$ is a coherent set of desirable gambles on $\states_1\times\states_2$. The result now follows from Definition~\ref{def:cohlp}.
\end{proof}
\vspace{-16pt}

\begin{proposition}\label{prop:independent:localnatex}
Consider two indexes $i$ and $j$ such that $\{i,j\}=\{1,2\}$. Then for any $f_i\in\gambleson{i}$ and $B_i\in\nonemptypoweron{i}$ and any $B_j\in\B_j$, we have that
\begin{equation}\label{eq:prop:independent:localnatex}
(\lp_1\otimes\lp_2)(f_i\vert B_i\cap B_j)
=
(\lp_1\otimes\lp_2)(f_i\vert B_i)
=
\natexLP_i(f_i\vert B_i).
\end{equation}
\end{proposition}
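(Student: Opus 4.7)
The plan is to reduce the claim entirely to facts about the independent natural extension of sets of desirable gambles that have already been established, namely Proposition~\ref{prop:productindependent:SDG}, and then translate back via the construction~\eqref{eq:indnatex:LP}. By the symmetric roles of the two indices, I would restrict attention throughout to $i=1$, $j=2$. Throughout the argument let $\desir\coloneqq\E(\lp_1)\otimes\E(\lp_2)$, so that $(\lp_1\otimes\lp_2)(\cdot\vert\cdot)=\lp_\desir(\cdot\vert\cdot)$ by Equation~\eqref{eq:indnatex:LP}. The key structural input is that, by Proposition~\ref{prop:productindependent:SDG}, $\desir$ is an independent product of $\E(\lp_1)$ and $\E(\lp_2)$; so $\marg_1(\desir)=\E(\lp_1)$, and $\marg_1(\desir\rfloor B_2)=\marg_1(\desir)$ for every $B_2\in\B_2$.

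Given this setup, the central step is simply to unfold $\lp_\desir(f_1\vert B_1\cap B_2)$ using Equation~\eqref{eq:LPfromD}. Because $\ind{B_1\cap B_2}$, interpreted on $\states_1\times\states_2$ via the cylindrical-extension convention, equals $\ind{B_1}(X_1)\ind{B_2}(X_2)$, I can rewrite
\begin{equation*}
\lp_\desir(f_1\vert B_1\cap B_2)
=\sup\{\mu\in\reals\colon [f_1-\mu]\ind{B_1}(X_1)\ind{B_2}(X_2)\in\desir\},
\end{equation*}
and the very definition of $\marg_1(\desir\rfloor B_2)$ recasts this as $\lp_{\marg_1(\desir\rfloor B_2)}(f_1\vert B_1)$. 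Invoking epistemic independence of $\desir$ at the event $B_2\in\B_2$ gives $\marg_1(\desir\rfloor B_2)=\marg_1(\desir)=\E(\lp_1)$, so the expression equals $\lp_{\E(\lp_1)}(f_1\vert B_1)$, which by Equation~\eqref{eq:naturalextension} is exactly $\natexLP_1(f_1\vert B_1)$. This yields the first equality in~\eqref{eq:prop:independent:localnatex}.

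The remaining equality $(\lp_1\otimes\lp_2)(f_1\vert B_1)=\natexLP_1(f_1\vert B_1)$ follows from the same chain of rewrites with $B_2$ replaced throughout by $\states_2$: we pass from $\lp_\desir(f_1\vert B_1)$ to $\lp_{\marg_1(\desir)}(f_1\vert B_1)$ using the definition of the marginal, and then to $\lp_{\E(\lp_1)}(f_1\vert B_1)=\natexLP_1(f_1\vert B_1)$ using the marginalisation property of $\desir$ from Proposition~\ref{prop:productindependent:SDG}. I do not anticipate any real obstacle: essentially all the work is carried by Proposition~\ref{prop:productindependent:SDG}, and what remains is a careful translation between the set-of-desirable-gambles and conditional-lower-prevision languages. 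The only tiny subtlety is the implicit well-definedness assumption $B_1\cap B_2\neq\emptyset$, without which $(f_1,B_1\cap B_2)\notin\C(\states_1\times\states_2)$ and the first equality would be vacuous rather than meaningful.
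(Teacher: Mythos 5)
Your proposal is correct and follows essentially the same route as the paper: both reduce the claim to Proposition~\ref{prop:productindependent:SDG} via $\desir=\E(\lp_1)\otimes\E(\lp_2)$, rewrite the conditional lower previsions through Equation~\eqref{eq:LPfromD} using $\ind{B_1\cap B_2}=\ind{B_1}(X_1)\ind{B_2}(X_2)$, and then apply epistemic independence and the marginalisation property to land on $\lp_{\E(\lp_1)}(f_1\vert B_1)=\natexLP_1(f_1\vert B_1)$. (The well-definedness worry you raise at the end is moot: under the cylindrical identification $B_1\cap B_2=B_1\times B_2$, which is automatically non-empty since $B_1$ and $B_2$ both are.)
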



\begin{proof}{\bf of Proposition~\ref{prop:independent:localnatex}~}
For all $i\in\{1,2\}$, since $\lp_i$ is a coherent conditional lower prevision on $\C_i$, it follows from Proposition~\ref{prop:smallestSDGfromLP} that $\E(\lp_i)$ is a coherent set of desirable gambles on $\states_i$. Therefore, we infer from Proposition~\ref{prop:productindependent:SDG} that $\E(\lp_1)\otimes\E(\lp_2)$ is an independent product of $\E(\lp_1)$ and $\E(\lp_2)$.
For ease of notation, we now let $\lp\coloneqq\lp_1\otimes\lp_2$ and $\desir\coloneqq\E(\lp_1)\otimes\E(\lp_2)$. As we know from Equation~\eqref{eq:indnatex:LP}, $\lp$ is then equal to $\lp_\desir$. Furthermore, since $\desir$ is an independent product of $\E(\lp_1)$ and $\E(\lp_2)$, we know that $\desir$ is epistemically independent and that it has $\E(\lp_1)$ and $\E(\lp_2)$ as its marginals.

We are now ready to prove Equation~\eqref{eq:prop:independent:localnatex}. In order to do that, we fix any two indexes $i$ and $j$ such that $\{i,j\}=\{1,2\}$, any $f_i\in\gambleson{i}$ and $B_i\in\nonemptypoweron{i}$ and any $B_j\in\B_j$. We start by proving the first equality. Since $\desir$ is epistemically independent, we know that
\begin{align*}
[f_i-\mu]\ind{B_i}\in\desir
&\asa
[f_i-\mu]\ind{B_i}\in\marg_i(\desir)\\
&\asa
[f_i-\mu]\ind{B_i}\in\marg_i(\desir\vert B_j)
\asa
[f_i-\mu]\ind{B_i}\ind{B_j}\in\desir
\asa
[f_i-\mu]\ind{B_i\cap B_j}\in\desir
\end{align*}
for all $\mu\in\reals$,
and therefore, we find that
\begin{equation*}
\lp(f_i\vert B_i)
=\sup\big\{\mu\in\reals\colon[f_i-\mu]\ind{B_i}\in\desir\big\}
=\sup\big\{\mu\in\reals\colon[f_i-\mu]\ind{B_i\cap B_j}\in\desir\big\}
=
\lp(f_i\vert B_i\cap B_j).
\end{equation*}
Next, we prove the second equality of Equation~\eqref{eq:prop:independent:localnatex}. Since $\desir$ has $\E(\lp_1)$ and $\E(\lp_2)$ as its marginals, we know that
\vspace{-6pt}
\begin{equation*}
[f_i-\mu]\ind{B_i}\in\desir
\asa
[f_i-\mu]\ind{B_i}\in\marg_i(\desir)
\asa
[f_i-\mu]\ind{B_i}\in\E(\lp_i)
\vspace{3pt}
\end{equation*}
for all $\mu\in\reals$, and therefore, we find that
\begin{equation*}
\lp(f_i\vert B_i)
=
\sup\big\{\mu\in\reals\colon[f_i-\mu]\ind{B_i}\in\desir\big\}
=
\sup\big\{\mu\in\reals\colon[f_i-\mu]\ind{B_i}\in\E(\lp_i)\big\}
=\natexLP_i(f_i\vert B_i),
\end{equation*}
using Equation~\ref{eq:naturalextension} to establish the last equality.
\end{proof}
\vspace{-16pt}

\begin{proposition}\label{prop:productindependent:LP}
The restriction of $\lp_1\otimes\lp_2$ to $\C$ is an independent product of $\lp_1$ and $\lp_2$.
\end{proposition}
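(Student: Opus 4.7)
The plan is to verify the three conditions of Definition~\ref{def:independentproduct:LP} for the restriction of $\lp_1\otimes\lp_2$ to $\C$: namely, (i) coherence on $\C$, (ii) that $\lp_1$ and $\lp_2$ appear as its marginals, and (iii) epistemic independence. Each of these will follow almost immediately from results already established in the excerpt, so the proof is essentially a matter of invoking them in the right order.

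First, for coherence on $\C$, I would start from Proposition~\ref{prop:productcoherent:LP}, which tells us that $\lp_1\otimes\lp_2$ is coherent on $\C(\states_1\times\states_2)$. Since $\C\subseteq\C(\states_1\times\states_2)$, the restriction of $\lp_1\otimes\lp_2$ to $\C$ is immediately coherent: if $\desir$ is a coherent set of desirable gambles with $\lp_{\desir}$ coinciding with $\lp_1\otimes\lp_2$ on $\C(\states_1\times\states_2)$, then it also coincides with the restriction on $\C$, so Definition~\ref{def:cohlp} is satisfied.

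Second, for the marginal condition, fix any $i\in\{1,2\}$ and any $(f_i,B_i)\in\C_i$. The second equality of Proposition~\ref{prop:independent:localnatex} gives $(\lp_1\otimes\lp_2)(f_i\vert B_i)=\natexLP_i(f_i\vert B_i)$, and Proposition~\ref{prop:naturalextension:full} tells us that $\natexLP_i$ coincides with $\lp_i$ on $\C_i$, so $(\lp_1\otimes\lp_2)(f_i\vert B_i)=\lp_i(f_i\vert B_i)$. Hence the restriction to $\C$ has $\lp_1$ and $\lp_2$ as its marginals.

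Third, for epistemic independence, fix any $i,j$ with $\{i,j\}=\{1,2\}$, any $(f_i,B_i)\in\C$ of the local form (a gamble on $\states_i$ conditioned on a non-empty subset of $\states_i$), and any $B_j\in\B_j$. The first equality of Proposition~\ref{prop:independent:localnatex} directly yields $(\lp_1\otimes\lp_2)(f_i\vert B_i)=(\lp_1\otimes\lp_2)(f_i\vert B_i\cap B_j)$, which is exactly Definition~\ref{def:epistemicindependence:LP}. Note that the independence of the domain $\C$ (Equation~\eqref{eq:independentdomain}) guarantees $(f_i,B_i\cap B_j)\in\C$ whenever $(f_i,B_i)\in\C$, so both sides of this equation are well-defined on $\C$. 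There is no real obstacle here: the entire argument is a three-line assembly of Propositions~\ref{prop:productcoherent:LP},~\ref{prop:independent:localnatex} and~\ref{prop:naturalextension:full}, since the substantive work has already been done at the level of sets of desirable gambles.
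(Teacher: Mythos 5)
Your proposal is correct and follows essentially the same route as the paper's own proof: coherence of the restriction via Proposition~\ref{prop:productcoherent:LP} and Definition~\ref{def:cohlp}, epistemic independence via the first equality of Proposition~\ref{prop:independent:localnatex}, and the marginal condition via its second equality combined with the fact that $\natexLP_i$ coincides with $\lp_i$ on $\C_i$. The only cosmetic difference is that you cite Proposition~\ref{prop:naturalextension:full} for that last fact where the paper invokes Equation~\eqref{eq:naturalextension} and Proposition~\ref{prop:smallestSDGfromLP}; these are interchangeable.
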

\begin{proof}{\bf of Proposition~\ref{prop:productindependent:LP}~}
Since we know from Proposition~\ref{prop:productcoherent:LP} that $\lp_1\otimes\lp_2$ is a coherent lower prevision on $\C(\states_1\times\states_2)$, it follows from Definition~\ref{def:cohlp} that its restriction to $\C$ is coherent as well. Due to Definition~\ref{def:independentproduct:LP}, it remains to show that this restriction of $\lp_1\otimes\lp_2$ to $\C$ is epistemically independent and that it coincides with $\lp_1$ and $\lp_2$ on their domain. Epistemic independence follows trivially from Definition~\ref{def:epistemicindependence:LP} and Proposition~\ref{prop:independent:localnatex}. Hence, it remains to prove that the restriction of $\lp_1\otimes\lp_2$ to $\C$ coincides with $\lp_1$ and $\lp_2$ on their domain, or equivalently, that
\begin{equation*}
(\lp_1\otimes\lp_2)(f_i\vert B_i)=\lp_i(f_i\vert B_i)
\text{~~for all $i\in\{1,2\}$ and $(f_i,B_i)\in\C_i$.}
\end{equation*}
So fix any $i\in\{1,2\}$ and $(f_i,B_i)\in\C_i$. We then find that indeed, as desired,
\begin{equation*}
(\lp_1\otimes\lp_2)(f_i\vert B_i)
=\natexLP_i(f_i\vert B_i)
=\lp_i(f_i\vert B_i),
\end{equation*}
where the first equality follows from Proposition~\ref{prop:independent:localnatex} and the second equality follows from Equation~\eqref{eq:naturalextension} and Proposition~\ref{prop:smallestSDGfromLP}.
\end{proof}

\begin{proof}{\bf of Theorem~\ref{theo:natext:LP}~}
Since we know from Proposition~\ref{prop:productindependent:LP} that the restriction of $\lp_1\otimes\lp_2$ to $\C$ is an independent product of $\lp_1$ and $\lp_2$, it suffices to prove that any other such independent product of $\lp_1$ and $\lp_2$ dominates $\lp_1\otimes\lp_2$ on $\C$.

So let $\lp$ be any independent product of $\lp_1$ and $\lp_2$. 
Definition~\ref{def:independentproduct:LP} then implies that $\lp$ is an epistemically independent coherent conditional lower prevision on $\C$ that coincides with $\lp_1$ and $\lp_2$ on their domain. Let $\A_{\lp}$ be the corresponding set of gambles, as defined by Equation~\eqref{eq:AfromLP}, and let \mbox{$\desir\coloneqq\E(\lp)=\E(\A_{\lp})$}. We then know from Proposition~\ref{prop:smallestSDGfromLP} that $\desir$ is a coherent set of desirable gambles on $\states_1\times\states_2$ and that $\lp_\desir$ coincides with $\lp$ on $\C$.
In the remainder of this proof, we will show that $\E(\lp_1)\otimes\E(\lp_2)\subseteq\desir$. Because of Equation~\eqref{eq:indnatex:LP}, this clearly implies that $\lp_\desir(f\vert B)\geq(\lp_1\otimes\lp_2)(f\vert B)$ for all $(f,B)\in\C$. Since $\lp_\desir$ coincides with $\lp$ on $\C$, this implies that $\lp$ dominates $\lp_1\otimes\lp_2$ on $\C$, thereby concluding the proof.

Let $\desir_1\coloneqq\E(\lp_1)$ and let $\A_{2\to1}$ be the corresponding set of gambles on $\states_1\times\states_2$, as defined by Equation~\eqref{eq:A21s}. We will now prove that $\A_{2\to1}\subseteq\desir$. So consider any $f_1\in\desir_1$ and any $B_2\in\B_2\cup\{\states_2\}$. We need to prove that $f_1(X_1)\ind{B_2}(X_2)\in\desir$. Since $f_1\in\desir_1=\E(\lp_1)=\posi(\A_{\lp_1}\cup\mathcal{G}_{>0}(\states_1))$, it follows from Equation~\eqref{eq:posi} that there are $n\in\nats$ and, for all $i\in\{1,\dots,n\}$, $\lambda_i\in\reals_{>0}$ and $g_i\in\A_{\lp_1}\cup\mathcal{G}_{>0}(\states_1)$ such that $f_1=\sum_{i=1}^n\lambda_ig_i$.

For any $i\in\{1,\dots,n\}$, we now let $h_i(X_1,X_2)\coloneqq g_i(X_1)\ind{B_2}(X_2)\in\mathcal{G}(\states_1\times\states_2)$. As we will show, this gamble $h_i$ belongs to $\desir$. We consider two cases: $g_i\in\mathcal{G}_{>0}(\states_1)$ and $g_i\notin\mathcal{G}_{>0}(\states_1)$. If $g_i\in\mathcal{G}_{>0}(\states_1)$, then $h_i\in\mathcal{G}_{>0}(\states_1\times\states_2)$, which, since $\desir$ is a coherent set of desirable gambles on $\states_1\times\states_2$, implies that $h_i\in\desir$. If $g_i\not\in\mathcal{G}_{>0}$, then since $g_i\in\A_{\lp_1}\cup\mathcal{G}_{>0}(\states_1)$, it follows that $g_i\in\A_{\lp_1}$, which implies that there are $(f'_1,B_1)\in\C_1$ and  $\mu<\lp_1(f'_1\vert B_1)$ such that $g_i=[f'_1-\mu]\ind{B_1}$. Furthermore, since $\lp$ coincides with $\lp_1$ on its domain, we also know that $\lp_1(f'_1\vert B_1)=\lp(f'_1\vert B_1)$. If $B_2=\states_2$, Equation~\eqref{eq:AfromLP} therefore implies that $h_i\in\A_{\lp}\subseteq\desir$ because $\ind{B_2}=1$. If $B_2\neq\states_2$, then $B_2\in\B_2$. Since $\lp$ is epistemically independent, this implies that $\lp(f'_1\vert B_1)=\lp(f'_1\vert B_1\cap B_2)$. Hence, here too, Equation~\eqref{eq:AfromLP} implies that $h_i\in\A_{\lp}\subseteq\desir$---because $\ind{B_1\cap B_2}=\ind{B_1}\ind{B_2}$.

In summary then, we have found that $h_i\in\desir$ for all $i\in\{1,\dots,n\}$. Since $f_1=\sum_{i=1}^n\lambda_ig_i$, this implies that
\vspace{-5pt}
\begin{equation*}
f_1(X_1)\ind{B_2}(X_2)
=
\left(\sum_{i=1}^n
\lambda_ig_i(X_1)
\right)
\ind{B_2}(X_2)
=
\sum_{i=1}^n
\lambda_i
g_i(X_1)
\ind{B_2}(X_2)
=
\sum_{i=1}^n
\lambda_i
h_i(X_1,X_2)
\in\desir,
\vspace{4pt}
\end{equation*}
where the inclusion holds because $\desir$ is coherent. Since this is true for every $f_1\in\desir_1$ and every $B_2\in\B_2\cup\{\states_2\}$, it follows that $\A_{2\to1}\subseteq\desir$. 
Using a completely analogous argument, it also follows that $\A_{1\to2}\subseteq\desir$, with $\A_{1\to2}$ defined by Equation~\eqref{eq:A12s} for $\desir_2\coloneqq\E(\lp_2)$.
Hence, we find that $\A_{1\to2}
\cup
\A_{2\to1}
\subseteq\desir$, and therefore,  that 
\begin{equation*}
\E(\lp_1)\otimes\E(\lp_2)
=
\desir_1\otimes\desir_2
=
\E
\left(
\A_{1\to2}
\cup
\A_{2\to1}
\right)
\subseteq\E(\desir)=\desir, 
\end{equation*}
where the second equality follows from Equation~\eqref{eq:indnatext:SDG}, the inclusion follows from Lemma~\ref{lemma:nestedpropsofposandE}, and the last equality follows from Lemma~\ref{lemma:natextDisD}.
\end{proof}
\vspace{-10pt}

\subsection{Proofs and Additional Material for Section~\ref{sec:choiceofevents}}
\vspace{5pt}

\begin{proof}{\bf of Proposition~\ref{prop:addfiniteunionstoB}~}
We only prove the result for $\desir_1\otimes\desir_2$. The result for $\lp_1\otimes\lp_2$ then follows trivially from Equation~\eqref{eq:indnatex:LP}.

Let $\desir_1\otimes\desir_2$ be the independent natural extension that corresponds to $\B_1$ and $\B_2$, as defined by Equations~\eqref{eq:indnatext:SDG}--\eqref{eq:A21s}, and let $\desir_1\otimes'\desir_2$ be the independent natural extension that corresponds to $\B'_1$ and $\B'_2$, defined by
\begin{equation*}
\desir_1\otimes'\desir_2
\coloneqq
\E
\left(
\A'_{1\to2}
\cup
\A'_{2\to1}
\right),
\vspace{-4pt}
\end{equation*}
with
\begin{equation*}
\A'_{1\to2}
\coloneqq
\left\{
f_2(X_2)\ind{B'_1}(X_1)
\colon
f_2\in\desir_2, 
B'_1\in\B'_1\cup\{\states_1\}
\right\}
\vspace{-3pt}
\end{equation*}
and
\begin{equation*}
\A'_{2\to1}
\coloneqq
\left\{
f_1(X_1)\ind{B'_2}(X_2)
\colon
f_1\in\desir_1, 
B'_2\in\B'_2\cup\{\states_2\}
\right\}.
\vspace{8pt}
\end{equation*}
Then as explained in the main text, in the paragraph that precedes Proposition~\ref{prop:addfiniteunionstoB}, we have that $\desir_1\otimes\desir_2\subseteq\desir_1\otimes'\desir_2$. It remains to prove that $\desir_1\otimes'\desir_2\subseteq\desir_1\otimes\desir_2$. 

Fix any $f_2\in\desir_2$ and $B'_1\in\B'_1\cup\{\states_1\}$. We will prove that $f_2(X_2)\ind{B'_1}(X_1)\in\desir_1\otimes\desir_2$. If $B'_1=\states_1$, this follows trivially from Equations~\eqref{eq:indnatext:SDG} and~\eqref{eq:A12s}. Otherwise, it follows from our assumptions that there is some $m\in\nats$ and, for all $k\in\{1,\dots,m\}$, some $B_{1,k}\in\B_1$ such that $B'_1$ is a finite disjoint union of the events $\{B_{1,k}\}_{1\leq k\leq m}$, which implies that $\ind{B'_1}=\sum_{k=1}^m\ind{B_{1,k}}$ and therefore also that $f_2(X_2)\ind{B'_1}(X_1)=\sum_{k=1}^mf_2(X_2)\ind{B_1}(X_1)$. Hence, Equations~\eqref{eq:indnatext:SDG} and~\eqref{eq:A12s} again imply that $f_2(X_2)\ind{B'_1}(X_1)\in\desir_1\otimes\desir_2$. Since this is true for every $f_2\in\desir_2$ and $B'_1\in\B'_1\cup\{\states_1\}$, it follows that $\A'_{1\to2}\subseteq\desir_1\otimes\desir_2$. Using a completely analogous argument, we also infer that $\A'_{2\to1}\subseteq\desir_1\otimes\desir_2$. The result now follows because $\A'_{1\to2}
\cup
\A'_{2\to1}\subseteq\desir_1\otimes\desir_2$ implies that
\begin{equation*}
\desir_1\otimes'\desir_2
=
\E
\left(
\A'_{1\to2}
\cup
\A'_{2\to1}
\right)
\subseteq
\E\left(
\desir_1\otimes\desir_2
\right)
=
\desir_1\otimes\desir_2,
\end{equation*}
using Lemma~\ref{lemma:nestedpropsofposandE} for the inclusion and Lemma~\ref{lemma:natextDisD} and Proposition~\ref{prop:productcoherent:SDG} for the last equality.
\end{proof}
\vspace{-10pt}

\subsection{Proofs and Additional Material for Section~\ref{sec:factadd}}
\vspace{5pt}

\begin{lemma}\label{lemma:fact-add-simple-geq}
For any $f\in\gambleson{1}$ and $h\in\gambleson{2}$ and any simple $\B_1$-measurable $g\in\mathcal{G}_{\geq0}(\states_1)$, we have that
\begin{equation*}
(\lp_1\otimes\lp_2)(f+gh)
\geq\natexLP_1\big(f+g\natexLP_2(h)\big).
\vspace{8pt}
\end{equation*}
\end{lemma}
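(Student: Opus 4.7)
The overall strategy is to establish, via Equation~\eqref{eq:indnatex:LP}, that $f + gh - \mu$ lies in the coherent set $\desir \coloneqq \E(\lp_1) \otimes \E(\lp_2)$ for every $\mu$ strictly below $\alpha \coloneqq \natexLP_1(f + g\beta)$, where $\beta \coloneqq \natexLP_2(h)$. Since Equations~\eqref{eq:indnatex:LP} and~\eqref{eq:LPfromD} give $(\lp_1\otimes\lp_2)(f+gh) = \sup\{\mu \in \reals \colon f+gh-\mu\in\desir\}$, this implies the claim. The case $g = 0$ is trivial via Proposition~\ref{prop:independent:localnatex}, so I assume $g \neq 0$, whence $\sup g > 0$. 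For fixed $\mu < \alpha$, I pick $\mu' \in (\mu, \alpha)$ and set $\delta \coloneqq \mu' - \mu > 0$ and $\epsilon \coloneqq \delta/(1+\sup g) > 0$.

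I then decompose $f + gh - \mu$ as a sum of three pieces, each placed in $\desir$ by a different mechanism. The first piece, $(f + g\beta - \mu')(X_1)$, lies in $\E(\lp_1)$ because $\mu' < \alpha$ (by Equations~\eqref{eq:naturalextension} and~\eqref{eq:LPfromD}), and its cylindrical extension therefore lies in $\A_{2\to 1} \subseteq \desir$ via Equation~\eqref{eq:A21s} applied with $B_2 = \states_2$. The second piece, $g(h - \beta + \epsilon) = gh - g\beta + g\epsilon$, is assembled by writing $g = c_0 + \sum_{i=1}^n c_i\ind{B_{1,i}}$ as in Definition~\ref{def:measurable:simple} and setting $B_{1,0} \coloneqq \states_1$; strict positivity of $\epsilon$ gives $h - \beta + \epsilon \in \E(\lp_2)$, so for each $i$ with $c_i > 0$ the gamble $c_i(h-\beta+\epsilon)(X_2)\ind{B_{1,i}}(X_1)$ lies in $\A_{1\to 2} \subseteq \desir$ by Equation~\eqref{eq:A12s}, and summing these contributions (at least one is present because $g \neq 0$) via~\ref{def:SDG:convex} places $g(h-\beta+\epsilon)$ in $\desir$. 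Adding the first two pieces via~\ref{def:SDG:convex} yields $f + gh + g\epsilon - \mu' \in \desir$. The third piece, $\delta - g\epsilon$, is strictly positive because $g\epsilon \leq \epsilon\sup g = \delta\sup g/(1+\sup g) < \delta$, so it belongs to $\desir$ by~\ref{def:SDG:partialgain}; one final application of~\ref{def:SDG:convex} gives $f + gh - \mu = (f + gh + g\epsilon - \mu') + (\delta - g\epsilon) \in \desir$, as required.

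The main obstacle is not technical depth but careful bookkeeping. I must ensure that the $\B_1$-measurability of $g$ in the simple sense (which forces each $B_{1,i} \in \B_1$) is exactly what makes Equation~\eqref{eq:A12s} applicable to every summand of $g$, and I must choose $\epsilon$ within a narrow window: strictly positive so that $h - \beta + \epsilon \in \E(\lp_2)$, yet small enough that the surplus $g\epsilon$ is absorbed into $\delta$. Handling the degenerate cases $g = 0$ and $c_i = 0$ (for some or all $i$) also requires attention so as not to invoke Equation~\eqref{eq:A12s} with a zero coefficient.
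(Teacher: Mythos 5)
Your proof is correct, but it takes a genuinely different route from the paper's. You work directly in the desirability framework: you show that $f+gh-\mu$ belongs to $\desir=\E(\lp_1)\otimes\E(\lp_2)$ for every $\mu$ below the right-hand side, by exhibiting an explicit conic combination of a gamble from $\A_{2\to1}$ (the piece $f+g\natexLP_2(h)-\mu'$, cylindrically extended with $B_2=\states_2$), gambles from $\A_{1\to2}$ (the pieces $c_i[h-\natexLP_2(h)+\epsilon]\ind{B_{1,i}}$, which is exactly where the simple $\B_1$-measurability of $g$ enters), and a strictly positive remainder $\delta-g\epsilon$ absorbed via~\ref{def:SDG:partialgain}; your $\epsilon$-bookkeeping ($\epsilon=\delta/(1+\sup g)$, so that $g\epsilon<\delta$) is sound, and the degenerate cases are handled properly. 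The paper instead argues entirely at the level of lower previsions: it writes $f+gh=f+g\natexLP_2(h)+(c_0+\sum_i c_i\ind{B_i})[h-\natexLP_2(h)]$, splits this with superadditivity~\ref{def:lowerprev:superadditive} and non-negative homogeneity~\ref{def:lowerprev:homo} (available by Propositions~\ref{prop:productcoherent:LP} and~\ref{prop:propertiesofLP}), and then kills each correction term using the generalised Bayes rule~\ref{def:lowerprev:GBR}, constant additivity~\ref{def:lowerprev:constantadditivity} and Proposition~\ref{prop:independent:localnatex}. The paper's argument is shorter because it reuses the coherence axioms and the marginalisation result as black boxes; yours is more self-contained and elementary in that it goes back to the generating set of $\desir_1\otimes\desir_2$ and in effect re-derives by hand the instances of the Bayes rule and marginal coincidence that the paper imports, at the cost of the $\mu'$, $\delta$, $\epsilon$ machinery needed to turn strict inequalities on suprema into membership statements.
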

\begin{proof}{\bf of Lemma~\ref{lemma:fact-add-simple-geq}~}
Since $g\in\mathcal{G}_{\geq0}(\states_1)$ is a simple $\B$-measurable gamble, we know from Definition~\ref{def:measurable:simple} that there are $c_0\in\reals_{\geq0}$, $n\in\natswith$ and, for all $i\in\{1,\dots,n\}$, $c_i\in\reals_{\geq0}$ and $B_i\in\B_1$, such that $g=c_0+\sum_{i=1}^nc_i\ind{B_i}$. Furthermore, since we know from Proposition~\ref{prop:productcoherent:LP} that $\lp_1\otimes\lp_2$ is coherent, it follows from Proposition~\ref{prop:propertiesofLP} that $\lp_1\otimes\lp_2$ satisfies~\ref{def:lowerprev:homo},~\ref{def:lowerprev:superadditive} and~\ref{def:lowerprev:GBR}. Finally, since $\natexLP_2$ is coherent, we know from Proposition~\ref{prop:propertiesofLP} that it satisfies~\ref{def:lowerprev:constantadditivity}. Therefore, we find that
\begin{align*}
(\lp_1\otimes\lp_2)&(f+gh)
=
(\lp_1\otimes\lp_2)\Big(
f+g\natexLP_2(h)+\big(c_0+\sum_{i=1}^nc_i\ind{B_i}\big)[h-\natexLP_2(h)]
\Big)\\
&\geq
(\lp_1\otimes\lp_2)\big(
f+g\natexLP_2(h)\big)
+
c_0
(\lp_1\otimes\lp_2)\big(h-\natexLP_2(h)
\big)
+
\sum_{i=1}^nc_i
(\lp_1\otimes\lp_2)\big(\ind{B_i}[h-\natexLP_2(h)]
\big)\\
&=
\natexLP_1\big(
f+g\natexLP_2(h)\big)
+
c_0
\natexLP_2\big(h-\natexLP_2(h)
\big)
+
\sum_{i=1}^nc_i
(\lp_1\otimes\lp_2)\big(\ind{B_i}[h-(\lp_1\otimes\lp_2)(h\vert B_i)]
\big)\\
&=
\natexLP_1\big(
f+g\natexLP_2(h)\big)
+
c_0
\big(\natexLP_2(h)-\natexLP_2(h)
\big)
=\natexLP_1\big(
f+g\natexLP_2(h)\big),\\[-8pt]
\end{align*}
where the first equality follows because $g=c_0+\sum_{i=1}^nc_i\ind{B_i}$, where the first inequality follows because $\lp_1\otimes\lp_2$ satisfies~\ref{def:lowerprev:superadditive} and~\ref{def:lowerprev:homo}, where the second equality follows from Proposition~\ref{prop:independent:localnatex}, and where the third equality follows because $\natexLP_2$ satisfies~\ref{def:lowerprev:constantadditivity} and $\lp_1\otimes\lp_2$ satisfies~\ref{def:lowerprev:GBR}.
\end{proof}
\vspace{-16pt}

\begin{lemma}\label{lemma:fact-add-simple-leq}
For any $f\in\gambleson{1}$ and $h\in\gambleson{2}$ and any simple $\B_1$-measurable $g\in\mathcal{G}_{\geq0}(\states_1)$, we have that
\begin{equation*}
(\lp_1\otimes\lp_2)(f+gh)
\leq\natexLP_1\big(f+g\natexLP_2(h)\big).\vspace{8pt}
\end{equation*}
\end{lemma}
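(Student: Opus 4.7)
The plan is to construct an appropriate product linear prevision that bounds $(\lp_1\otimes\lp_2)(f+gh)$ from above by $\natexLP_1(f+g\natexLP_2(h))$. First, since $\natexLP_1$ and $\natexLP_2$ are coherent by Proposition~\ref{prop:naturalextension:full}, I would invoke Proposition~\ref{prop:lowerenvelope} to select conditional linear previsions $P_1^*$ on $\C(\states_1)$ dominating $\natexLP_1$ with $P_1^*(f+g\natexLP_2(h))=\natexLP_1(f+g\natexLP_2(h))$, and $P_2^*$ on $\C(\states_2)$ dominating $\natexLP_2$ with $P_2^*(h)=\natexLP_2(h)$; the attained-minimum variant of Proposition~\ref{prop:lowerenvelope} ensures that both choices are available. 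I would then define a product linear prevision $Q$ on $\mathcal{G}(\states_1\times\states_2)$ by iterated integration, $Q(F)\coloneqq P_1^*(x_1\mapsto P_2^*(F(x_1,\cdot)))$. Boundedness of gambles via~\ref{def:lowerprev:bounded} makes the inner integrand a gamble on $\states_1$, so $Q$ is well-defined; linearity and monotonicity of $Q$ then transfer from $P_1^*$ and $P_2^*$. In particular, $Q$ factorises on cylindrical products, giving $Q(\ind{B}(X_1)f_2(X_2))=P_1^*(B)P_2^*(f_2)$ and $Q(\ind{B}(X_2)f_1(X_1))=P_2^*(B)P_1^*(f_1)$, and coincides with $P_i^*$ on gambles depending only on $X_i$.

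With $Q$ in hand, fix any $\mu<(\lp_1\otimes\lp_2)(f+gh)$. Then Equation~\eqref{eq:indnatex:LP} together with Equation~\eqref{eq:LPfromD} yields $f+gh-\mu\in\E(\lp_1)\otimes\E(\lp_2)$, which by Equations~\eqref{eq:indnatext:SDG}--\eqref{eq:A21s} and~\eqref{eq:posi}--\eqref{eq:natextop} unfolds into a finite decomposition
\[
f+gh-\mu=\sum_k\lambda_k\ind{B_{1,k}}(X_1)f_{2,k}(X_2)+\sum_l\eta_l\ind{B_{2,l}}(X_2)f_{1,l}(X_1)+\sum_m\gamma_m p_m,
\]
with non-negative coefficients, $f_{2,k}\in\E(\lp_2)$, $f_{1,l}\in\E(\lp_1)$, $B_{1,k}\in\B_1\cup\{\states_1\}$, $B_{2,l}\in\B_2\cup\{\states_2\}$, and $p_m\in\mathcal{G}_{>0}(\states_1\times\states_2)$. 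Applying $Q$ to both sides, every term on the right is non-negative: $Q(p_m)\geq0$ by~\ref{def:lowerprev:bounded}, and $P_2^*(f_{2,k})\geq\natexLP_2(f_{2,k})\geq0$ since $f_{2,k}\in\E(\lp_2)$ forces $\lp_{\E(\lp_2)}(f_{2,k})\geq0$ via Equation~\eqref{eq:LPfromD}, with an analogous argument for $P_1^*(f_{1,l})$. Hence $Q(f+gh)\geq\mu$, and the factorisation property of $Q$ together with linearity of $P_1^*$ gives
\[
Q(f+gh)=P_1^*(f)+P_2^*(h)P_1^*(g)=P_1^*(f+g\natexLP_2(h))=\natexLP_1(f+g\natexLP_2(h)).
\]
Taking the supremum over $\mu$ completes the proof.

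The main obstacle will be justifying the existence and coherence of $Q$ as a well-defined linear prevision on all of $\mathcal{G}(\states_1\times\states_2)$ with the stated factorisation. This is the Williams-coherence analogue of Fubini for finitely additive measures and follows from routine estimates using boundedness of gambles, but it requires some care because $\sigma$-additivity is unavailable in the present setting; everything else reduces to a direct term-by-term application of $Q$ to the defining decomposition of $\E(\lp_1)\otimes\E(\lp_2)$.
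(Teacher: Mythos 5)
Your proof is correct, but its second half takes a genuinely different route from the paper's. Both arguments begin identically: Proposition~\ref{prop:lowerenvelope} supplies dominating conditional linear previsions $P_1^*\geq\natexLP_1$ and $P_2^*\geq\natexLP_2$ attaining the relevant values. From there the paper never constructs a product functional; it observes that $\E(\lp_i)\subseteq\E(P_i^*)$ forces $\lp_1\otimes\lp_2\leq P_1^*\otimes P_2^*$, bounds $(P_1^*\otimes P_2^*)(f+gh)\leq-(P_1^*\otimes P_2^*)(-f-gh)$ via~\ref{def:lowerprev:lowerbelowupper}, and then feeds $-f$ and $-h$ into the already-proved Lemma~\ref{lemma:fact-add-simple-geq} for the pair $(P_1^*,P_2^*)$, where self-conjugacy collapses everything to $\natexLP_1\big(f+g\natexLP_2(h)\big)$. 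You instead build the iterated product $Q=P_1^*\circ P_2^*$ and apply it term by term to the posi-decomposition of $f+gh-\mu$ in $\E(\lp_1)\otimes\E(\lp_2)$; each verification you need ($Q(p_m)\geq0$, $P_i^*$ non-negative on $\E(\lp_i)$ via Equation~\eqref{eq:LPfromD}, the cylinder factorisation) does go through. Two remarks. First, the obstacle you flag at the end is not actually one: you only ever use a single order of iteration and only the unconditional parts of $P_1^*$ and $P_2^*$, so no Fubini-type interchange arises; well-definedness, linearity, monotonicity and factorisation on cylinders follow directly from \ref{def:prev:bounded}--\ref{def:prev:additive} and boundedness of gambles. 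Second, your argument nowhere uses that $g$ is simple or $\B_1$-measurable, so it in fact establishes the inequality for arbitrary $g\in\mathcal{G}_{\geq0}(\states_1)$; measurability is only needed for the converse inequality of Lemma~\ref{lemma:fact-add-simple-geq}. The trade-off is that the paper's route is shorter because it recycles Lemma~\ref{lemma:fact-add-simple-geq} and the coherence machinery already in place, whereas yours is more self-contained and slightly more general, at the cost of constructing and verifying $Q$.
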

\begin{proof}{\bf of Lemma~\ref{lemma:fact-add-simple-leq}~}
Since $\natexLP_2$ is a coherent conditional lower prevision on $\C(\states_2)$, we know from Proposition~\ref{prop:lowerenvelope} that there is a conditional linear prevision $P_2$ on $\C(\states_2)$ such that $P_2(h)=\natexLP_2(h)$ and $P_2\geq\natexLP_2$. Similarly, since $\natexLP_1$ is a coherent conditional lower prevision on $\C(\states_1)$, we know from Proposition~\ref{prop:lowerenvelope} that there is a conditional linear prevision $P_1$ on $\C(\states_1)$ such that $P_1\big(f+g\natexLP_2(h)\big)=\natexLP_1\big(f+g\natexLP_2(h)\big)$ and $P_1\geq\natexLP_1$.

Consider now any $i\in\{1,2\}$. We then know from Proposition~\ref{prop:naturalextension} that $\natexLP_i$ coincides with $\lp_i$ on $\C_i$. Therefore, and because $P_i\geq\natexLP_i$, we also know that $P_i$ dominates $\lp_i$ on $\C_i$. Due to Equation~\eqref{eq:AfromLP}, this implies that $\A_{\lp_i}\subseteq\A_{P_i}$ and therefore, using Lemma~\ref{lemma:nestedpropsofposandE}, also that $\E(\lp_i)\subseteq\E(P_i)$. 
Since this is true for every $i\in\{1,2\}$, it follows from Equation~\eqref{eq:indnatext:SDG} and Lemma~\ref{lemma:nestedpropsofposandE} that $\E(\lp_1)\otimes\E(\lp_2)\subseteq\E(P_1)\otimes\E(P_2)$, and therefore, because of Equation~\eqref{eq:indnatex:LP}, that $\lp_1\otimes\lp_2\leq P_1\otimes P_2$. 

The result can now be proved as follows. First, since $\lp_1\otimes\lp_2\leq P_1\otimes P_2$, we find that 
\begin{equation}\label{eq:lemma:fact-add-simple-leq:1}
(\lp_1\otimes\lp_2)(f+gh)
\leq
(P_1\otimes P_2)(f+gh).
\end{equation}
Secondly, since we know from Proposition~\ref{prop:productcoherent:LP} that $(P_1\otimes P_2)$ is coherent, it follows from Proposition~\ref{prop:propertiesofLP} that $(P_1\otimes P_2)$ satisfies~\ref{def:lowerprev:lowerbelowupper}, which implies that
\begin{equation}\label{eq:lemma:fact-add-simple-leq:2}
(P_1\otimes P_2)(f+gh)
\leq
-(P_1\otimes P_2)(-f-gh)
\leq
-P_1\big(-f+gP_2(-h)\big),
\end{equation}
using Lemma~\ref{lemma:fact-add-simple-geq} for the second inequality.
Finally, we also know that
\begin{equation}\label{eq:lemma:fact-add-simple-leq:3}
-P_1\big(-f+gP_2(-h)\big)
=
P_1\big(f+gP_2(h)\big)
=
\natexLP_1\big(f+g\natexLP_2(h)\big),
\end{equation}
where the first equality follows from Definitions~\ref{def:prev} and~\ref{def:linearprev} because $P_1$ and $P_2$ are conditional linear previsions, and where the second equality follows because $P_2(h)=\natexLP_2(h)$ and $P_1\big(f+g\natexLP_2(h)\big)=\natexLP_1\big(f+g\natexLP_2(h)\big)$.
By combining Equations~\eqref{eq:lemma:fact-add-simple-leq:1}--\eqref{eq:lemma:fact-add-simple-leq:3}, the result is now immediate.
\end{proof}
\vspace{-16pt}

\begin{proposition}\label{prop:fact-add-simple}
For any $f\in\gambleson{1}$ and $h\in\gambleson{2}$ and any simple $\B_1$-measurable $g\in\mathcal{G}_{\geq0}(\states_1)$, we have that
\begin{equation*}
(\lp_1\otimes\lp_2)(f+gh)
=\natexLP_1\big(f+g\natexLP_2(h)\big).
\vspace{5pt}
\end{equation*}
\end{proposition}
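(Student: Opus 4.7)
My plan is extremely short, because the two preceding lemmas already do all of the work. Specifically, Lemma~\ref{lemma:fact-add-simple-geq} establishes the inequality
\begin{equation*}
(\lp_1\otimes\lp_2)(f+gh) \geq \natexLP_1\big(f+g\natexLP_2(h)\big),
\end{equation*}
and Lemma~\ref{lemma:fact-add-simple-leq} establishes the reverse inequality
\begin{equation*}
(\lp_1\otimes\lp_2)(f+gh) \leq \natexLP_1\big(f+g\natexLP_2(h)\big),
\end{equation*}
under exactly the hypotheses of the proposition.

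So the only thing I need to do is invoke both lemmas and combine them. The proof is a one-liner: fix arbitrary $f\in\gambleson{1}$, $h\in\gambleson{2}$, and simple $\B_1$-measurable $g\in\mathcal{G}_{\geq0}(\states_1)$; then Lemmas~\ref{lemma:fact-add-simple-geq} and~\ref{lemma:fact-add-simple-leq} together give the required equality.

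There is no genuine obstacle here, since both inequalities have already been done separately. The real work lived in the two lemmas: Lemma~\ref{lemma:fact-add-simple-geq} uses the superadditivity, homogeneity, and generalised Bayes rule properties of $\lp_1\otimes\lp_2$ together with Proposition~\ref{prop:independent:localnatex} to move $\natexLP_2(h)$ inside; Lemma~\ref{lemma:fact-add-simple-leq} uses a linear-prevision dominance argument (via Proposition~\ref{prop:lowerenvelope}), the observation that $\lp_1\otimes\lp_2 \leq P_1\otimes P_2$ whenever $P_i\geq\natexLP_i$, self-conjugacy of linear previsions, and then invokes Lemma~\ref{lemma:fact-add-simple-geq} applied to $-f$ and $-h$. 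Stitching these together is purely formal, so the proposed proof is simply: \emph{Immediate consequence of Lemmas~\ref{lemma:fact-add-simple-geq} and~\ref{lemma:fact-add-simple-leq}.}
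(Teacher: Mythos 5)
Your proposal is correct and coincides exactly with the paper's own proof, which also reads ``Immediate consequence of Lemmas~\ref{lemma:fact-add-simple-geq} and~\ref{lemma:fact-add-simple-leq}.'' The two lemmas give the two inequalities under precisely the hypotheses of the proposition, so combining them is all that is needed.
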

\begin{proof}{\bf of Proposition~\ref{prop:fact-add-simple}~}
Immediate consequence of Lemmas~\ref{lemma:fact-add-simple-geq} and~\ref{lemma:fact-add-simple-leq}.
\end{proof}
\vspace{-7pt}

\begin{proof}{\bf of Theorem~\ref{theo:fact-add-measurable}~}
Since $g\in\mathcal{G}_{\geq0}(\states_1)$ is $\B_1$-measurable, we know from Definition~\ref{def:measurable:uniform} that there is a sequence $\{g_n\}_{n\in\nats}$ of simple $\B_1$-measurable gambles in $\mathcal{G}_{\geq0}(\states_1)$ such that $g_n$ converges uniformly to $g$.
This also implies that $f+g_n\natexLP_2(h)$ converges uniformly to $f+g\natexLP_2(h)$ and, since $h$ is a gamble and therefore by definition bounded, that $f+g_nh$ converges uniformly to $f+gh$. The result now follows from the following series of equalities:
\begin{equation*}
(\lp_1\otimes\lp_2)(f+gh)
=
\lim_{n\to+\infty}
(\lp_1\otimes\lp_2)(f+g_nh)
=
\lim_{n\to+\infty}
\natexLP_1\big(f+g_n\natexLP_2(h)\big)
=
\natexLP_1\big(f+g\natexLP_2(h)\big).
\end{equation*}
The first of these equalities holds because it follows from Propositions~\ref{prop:productcoherent:LP} and~\ref{prop:propertiesofLP} that $\lp_1\otimes\lp_2$ satisfies~\ref{def:lowerprev:uniformcontinuity}. The second equality follows from Proposition~\ref{prop:fact-add-simple}. The third equality holds because the coherence of $\natexLP_1$ allows us to infer from Proposition~\ref{prop:propertiesofLP} that $\natexLP_1$ satisfies~\ref{def:lowerprev:uniformcontinuity}.
\end{proof}

\begin{proof}{\bf of Corollary~\ref{corol:fact-measurable}~}
Let $f\coloneqq0\in\mathcal{G}(\states_i)$. We then know from Theorem~\ref{theo:fact-add-measurable} that 
\begin{equation*}
(\lp_1\otimes\lp_2)(gh)
=(\lp_1\otimes\lp_2)(f+gh)
=\natexLP_i\big(f+g\natexLP_j(h)\big)
=\natexLP_i\big(g\natexLP_j(h)\big).
\end{equation*}
The result can now be inferred from the non-negative homogeneity---\ref{def:lowerprev:homo}---of $\natexLP_i$ that is implied by its coherence. If $\natexLP_j(h)\geq0$, we simply apply the non-negative homogeneity for $\lambda\coloneqq\natexLP_j(h)$. If $\natexLP_j(h)\leq0$, we apply it for $\lambda\coloneqq-\natexLP_j(h)$ and combine this with the fact that $\natexUP_i(g)\coloneqq-\natexLP_i(-g)$.
\end{proof}

\begin{proof}{\bf of Corollary~\ref{corol:add}~}
Let $g\coloneqq1$. Then $g$ belongs to $\mathcal{G}_{\geq0}(\states_1)$ and is $\B_1$-measurable. Therefore, we know from Theorem~\ref{theo:fact-add-measurable} that
\begin{equation*}
(\lp_1\otimes\lp_2)(f+h)
=(\lp_1\otimes\lp_2)(f+gh)
=\natexLP_1\big(f+g\natexLP_2(h)\big)
=\natexLP_1\big(f+\natexLP_2(h)\big).
\end{equation*}
The result now follows from the constant additivity---\ref{def:lowerprev:constantadditivity}---of $\natexLP_1$ that is implied by its coherence.
\end{proof}

\end{document}